\newtheorem{theo}{Theorem}[section]
\newtheorem{lemm}[theo]{Lemma}
\newtheorem{corr}[theo]{Corollary}
\newtheorem{prop}[theo]{Proposition}
\newtheorem{conj}[theo]{Conjecture}
\numberwithin{equation}{section}
\theoremstyle{definition}
\newtheorem{defi}[theo]{Definition}
\newtheorem{exam}[theo]{Example}
\newtheorem{rema}[theo]{Remark}
\newtheorem{assu}[theo]{Assumption}
\newtheorem{claim}[theo]{Claim}
\newcommand{\can}{{\rm{can}}}
\newcommand{\rk}{{\rm{rk}}}
\newcommand{\Supp}{{\rm{Supp}}}
\newcommand{\Amp}{{\rm{Amp}}}
\newcommand{\sing}{{\rm{sing}}}
\newcommand{\Sing}{{\rm{Sing}}}
\newcommand{\Exc}{{\rm{Exc}}}
\newcommand{\reg}{{\rm{reg}}}
\newcommand{\Null}{{\rm{Null}}}
\newcommand{\codim}{{\rm{codim}}}
\newcommand{\Id}{{\rm{Id}}}
\newcommand{\Ker}{{\rm{Ker}}}
\newcommand{\Mov}{{\rm{Mov}}}
\begin{document}
\title{Slope Stable Sheaves and Hermitian-Einstein Metrics on Normal Varieties with Big Cohomology Classes}
\author{Satoshi Jinnouchi}
\date{\empty}
\maketitle
\begin{abstract}
\small{In this paper, we introduce the notions of slope stability and the Hermitian-Einstein metric for big cohomology classes. Our main result is the Kobayashi-Hitchin correspondence on compact normal spaces with big classes admitting the birational Zariski decomposition with semiample positive part. We also prove the Bogomolov-Gieseker inequality for slope stable sheaves with respect to nef and big classes. Through this paper, the ``bimeromorphic invariance'' of slope stability and the existence of Hermitian-Einstein metrics play an essential role.}
\end{abstract}

\section{Introduction, Main Result}
\subsection{Introduction}
This paper focuses on extending the Kobayashi-Hitchin (hereinafter abbreviated as KH) correspondence by generalizing the slope stability and the notion of Hermitian-Einstein (denoted as HE simply) metrics from compact Kähler manifolds to more general settings involving big cohomology classes on compact normal complex varieties.

The results build on fundamental works in complex and algebraic geometry, employing tools such as non-pluripolar products for closed positive (1,1)-currents.
The KH correspondence, originally established by Donaldson \cite{Don3} and Uhlenbeck-Yau \cite{UY} for compact Kähler manifolds, asserts that a holomorphic vector bundle over a compact Kähler manifold is slope polystable if and only if it admits a Hermitian-Einstein metric.
Bando and Siu extended the KH correspondence to reflexive sheaves by defining the Hermitian-Einstein metric on a Zariski open set that satisfies the admissible condition \cite{BS}.
More recently, Xuemiao Chen showed that the KH correspondence holds for compact K\"{a}hler normal varieties \cite{Chen}.

Building on these developments, this paper extends the notions of slope stability and Hermitian-Einstein metrics from Kähler classes to big classes, establishing their bimeromorphic invariance through the use of non-pluripolar products.
Furthermore, we prove the KH correspondence on both normal projective varieties of general type with log terminal singularities.
This approach leverages recent advancements by Boucksom, Eyssidieux, Guedj, and Zeriahi, who showed that the Monge-Ampère equation can be solved on compact complex manifolds with a big cohomology class \cite{BEGZ}.

These extensions not only broaden the applicability of the KH correspondence but also connect it with recent developments in pluripotential theory and birational geometry, potentially enriching our understanding of vector bundles and stability conditions on normal varieties.

\subsection{Main Result}
The Kobayashi-Hitchin correspondence is the equivalence between the slope stability and the existence of Hermitian-Einstein metrics on holomorphic vector bundles (more generally reflexive sheaves). On compact K\"{a}her manifolds, this correspondence is well-studied and proved by  \cite{Don3}, \cite{UY}, \cite{BS}, \cite{Chen}. 

In this paper, we extend the notions of slope stability, Hermitian-Einstein metrics and the Kobayashi-Hichin correspondence from K\"{a}hler classes to big classes.
Let $X$ be a compact complex manifold of dimension $n$, $\alpha$ be a big class on $X$ 
and $\mathcal{E}$ be a reflexive sheaf on $X$. 

\begin{defi}[= Definition \ref{stability defi}, Definition \ref{stability defi2}]
A reflexive sheaf $\mathcal{E}$ is $\langle\alpha^{n-1}\rangle$-slope stable if for any reflexive subsheaf $0\neq \mathcal{F}\subsetneq \mathcal{E}$, the inequality $\mu_{\alpha}(\mathcal{F})<\mu_{\alpha}(\mathcal{E})$ holds, where
$$
\mu_{\alpha}(\mathcal{E})=\frac{1}{\rk(\mathcal{E})}\int_Xc_1(\det\mathcal{E})\wedge\frac{\langle\alpha^{n-1}\rangle}{(n-1)!}.
$$
\end{defi}
Here $\langle\alpha^{n-1}\rangle$ is the positive product defined by Boucksom-Eyssidieux-Guedji-Zeriahi \cite{BEGZ} and Boucksom-Demailly-P\u{a}un-Peternell \cite{BDPP}. 
Since $\alpha$ is big, there is a closed positive $(1,1)$-current $T$ in $\alpha$ which is smooth K\"{a}hler on $\Amp(\alpha)$ the ample locus of $\alpha$. Set $\Omega:=(X\setminus\Sing(\mathcal{E}))\cap\Amp(\alpha)$.
\begin{defi}[= Definition \ref{THE defi}]
A $T$-Hermitian-Einstein metric is a smooth hermitian metric $h$ on $\mathcal{E}|_{\Omega}$ which satisfies
\begin{itemize}
\item $\sqrt{-1}\Lambda_TF_h=\lambda\cdot \Id$ on $\Omega$,
\item $\int_{\Omega}|F_h|^2_TT^n<\infty,$
\item $\lambda=\frac{n}{\int_X\langle\alpha^n\rangle}\int_Xc_1(\det\mathcal{E})\wedge\langle\alpha^{n-1}\rangle.$
\end{itemize}
\end{defi}
\noindent We will generalize these notions to compact normal spaces (see Definition \ref{stability normal defi}, Definition \ref{THE normal}).

Since the bigness of a cohomology class is bimeromorphic invariant, it is natural to expect that both $\langle\alpha^{n-1}\rangle$-slope stability and the existence of $T$-Hermitian-Einstein metrics are also bimeromorphic invariant. In order to establish the invariance, we impose the following assumption in this paper:
\begin{assu}[= Assumption \ref{assumption}, see also section \ref{Assumption}]\label{assumption intro}
Let $\pi:Y\to X$ be a bimeromorphic morphism between compact K\"{a}hler manifolds and $\alpha$ be a big class on $X$. Then we assume 
$$
\langle(\pi^*\alpha)^{n-1}\rangle\cdot[D]=0
$$
holds for any $\pi$-exceptional divisor $D$.
\end{assu}
\noindent If $Y$ is projective, Assumption \ref{assumption intro} was proven in \cite{Nystr19}. 
Under the Assumption \ref{assumption intro}, we obtain the bimeromorphic invariance of $\langle\alpha^{n-1}\rangle$-slope stability and the existence of $T$-Hermitian-Einstein metrics:
\begin{theo}[= Theorem \ref{stability bimero}, Theorem \ref{birational HE}]\label{invariance intro}
Let $X$ be a compact normal space, $\alpha$ be a big class on $X$ and $\mathcal{E}$ be a reflexive sheaf on $X$. Let $Y$ be a compact normal space, $\beta$ be a big class on $Y$ and $\mathcal{F}$ be a reflexive sheaf on $Y$. 
 Let $\pi:Y\dashrightarrow X$ be a bimeromorphic map satisfying
\begin{itemize}
\item $\pi_*\beta=\alpha$ and $\pi$ is $\beta$-negative contraction (e.g. a composition of flips and divisorial contractions) %(see Definition \ref{beta-negative def}), 
\item $\pi^{[*]}\mathcal{E}\simeq\mathcal{F}$ away from the $\pi$-exceptional locus.
\end{itemize}
Then we have the followings:
\begin{enumerate}
\item $\mathcal{F}$ is $\langle\beta^{n-1}\rangle$-slope stable if and only if the reflexive sheaf $\mathcal{E}$ is $\langle\alpha^{n-1}\rangle$-slope stable. 
\item The reflexive sheaf $\mathcal{E}$ admits a $T$-HE metric if and only if $\mathcal{F}$ admits a $T'$-HE metric. Here $T\in \alpha$ and $T'\in\beta$ are suitable closed positive $(1,1)$-currents.
\end{enumerate}
If $\alpha$ is nef and big or $X$ and $Y$ are projective, then we do not need Assumption \ref{assumption intro}.
\end{theo}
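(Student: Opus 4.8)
The plan is to reduce the final sentence to the single statement that Assumption \ref{assumption intro} is automatically satisfied in each of the two listed situations; once this is verified, conclusions (1) and (2)—already proven under that assumption in Theorem \ref{stability bimero} and Theorem \ref{birational HE}—apply verbatim. Concretely, I would fix a common resolution $p\colon W\to X$, $q\colon W\to Y$ of the bimeromorphic map $\pi$, with $W$ a compact Kähler manifold, so that $q^*\beta=p^*\alpha+E$ for an effective $q$-exceptional divisor $E$ coming from the $\beta$-negativity of $\pi$. Because the discrepancy between $c_1(\det\mathcal E)$ and $c_1(\det\mathcal F)$ is supported on exceptional divisors, the only input needed is the vanishing $\langle(p^*\alpha)^{n-1}\rangle\cdot[D]=0$ for every $p$-exceptional divisor $D$ on $W$; this is exactly Assumption \ref{assumption intro} for $\alpha$, and the corresponding statement for $\beta$ follows from it together with the $\beta$-negativity relation, since positive products depend only on the positive part of the divisorial Zariski decomposition.

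For the nef and big case I would first observe that $p^*\alpha$ is again nef (nefness is preserved under pullback) and big (bigness is preserved under a bimeromorphic morphism), so $p^*\alpha$ is a nef and big class on the compact Kähler manifold $W$. The crux is that positive products coincide with ordinary cup products on nef classes: approximating $p^*\alpha$ from inside the big cone by the Kähler classes $p^*\alpha+\varepsilon\omega_W$ (with $\omega_W$ a fixed Kähler form), and using the continuity of the positive product on the big cone together with $\langle\gamma^{n-1}\rangle=\gamma^{n-1}$ for Kähler $\gamma$, one obtains $\langle(p^*\alpha)^{n-1}\rangle=(p^*\alpha)^{n-1}=p^*(\alpha^{n-1})$. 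The projection formula then yields
\[
\langle(p^*\alpha)^{n-1}\rangle\cdot[D]=\int_W p^*(\alpha^{n-1})\wedge[D]=\int_X \alpha^{n-1}\wedge p_*[D]=0,
\]
because $D$ is $p$-exceptional, so that $p(D)$ has dimension at most $n-2$ and hence $p_*[D]=0$. This verifies the assumption with no projectivity hypothesis.

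For the projective case I would simply reduce to \cite{Nystr19}: choosing the common resolution $W$ to be projective (possible once both $X$ and $Y$ are projective), the required vanishing $\langle(p^*\alpha)^{n-1}\rangle\cdot[D]=0$ for a big class $\alpha$ and a $p$-exceptional divisor $D$ is precisely the content of that reference, as already recalled in the introduction. With Assumption \ref{assumption intro} established in both cases, the bimeromorphic invariance of $\langle\alpha^{n-1}\rangle$-slope stability and of the existence of $T$-Hermitian–Einstein metrics follows unconditionally, completing the proof of Theorem \ref{invariance intro} in these settings.

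I expect the main obstacle to lie entirely in the nef case rather than the projective one, and within it in two points. First, one must justify the identity $\langle(p^*\alpha)^{n-1}\rangle=(p^*\alpha)^{n-1}$ carefully, since for a general big class the positive product and the cup product genuinely differ by a non-movable (exceptional) contribution; the argument hinges on continuity of positive products on the big cone and on the Kähler approximation being internal to that cone. Second, since only $\alpha$ (and not $\beta$) is assumed nef, one must ensure that the $\beta$-side vanishing is correctly transferred through the relation $q^*\beta=p^*\alpha+E$; here I would lean on the insensitivity of positive products to the negative part of the Zariski decomposition, which is the mechanism already underlying Theorem \ref{stability bimero} and Theorem \ref{birational HE}.
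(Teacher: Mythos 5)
Your treatment of the final sentence is essentially the paper's own: for the nef and big case the paper proves exactly your vanishing in Proposition \ref{nef big singular exceptional} (by restricting a smooth representative of $(\pi^*\alpha)^{n-1}$ to the exceptional divisor, which is your projection-formula computation in different clothing), combined with $\langle\alpha^p\rangle=\alpha^p$ for nef and big classes (Proposition \ref{nef positive product}); for the projective case it likewise just invokes \cite{Nystr19}. Your two flagged worries are also the right ones, and the transfer to the $\beta$-side via $\langle(q^*\beta)^{n-1}\rangle=\langle(p^*\alpha+E)^{n-1}\rangle=\langle(p^*\alpha)^{n-1}\rangle$ (Lemma \ref{exc.div.prod}) together with $\Exc(\text{over }X)\supseteq\Exc(\text{over }Y)$ is exactly the mechanism the paper relies on. So on this portion of the statement you are correct and aligned with the paper.

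The genuine gap is that this portion is only the addendum. The statement you were asked to prove \emph{is} Theorem \ref{stability bimero} and Theorem \ref{birational HE}; declaring that conclusions (1) and (2) are ``already proven'' in those theorems and ``apply verbatim'' is circular in a blind setting, and the actual content of the paper's proof is absent from your proposal. Concretely, for (1) the paper must (a) show that $\langle\alpha^{n-1}\rangle$-stability on a normal space is independent of the chosen resolution (Lemma \ref{stability normal lemma}), and (b) set up the correspondence between destabilizing reflexive subsheaves $\mathcal{G}\subsetneq\mathcal{F}$ and $\widetilde{\mathcal{G}}\subsetneq\mathcal{E}$ via pushforward/pullback followed by saturation, checking ranks are preserved and that $\mu_{f^*\alpha}(\mathcal{G})=\mu_\alpha(\widetilde{\mathcal{G}})$ --- it is only in this last slope identity that the exceptional-divisor vanishing you establish gets used, and the saturation construction is not a formality (it is what makes the comparison of first Chern classes differ by an honest exceptional divisor). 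For (2) the paper must identify $\Amp(q^*\beta)\setminus\Sing(q^{[*]}\mathcal{F})$ with $\Amp(p^*\alpha)\setminus\Sing(p^{[*]}\mathcal{E})$, match the Hermitian--Einstein constants via the same degree computation, and exhibit the current on the $\beta$-side explicitly as $T+[E]$, so that a $T$-HE metric in $q^{[*]}\mathcal{E}$ literally serves as a $(T+[E])$-HE metric in $p^{[*]}\mathcal{F}$ (the contraction operator $\Lambda_T$ only sees the smooth K\"ahler part on the common ample locus). None of these steps appears in your write-up, so as it stands the proposal proves only when Assumption \ref{assumption intro} is superfluous, not the invariance statements themselves.
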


The main goal of this paper is to establish the Kobayashi-Hitchin correspondence on compact complex manifolds with big classes which admits a birational Zariski decomposition with semiample positive part (e.g. a semiample class).
The correspondence is an application of Theorem \ref{invariance intro}. To be more specific, we prove the following result. We do not need Assumption \ref{assumption intro}.
\begin{theo}[= Theorem \ref{KH corr}]\label{KH intro}
Let $X$ be a compact normal space with a big class $\alpha\in H^{1,1}_{BC}(X)$ and $\mathcal{E}$ be a reflexive sheaf on $X$. Suppose that $\alpha$ admits a birational Zariski decomposition with semiample positive part (e.g. $\alpha$ is semiample). Then $\mathcal{E}$ is $\langle\alpha^{n-1}\rangle$-slope stable if and only if $\mathcal{E}$ admits a $T$-Hermitian-Einstein metric with a suitable $T\in\alpha$.
\end{theo}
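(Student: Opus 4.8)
The strategy is to transport both notions, by the bimeromorphic invariance of Theorem~\ref{invariance intro}, onto a model carrying a genuine Kähler class, and then to invoke the Kobayashi-Hitchin correspondence for compact normal Kähler varieties proved by Chen~\cite{Chen}. The birational Zariski decomposition supplies, first, an $\alpha$-negative birational contraction $\psi\colon X\dashrightarrow X'$ onto a compact normal space $X'$ (a composition of flips and divisorial contractions) such that the positive part $\alpha':=\psi_*\alpha$ is semiample and big. Semiampleness then provides a second morphism exhibiting $\alpha'$ as the pullback of a Kähler class, and the whole argument is a chaining of invariance equivalences along these two maps.

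First I would apply Theorem~\ref{invariance intro} to $\psi$, taking $\mathcal{E}':=(\psi_*\mathcal{E})^{**}$ for the reflexive transform of $\mathcal{E}$ on $X'$, which is isomorphic to $\mathcal{E}$ away from the exceptional locus (so that $\psi^{[*]}\mathcal{E}'\simeq\mathcal{E}$ there). Since $\psi$ is an $\alpha$-negative contraction with $\psi_*\alpha=\alpha'$, and since $\alpha'$ is nef and big so that Assumption~\ref{assumption intro} is unnecessary, both parts of Theorem~\ref{invariance intro} apply: $\mathcal{E}$ is $\langle\alpha^{n-1}\rangle$-slope stable if and only if $\mathcal{E}'$ is $\langle(\alpha')^{n-1}\rangle$-slope stable, and $\mathcal{E}$ admits a $T$-Hermitian-Einstein metric for some $T\in\alpha$ if and only if $\mathcal{E}'$ admits a $T'$-Hermitian-Einstein metric for some $T'\in\alpha'$. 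As $\alpha'$ is nef, moreover, $\langle(\alpha')^{n-1}\rangle=(\alpha')^{n-1}$ is an ordinary intersection product.

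Next I would use semiampleness to replace $\alpha'$ by a Kähler class. Write $\alpha'=f^*\omega$ with $f\colon X'\to Z$ a morphism onto a compact normal space $Z$ and $\omega$ a Kähler class on $Z$. Since $(\alpha')^n>0$, the map $f$ is generically finite, so after Stein factorization it is bimeromorphic and $\omega$ is Kähler and big; every $f$-exceptional divisor $D$ satisfies $(\alpha')^{n-1}\cdot D=\omega^{n-1}\cdot f_*D=0$, so the vanishing underlying Assumption~\ref{assumption intro} again holds automatically because $\omega$ is nef and big. Applying Theorem~\ref{invariance intro} once more, now to $f$ with $\mathcal{G}:=(f_*\mathcal{E}')^{**}$ on $Z$, transfers both notions a second time, so that $\mathcal{E}'$ is $\langle(\alpha')^{n-1}\rangle$-slope stable (resp.\ carries an $\alpha'$-Hermitian-Einstein metric) if and only if $\mathcal{G}$ is $\langle\omega^{n-1}\rangle$-slope stable (resp.\ carries an $\omega$-Hermitian-Einstein metric). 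Because $\omega$ is Kähler, $\langle\omega^{n-1}\rangle=\omega^{n-1}$ and an $\omega$-Hermitian-Einstein metric is an ordinary admissible Hermitian-Einstein metric, so these are exactly the classical notions on the compact normal Kähler variety $(Z,\omega)$. Chen's theorem~\cite{Chen} then gives that $\mathcal{G}$ is slope stable if and only if it admits a Hermitian-Einstein metric, and chaining the two equivalences back through $f$ and $\psi$ proves the theorem.

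The skeleton above is short, and I expect the difficulty to lie entirely in the verifications attached to each reduction. The contraction $f$ arising from semiampleness is $\alpha'$-trivial rather than strictly $\alpha'$-negative, so I must confirm that it is admissible for Theorem~\ref{invariance intro}; this is legitimate precisely because the decisive intersection numbers $\langle(\alpha')^{n-1}\rangle\cdot[D]$ vanish on $f$-exceptional divisors, $\alpha'$ being nef and big, so that the slope and the Hermitian-Einstein equation are genuinely insensitive to the $f$-exceptional locus. I must also check that $X'$ and $Z$ are normal, so that $\mathcal{E}'$ and $\mathcal{G}$ are honest reflexive sheaves isomorphic to their counterparts off the exceptional loci. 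The most delicate point is the matching of the Hermitian-Einstein data across the maps: the normalizing constant $\lambda$, determined by $\int_X\langle\alpha^n\rangle$ and $\int_X c_1(\det\mathcal{E})\wedge\langle\alpha^{n-1}\rangle$, must be preserved, and the finiteness $\int|F_h|^2\,T^n<\infty$ must survive both the push-forward along $\psi$ and the pullback-and-push along $f$ of currents with controlled singularities. These reduce to the bimeromorphic invariance of volumes together with the exceptionality of the contracted loci, but they require care to guarantee that the Hermitian-Einstein problems on $X$, $X'$, and $Z$ are literally the same.
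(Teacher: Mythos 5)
Your overall strategy --- transporting both stability and the Hermitian--Einstein condition onto a normal K\"{a}hler model by bimeromorphic invariance and then quoting Chen's correspondence \cite{Chen} --- is exactly the paper's, and you correctly identify the two pillars (Theorem \ref{invariance intro} and \cite{Chen}) as well as the points needing verification (normality of the models, matching of the HE constant, finiteness of the curvature integral). The problem is your first reduction. A birational Zariski decomposition in the sense of Definition \ref{bir.Zar.} is a statement about a resolution $\mu:Z\to X$ \emph{above} $X$: it says $\mu^*\alpha=\langle\mu^*\alpha\rangle+[N]$ with the positive part $\langle\mu^*\alpha\rangle$ nef (here semiample and big). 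It does not supply an $\alpha$-negative contraction $\psi:X\dashrightarrow X'$ \emph{below} $X$ with $\psi_*\alpha$ semiample; the existence of such a model is an MMP-type output, available for $K_X$ on projective varieties (that is Corollary \ref{KH 1'}) but not for an arbitrary big class on a compact normal space, which is the generality of the theorem. So your space $X'$ is not known to exist, and the first step cannot be carried out as written.

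The repair is to delete $X'$ and collapse your two maps into one, which is what the paper does. Apply Proposition \ref{semiample-bimero} to the semiample big class $\langle\mu^*\alpha\rangle$ to obtain a bimeromorphic morphism $\pi:Z\to Y$ onto a compact normal K\"{a}hler space with $\langle\mu^*\alpha\rangle=\pi^*\omega$, $\omega$ K\"{a}hler; since $\Supp(N)\subset E_{nK}(\mu^*\alpha)=\Exc(\pi)$, the identity $\mu^*\alpha=\pi^*\omega+[N]$ with $N$ effective and $\pi$-exceptional exhibits the composite $X\dashrightarrow Y$ as genuinely $\alpha$-negative, so the invariance results (Lemma \ref{stability normal lemma}, Theorem \ref{stability bimero}, Theorem \ref{birational HE}) apply in a single step and land directly on $(Y,\omega,\pi_{[*]}\mu^{[*]}\mathcal{E})$, where \cite{Chen} finishes; the current is $T=\pi^*\omega+[N]$. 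This also dissolves your worry about the semiample fibration being only ``$\alpha'$-trivial rather than $\alpha'$-negative'': once the negative part $[N]$ and the contraction are packaged into the same map, the divisor $E$ of Definition \ref{beta-negative def} is $[N]\neq 0$ and the definition is met on the nose. Your observation that Assumption \ref{assumption intro} is automatic here because the relevant positive product is the $(n-1)$-st power of the pullback of a nef and big class is correct, and is the same mechanism the paper invokes via Theorem \ref{thm of CT} and Proposition \ref{nef big singular exceptional}.
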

If $X$ is smooth and $\alpha$ is a K\"{a}hler class, the above theorem is the well-known Kobayashi-Hitchin correspondence proven in  \cite{Don3},\cite{UY} and \cite{BS}.  Xuemiao Chen proved in singular settings \cite{Chen}. As a direct consequence of Theorem \ref{invariance intro} and Theorem \ref{KH intro}, the Kobayashi-Hitchin correspondence of a projective variety of general type coincides with that of the canonical model (see Example \ref{cor-stability-tangent-big}, Example \ref{HE blowup}, Corollary \ref{KH 1'}).

\vskip\baselineskip
One of the important properties of slope stable sheaves is the Bogomolov-Gieseker inequality. 
If a reflexive sheaf is slope stable with respect to a K\"{a}hler class, the inequality has been extensively studied (c.f. \cite{UY},\cite{BS}, \cite{Chen}). The Bogomolov-Gieseker inequality is closely related to the Miyaoka-Yau inequality, whose equality case characterizes the uniformization of projective varieties \cite{GKPT}.

In this paper, we establish the Bogomolov-Gieseker inequality for reflexive sheaves that are slope stable with respect to a nef and big class on a compact normal space. We do not need Assumption \ref{assumption intro} for the following theorem.
\begin{theo}[= Proposition \ref{BG nef big}]\label{BG intro}
Let $X$ be a compact normal space with a nef and big class $\alpha \in H^{1,1}_{BC}(X, \mathbb{R})$. Let $\mathcal{E}$ be a reflexive sheaf on $X$ and $\pi : \widehat{X} \rightarrow X$ be a resolution of singularities. Let $\pi^{[*]}\mathcal{E}:= (\pi^{*}\mathcal{E})^{**}$ be the reflexive pullback of $\mathcal{E}$. Suppose $\mathcal{E}$ is $\alpha^{n-1}$-slope stable. Then, the following Bogomolov-Gieseker inequality holds:
$$
\left(2rc_{2}(\pi^{[*]}{\mathcal{E}}) - (r-1)c_{1}(\pi^{[*]}{\mathcal{E}})^{2}\right)\cdot(\pi^{*}\alpha)^{n-2}\geq 0.
$$
\end{theo}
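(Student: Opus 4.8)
The plan is to reduce the statement to the classical (Kähler, admissible) Bogomolov--Gieseker inequality on the smooth resolution $\widehat{X}$ by a perturbation-and-limit argument, feeding in the Hermitian--Einstein theory together with L\"{u}bke's pointwise curvature inequality.

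First I would transfer the stability from $X$ to $\widehat{X}$. Since $\alpha$ is nef and big, its positive product coincides with the ordinary intersection product, so the slope $\mu_{\pi^*\alpha}$ on $\widehat{X}$ is computed with $(\pi^*\alpha)^{n-1}$. For a reflexive subsheaf $\mathcal{F}\subset\pi^{[*]}\mathcal{E}$ the projection formula gives $(\pi^*\alpha)^{n-1}\cdot[D]=\alpha^{n-1}\cdot\pi_*[D]=0$ for every $\pi$-exceptional divisor $D$; hence exceptional divisors do not contribute to slopes and the correspondence $\mathcal{F}\leftrightarrow(\pi_*\mathcal{F})^{**}$ preserves slope inequalities. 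This is exactly why Assumption \ref{assumption intro} is unnecessary here. Consequently $\pi^{[*]}\mathcal{E}$ is $(\pi^*\alpha)^{n-1}$-slope stable on the compact K\"{a}hler manifold $\widehat{X}$, and it suffices to prove $\bigl(2rc_2(\pi^{[*]}\mathcal{E})-(r-1)c_1(\pi^{[*]}\mathcal{E})^2\bigr)\cdot(\pi^*\alpha)^{n-2}\ge 0$.

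Next I would perturb into the K\"{a}hler cone. Fix a K\"{a}hler form $\omega$ on $\widehat{X}$ and set $\alpha_\varepsilon:=\pi^*\alpha+\varepsilon[\omega]$, which is a K\"{a}hler class for every $\varepsilon>0$. The central claim is that $\pi^{[*]}\mathcal{E}$ remains $\alpha_\varepsilon$-slope stable for all sufficiently small $\varepsilon>0$. Granting this, the admissible Kobayashi--Hitchin correspondence on compact K\"{a}hler manifolds (Bando--Siu \cite{BS}, and in the normal setting \cite{Chen}) provides, for each such $\varepsilon$, an admissible Hermitian--Einstein metric $h_\varepsilon$ for a K\"{a}hler metric $\omega_\varepsilon\in\alpha_\varepsilon$, defined off the codimension-$\ge 2$ analytic set $\Sigma=\Sing(\pi^{[*]}\mathcal{E})$ and with $\int_{\widehat{X}\setminus\Sigma}|F_{h_\varepsilon}|^2\,\omega_\varepsilon^n<\infty$. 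Writing $F_{h_\varepsilon}=F_0+\tfrac1r(\operatorname{tr}F_{h_\varepsilon})\Id$, the Einstein condition forces the trace-free part $F_0$ to be primitive, and L\"{u}bke's pointwise identity expresses
\[
\Bigl(2rc_2(\pi^{[*]}\mathcal{E})-(r-1)c_1(\pi^{[*]}\mathcal{E})^2\Bigr)\wedge\frac{\omega_\varepsilon^{\,n-2}}{(n-2)!}
\]
as a nonnegative multiple of $|F_0|^2\,\omega_\varepsilon^n$. Since Bando--Siu show that the $L^2$ curvature of an admissible metric computes the Chern classes of the reflexive sheaf and that the Chern--Weil integrals converge, integrating this identity over $\widehat{X}\setminus\Sigma$ yields $\bigl(2rc_2-(r-1)c_1^2\bigr)\cdot\alpha_\varepsilon^{n-2}\ge 0$.

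Finally, I would let $\varepsilon\to 0$. The cohomological intersection pairing is continuous in the polarization, so $\bigl(2rc_2-(r-1)c_1^2\bigr)\cdot\alpha_\varepsilon^{n-2}\to\bigl(2rc_2-(r-1)c_1^2\bigr)\cdot(\pi^*\alpha)^{n-2}$, and the inequality passes to the limit, which is the assertion. The main obstacle is the persistence of stability under the perturbation, i.e.\ the boundary claim that $(\pi^*\alpha)$-stability on the edge of the K\"{a}hler cone propagates to the interior classes $\alpha_\varepsilon$. This is delicate because the slope gap $\mu_{\pi^*\alpha}(\pi^{[*]}\mathcal{E})-\mu_{\pi^*\alpha}(\mathcal{F})$ need not be bounded away from $0$ over all subsheaves $\mathcal{F}$; the remedy is to confine the potentially destabilizing subsheaves to a single bounded family, so that their slopes vary continuously in $\varepsilon$ and a uniform gap is recovered. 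In the projective case this is Grothendieck boundedness, while in the compact K\"{a}hler case it requires the analytic boundedness of the maximal destabilizing (Harder--Narasimhan) subsheaf; a limiting destabilizer would otherwise contradict the $(\pi^*\alpha)$-stability established in the first step.
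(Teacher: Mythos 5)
Your overall architecture coincides with the paper's: pass to the resolution (where, for nef and big $\alpha$, the positive product is the ordinary intersection product and exceptional divisors pair to zero, so no extra assumption is needed), perturb to the K\"ahler classes $\alpha_\varepsilon=\pi^*\alpha+\varepsilon[\omega]$, invoke the classical admissible Hermitian--Einstein/L\"ubke argument to get the Bogomolov--Gieseker inequality for each $\varepsilon>0$, and let $\varepsilon\to 0$ by continuity of the cup product. All of those steps are carried out, or cited, in exactly this way in the paper (Proposition \ref{BG nef big} together with Corollary \ref{stability app cor}).

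The one place where your proposal is not actually a proof is the step you yourself single out as the main obstacle: the persistence of stability from the boundary class $\pi^*\alpha$ to the interior classes $\alpha_\varepsilon$. You propose to obtain a uniform slope gap by confining the potential destabilizers to a bounded family --- Grothendieck boundedness in the projective case, and ``analytic boundedness of the maximal destabilizing subsheaf'' in the K\"ahler case. In the compact K\"ahler, non-projective setting this boundedness is a substantial unproven input, and your argument does not supply it. The paper closes this gap by an entirely different and more elementary mechanism (Proposition \ref{stability app}): first, a uniform upper bound $\deg(\mathcal{F},\gamma)\le C$ over \emph{all} reflexive subsheaves, obtained from the second fundamental form computation $\int \mathrm{Tr}(pF_{h_0}p+\bar\partial p\wedge\partial_{h_0}p)\wedge\gamma$ on a resolution where the saturation is a subbundle (Lemma \ref{deg bound}); second, a basis of $H^{2n-2}(X,\mathbb{Q})$ consisting of classes represented by strictly positive currents in which $\langle\alpha^{n-1}\rangle$ has positive coefficients (Lemma \ref{good basis}), so that the degrees $\int c_1(\mathcal{G})\wedge w_i$ are bounded above and below and lie in a discrete set, hence take only finitely many relevant values. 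This yields a subsheaf of maximal slope (Lemma \ref{max-slope}) and therefore a uniform slope gap, with no boundedness-of-families result needed. If you want your proof to be complete in the K\"ahler category, you should either supply the boundedness statement you are relying on or replace it with a discreteness argument of this kind.
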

As a corollary of Theorem \ref{BG intro}, we obtain the characterization of the equality case on minimal projective varieties of general type.
\begin{corr}[= Theorem \ref{BG equality}]\label{BG cor intro}
Let $X$ be a normal projective variety with log canonical singularities where $K_X$ is nef and big. Let $\mathcal{E}$ be a reflexive sheaf on $X$. Suppose $\mathcal{E}$ is $c_1(K_X)^{n-1}$-stable. If there exists a resolution $\pi:Y\to X$ such that $\pi^{[*]}\mathcal{E}$ satisfies the Bogomolov-Gieseker equality: $\Delta(\pi^{[*]}\mathcal{E})c_1(\pi^*K_X)^{n-2}=0$, then $\mathcal{E}$ is projectively flat on $\Amp(K_X)$. 
\end{corr}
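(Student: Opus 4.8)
\emph{Plan of proof.} The strategy is to reduce the equality case to a pointwise Chern--Weil identity for a Hermitian--Einstein metric and then to show that vanishing of the total integral forces pointwise vanishing of the trace-free curvature. First I would produce such a metric. Since $X$ is a normal projective variety with log canonical singularities and $K_X$ is nef and big, the base-point-free theorem shows that $K_X$ is semiample; hence $\alpha=c_1(K_X)$ admits a birational Zariski decomposition with semiample positive part. By hypothesis $\mathcal{E}$ is $c_1(K_X)^{n-1}$-slope stable, so Theorem \ref{KH intro} applies and yields a $T$-Hermitian--Einstein metric $h$ on $\mathcal{E}|_{\Omega}$, where $\Omega=(X\setminus\Sing(\mathcal{E}))\cap\Amp(K_X)$ and $T\in\alpha$ is smooth Kähler on $\Amp(\alpha)$. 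Because $\alpha$ is nef and big, Theorem \ref{invariance intro} applies without Assumption \ref{assumption intro}, and transporting $h$ through the resolution $\pi:Y\to X$ gives a $T'$-Hermitian--Einstein metric on $\pi^{[*]}\mathcal{E}$ with $T'\in\pi^{*}\alpha$.

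Next I would invoke the pointwise Bogomolov--Gieseker identity. Writing $\omega$ for the smooth Kähler form $T'$ on the ample locus and $r=\rk(\mathcal{E})$, on the locus where $h$ is smooth the discriminant density can be written, for a positive constant $c_{n,r}>0$, as
\[
\Delta(\pi^{[*]}\mathcal{E})\wedge\frac{\omega^{\,n-2}}{(n-2)!}=c_{n,r}\,\big|\widehat{F}_{h}\big|^{2}\,\frac{\omega^{\,n}}{n!},
\qquad \widehat{F}_{h}:=F_{h}-\tfrac{1}{r}(\mathrm{tr}\,F_{h})\,\Id,
\]
the cross terms involving $\Lambda_{\omega}\widehat{F}_{h}$ being killed by the Hermitian--Einstein equation $\sqrt{-1}\Lambda_{\omega}F_{h}=\lambda\cdot\Id$. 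Integrating this is exactly the content underlying Theorem \ref{BG intro}, so the intersection number $\Delta(\pi^{[*]}\mathcal{E})\cdot(\pi^{*}\alpha)^{n-2}$ equals $c_{n,r}$ times the finite $L^{2}$-energy of $\widehat{F}_{h}$, finiteness being guaranteed by the integrability condition $\int_{\Omega}|F_{h}|^{2}_{T}\,T^{n}<\infty$ in the definition of a $T$-Hermitian--Einstein metric.

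Then, under the equality hypothesis $\Delta(\pi^{[*]}\mathcal{E})\cdot(\pi^{*}\alpha)^{n-2}=0$, the displayed nonnegative integral vanishes. Since its integrand is a continuous nonnegative density carrying the full non-pluripolar mass on the Zariski-open set where $h$ is smooth and $\omega$ is Kähler, I would conclude $\widehat{F}_{h}\equiv0$, i.e. $F_{h}=\tfrac{1}{r}(\mathrm{tr}\,F_{h})\,\Id$. Thus the Hermitian--Einstein connection has curvature proportional to the identity and $(\pi^{[*]}\mathcal{E},h)$ is projectively flat over $\Amp(\pi^{*}\alpha)$. As $\pi$ is an isomorphism over $\Amp(K_X)$ outside the exceptional locus and $\Amp(\pi^{*}\alpha)$ dominates $\Amp(K_X)$, this descends to projective flatness of $\mathcal{E}$ on $\Amp(K_X)$, after extending the flat projective connection across $\Sing(\mathcal{E})\cap\Amp(K_X)$---a set of codimension at least two---by reflexivity and a Hartogs-type removable-singularity argument.

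The hard part will be the passage from vanishing of the \emph{non-pluripolar} intersection number to pointwise vanishing of $\widehat{F}_{h}$ on the whole ample locus. The metric $h$ is smooth only on $\Omega$ and the reference current $T$ is Kähler only on $\Amp(\alpha)$, so one must verify that the Chern--Weil integral genuinely computes the topological number with no mass escaping to $X\setminus\Amp(\alpha)$ or to $\Sing(\mathcal{E})$; this is precisely where the finite-energy hypothesis and the concentration of non-pluripolar mass on the ample locus (together with the nef-and-big vanishing that makes Assumption \ref{assumption intro} unnecessary) enter. Making this bookkeeping rigorous, rather than the pointwise algebra of the Bogomolov identity, is the crux of the argument.
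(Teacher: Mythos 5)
Your overall strategy differs from the paper's, and the step you yourself flag as ``the crux'' is precisely the one that is missing and cannot be waved through. You want to run Chern--Weil theory directly on the resolution $Y$ against the degenerate current $T'\in\pi^*c_1(K_X)$, writing the intersection number $\Delta(\pi^{[*]}\mathcal{E})\cdot(\pi^*\alpha)^{n-2}$ as $c_{n,r}\int|\widehat F_h|^2\,\omega^n/n!$ over the ample locus. But the metric $h$ is only defined and smooth on a Zariski-open set, $\omega$ degenerates along $E_{nK}(\pi^*\alpha)$, and nothing in Definition \ref{THE defi} (finite $L^2$ energy of $F_h$ against $T^n$) by itself guarantees that the Chern--Weil integrand represents the cohomological quantity $\Delta(\pi^{[*]}\mathcal{E})\cdot(\pi^*\alpha)^{n-2}$ with no mass lost on $E_{nK}(\pi^*\alpha)\cup\Sing(\pi^{[*]}\mathcal{E})$. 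Indeed, the paper's own proof of Theorem \ref{BG equality} records only the one-sided inequality $\Delta(\pi^{[*]}\mathcal{E})c_1(\pi^*K_X)^{n-2}\ge\Delta(\mu_{[*]}\mathcal{E},h)\omega_Z^{n-2}\ge 0$, not the equality you assert; the exact identity is not available in this generality, and asserting it as an equation is a genuine gap (fortunately the inequality in the stated direction is all one needs, but you have established neither).

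The paper circumvents this entirely by descending rather than resolving: by the base-point-free theorem $K_X$ is semiample, giving a birational morphism $\mu:X\to Z$ onto the canonical model with $K_Z$ ample and $K_X=\mu^*K_Z$. Theorem \ref{stability bimero} transports stability to $\mu_{[*]}\mathcal{E}$ on $Z$, where the class is honestly K\"ahler, so Chen's admissible Hermitian--Einstein theory on normal K\"ahler spaces applies verbatim: it supplies both the Chern--Weil inequality quoted above and the equality-case conclusion that $\mu_{[*]}\mathcal{E}$ is projectively flat on $Z_{\reg}$. Projective flatness on $\Amp(K_X)$ is then pulled back via Proposition \ref{semiample-bimero}, since $\Exc(\pi\circ\mu)=E_{nK}(\pi^*K_X)$. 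If you want to salvage your direct approach, you must either prove the Chern--Weil comparison for the degenerate current $T'$ yourself (essentially redoing Chen's estimates near $E_{nK}$), or do what the paper does and reduce to the ample class on $Z$ where that analysis is already available. Your closing Hartogs-type extension across $\Sing(\mathcal{E})\cap\Amp(K_X)$ is likewise only sketched, whereas in the paper it is subsumed in Chen's statement on $Z_{\reg}$.
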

\noindent As in Deinition \ref{nonKahler locus singular}, the nonK\"{a}hler locus $E_{nK}(\alpha)$ of a big class $\alpha$ on a compact normal space $X$ contains the singular locus $X_{\sing}$. In particular, if $\varphi : X \to X_{\can}$ be the birational morphism to the canonical model, then $\Amp(K_X)$ is mapped to $X_{\can, \reg}$.

\begin{rema}[Related work]
Let $\alpha$ be a big class on a compact K\"{a}hler manifold $X$. Then the positive product $\langle\alpha^{n-1}\rangle$ lies in the closed cone $\Mov_1(X)$ consisting of movable classes (c.f. \cite{BDPP}). There are studies of slope stability with respect to movable curves on projective manifolds.
Lehmann-Xiao \cite{LX15} studied the correspondence between movable classes and big divisors via the map defined by the positive product: $L\to \langle c_1(L)^{n-1}\rangle$.
Greb-Kebekus-Peternell \cite{GKP} studied slope stability with respect to movable class on projective manifolds, and proved the Bogomolov-Gieseker inequality of slope semistable sheaves with respect to movable classes on projective surface. There is a chamber decomposition of the ample cone of projective manifolds such that the moduli space of slope semistable sheaves does not change in each chamber. It is known that the pathological phenomena occur on walls of chambers. Greb-Toma \cite{GT17} solved the problem by considering the moduli space of slope semistable sheaves with respect to movable curves $(H_1\cdot\ldots\cdot H_{n-1})$ where each $H_i$ are ample. 
\end{rema}

The organization of this paper is as follows: 
Section \ref{preliminary} is devoted to review basic notions and preliminary results. We mainly deal with positive cohomology classes on smooth and normal spaces. 
Section \ref{Assumption} explains the Assumption \ref{assumption intro} we need in this paper. 
Section \ref{stability 0} introduces the notion of $\langle\alpha^{n-1}\rangle$-slope stability and the proof of Theorem \ref{invariance intro} (1). 
Section \ref{THE 0} defines the notion of $T$-Hermitian-Einstein metrics and proves Theorem \ref{invariance intro} (2). 
Section \ref{KH corre} deals with the Kobayashi-Hitchin correspondence, Theorem \ref{KH intro}.   
Section \ref{BG ineq} discusses the Bogomolov-Gieseker inequality and includes Theorem \ref{BG intro} and Corollary \ref{BG cor intro}.

\section*{Acknowledgment}
The author would like to greatly thanks to Prof. Ryushi Goto for many helpful comments and constructive discussions, and for pointing out many typos in the draft of this paper. The author would like to thanks to Masataka Iwai for discussion around the Bogomolov-Gieseker inequality.
The author would like to thanks to Prof. Brian Lehmann for pointing out several important, and interesting prior works. The author would like to thanks to Shiyu Zhang for pointing out my misunderstanding and fruitful discussion.
 
\section{Preliminary}\label{preliminary}
In this paper, we assume that any compact normal space is reduced, irreducible and second countable.
\subsection{positive cohomology class}\label{positive cohomology class}
There is a several notions of positivities of bidegree $(1,1)$ cohomology classes.
\begin{defi}[c.f. \cite{Dem2}, \cite{FT18}]\label{positive class}
Let $X$ be a compact K\"{a}hler manifold and $\alpha\in H^{1,1}(X,\mathbb{R})$.
\begin{enumerate}
\item We say $\alpha$ is pseudo-effective if $\alpha$ is represented by a closed positive $(1,1)$-current.
\item We say $\alpha$ is big if $\alpha$ is represented by a K\"{a}hler current. Here a K\"{a}hler current is a closed positive $(1,1)$-current $T$ on $X$ satisfying $T \geq \omega$ for some strictly positive $(1,1)$-form $\omega$.
\item We say $\alpha$ is  nef if, for any $\varepsilon > 0$, there is a smooth $(1,1)$-form $\alpha_{\varepsilon}$ in $\alpha$ which satisfies $\alpha_{\varepsilon} \geq -\varepsilon \omega$ where $\omega$ is a strictly positive $(1,1)$-form on $X$.
\item We say $\alpha$ is  semiample if there is a holomorphic surjection $\pi:X\to Y$ with connected fibres to a normal K\"{a}hler space $Y$ with a K\"{a}hler class $\omega\in H^{1,1}_{BC}(Y,\mathbb{R})$ such that $\alpha=\pi^*\omega$.
\end{enumerate}
\end{defi}
We will see that, if a semiample class $\alpha$  is also big, then the holomorphic surjection $\pi:X\to Y$ in the Definition \ref{positive class} is a bimeromorphic morphism (Proposition \ref{semiample-bimero}).

On a compact normal space, we use the Bott-Chern cohomology group to define positive cohomology classes. The readers can consult to \cite{HP16} for the definition of smooth differential forms and the Bott-Chern classes on singular spaces.
\begin{defi}[Definition 3.10 in \cite{HP16}]
Let $X$ be a compact normal space. Let $\alpha\in H^{1,1}_{BC}(X)$.
We say $\alpha$ is nef if, for any $\varepsilon>0$, there exists a smooth representative $\alpha_{\varepsilon}\in \alpha$ such that $\alpha_{\varepsilon}\ge -\varepsilon\omega$, where $\omega$ is a smooth strictly positive $(1,1)$-form on $X$.
\end{defi}
\begin{defi}\label{big normal}
Let $X$ be a compact normal space. A big class on $X$ is a cohomology class $\alpha \in H^{1,1}_{BC}(X, \mathbb{R})$ such that, for any resolution of singularities $f : \widehat{X} \rightarrow X$, the pull-back $f^{*}\alpha$ is a big class on $\widehat{X}$.
\end{defi}  
Das-Hacon-P\u{a}un \cite{DHP24} showed the following characterization of bigness and nefness via resolution of singularities:
\begin{lemm}[\cite{DHP24}, Corollary 2.32, Lemma 2.35]
Let $X$ be a compact normal space. Let $\pi:\widehat{X}\to X$ be a resolution of singularities of $X$. Then
\begin{enumerate}
\item $\alpha\in H^{1,1}_{BC}(X)$ is nef if $\pi^*\alpha\in H^{1,1}(\widehat{X},\mathbb{R})$ is nef,
\item $\alpha$ is big if there exists a K\"{a}hler current $T$ in $\alpha$.
\item A nef class $\alpha\in H^{1,1}_{BC}(X)$ is big in the sense of Definition \ref{big normal} if $(\pi^*\alpha)^n>0$.
\end{enumerate} 
\end{lemm}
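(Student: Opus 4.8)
The three assertions are single implications, and I would handle them separately, in each case reducing a statement about the possibly singular space $X$ to the smooth model $\widehat{X}$, where the classical K\"{a}hler theory applies. For (1), the plan is to descend an almost-positive representative from $\widehat{X}$ to $X$ by exploiting that pulled-back forms vanish along the fibers of $\pi$. Fix a smooth representative $\theta\in\alpha$ on $X$ and a smooth strictly positive form $\omega$ on $X$. Since $\pi^*\alpha$ is nef, for each $\varepsilon>0$ the $\partial\bar\partial$-lemma on the K\"{a}hler manifold $\widehat{X}$ yields a smooth function $u_\varepsilon$ with $\pi^*\theta+dd^cu_\varepsilon\geq -\varepsilon\,\pi^*\omega$. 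Restricting this inequality to a compact connected fiber $F=\pi^{-1}(x)$, both $\pi^*\theta|_F$ and $\pi^*\omega|_F$ vanish, so $dd^c(u_\varepsilon|_F)\geq 0$, and by the maximum principle $u_\varepsilon$ is constant on every fiber. Hence $u_\varepsilon=\pi^*v_\varepsilon$ for a function $v_\varepsilon$ on $X$ that is smooth in the sense of \cite{HP16}, and $\alpha_\varepsilon:=\theta+dd^cv_\varepsilon$ is a smooth representative of $\alpha$ with $\pi^*\alpha_\varepsilon\geq -\varepsilon\,\pi^*\omega$. Verifying positivity on the dense locus where $\pi$ is biholomorphic and extending by continuity gives $\alpha_\varepsilon\geq -\varepsilon\omega$, so $\alpha$ is nef.

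For (2), I would pull the K\"{a}hler current up rather than down. If $T\geq\omega$ is a K\"{a}hler current in $\alpha$, then for any resolution $f$ the pullback $f^*T$, well defined since $T$ has local quasi-plurisubharmonic potentials, is a closed positive current in $f^*\alpha$ with $f^*T\geq f^*\omega$. Bounding the volume from below by the absolutely continuous mass of a positive current in the class (see \cite{BEGZ}), I obtain $\operatorname{vol}(f^*\alpha)\geq \int_{\widehat{X}}(f^*T)_{ac}^n\geq \int_{\widehat{X}}(f^*\omega)^n=\int_X\omega^n>0$, the middle integral being computed on the dense locus where $f$ is an isomorphism. Positive volume means $f^*\alpha$ is big on $\widehat{X}$ for every resolution $f$, which is precisely bigness of $\alpha$ in the sense of Definition \ref{big normal}.

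For (3), the argument reduces to the Demailly--P\u{a}un characterization on the smooth model. Since $\alpha$ is nef, pulling back smooth almost-positive representatives shows $\pi^*\alpha$ is nef on $\widehat{X}$; combined with the hypothesis $(\pi^*\alpha)^n>0$, the theorem that a nef class with positive top self-intersection on a compact K\"{a}hler manifold is big gives that $\pi^*\alpha$ is big. To obtain bigness for an arbitrary resolution $f$ as required by Definition \ref{big normal}, I would pass to a common modification $\mu:X'\to\widehat{X}$: the class $\mu^*\pi^*\alpha$ is again nef with $(\mu^*\pi^*\alpha)^n=(\pi^*\alpha)^n>0$ by the projection formula, hence big by the same theorem, and bigness is then inherited by $f^*\alpha$.

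I expect the main obstacle to be the descent step in (1): specifically, justifying that a function on $\widehat{X}$ which is smooth and constant along the fibers of $\pi$ descends to a smooth function on the normal space $X$ in the sense of \cite{HP16}, and that pointwise positivity checked on the regular locus propagates to a genuine inequality of Bott--Chern representatives on all of $X$. The currential manipulations in (2), namely the pullback of a K\"{a}hler current and the volume--versus--absolutely-continuous-mass inequality, and the reduction in (3), are comparatively routine once the corresponding facts on the smooth K\"{a}hler model are available.
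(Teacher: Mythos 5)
First, a remark on what you are being compared against: the paper does not prove this lemma at all --- it is quoted from \cite{DHP24} (Corollary 2.32 and Lemma 2.35) --- so your argument has to stand on its own. Parts (2) and (3) of your sketch are essentially sound. For (2), the chain $\mathrm{vol}(f^*\alpha)\ge\int_{\widehat{X}}(f^*T)_{ac}^n\ge\int_{\widehat{X}}(f^*\omega)^n=\int_X\omega^n>0$ together with Boucksom's theorem that a pseudo-effective class of positive volume is big does the job (an even quicker route: $f^*\alpha=f^*\{\omega\}+\{f^*(T-\omega)\}$ is a big class plus a pseudo-effective class). For (3), nefness clearly pulls back, Demailly--P\u{a}un gives bigness of $\pi^*\alpha$, and bimeromorphic invariance of bigness between smooth models handles an arbitrary resolution.

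The genuine gap is in part (1), precisely at the step you treat as the starting point. Nefness of $\pi^*\alpha$ on the manifold $\widehat{X}$ gives, for each $\varepsilon>0$, a smooth $u_\varepsilon$ with $\pi^*\theta+dd^cu_\varepsilon\ge-\varepsilon\,\omega_{\widehat{X}}$ for a K\"ahler form $\omega_{\widehat{X}}$ on $\widehat{X}$ --- not the inequality $\ge-\varepsilon\,\pi^*\omega$ that you assert. Since $\pi^*\omega$ degenerates along the positive-dimensional fibres of $\pi$ while $\omega_{\widehat{X}}$ does not, the bound you need is strictly stronger than what nefness provides, and there is no reason a nef class should admit representatives whose negative part is controlled by the degenerate pulled-back form. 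Consequently, restricting to a fibre $F=\pi^{-1}(x)$ only yields $dd^c(u_\varepsilon|_F)\ge-\varepsilon\,\omega_{\widehat{X}}|_F$, i.e.\ $u_\varepsilon|_F$ is $\varepsilon\omega_{\widehat{X}}$-plurisubharmonic rather than plurisubharmonic, and the maximum principle no longer forces it to be constant on $F$. The descent $u_\varepsilon=\pi^*v_\varepsilon$ therefore collapses, and one cannot repair it by letting $\varepsilon\to0$, because the potentials $u_\varepsilon$ of a nef class need not converge (nef classes in general admit no semipositive representative). This is exactly why the descent of nefness to the normal space is a nontrivial result in \cite{DHP24} rather than a formal exercise; a self-contained proof of (1) requires a genuinely different mechanism (e.g.\ a characterization of the image of $\pi^*:H^{1,1}_{BC}(X)\to H^{1,1}(\widehat{X},\mathbb{R})$ together with a regularization argument on $X$ itself), not the fibrewise maximum principle. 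The secondary issue you flag --- that a fibrewise-constant smooth function on $\widehat{X}$ descends to a function that is smooth on $X$ in the sense of \cite{HP16} --- is real as well, but it never gets reached because the constancy itself fails.
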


\subsection{nonpluripolar product and positive product}\label{nonpluri-positive product}
Let $X$ be a compact K\"{a}hler manifold.
Let $T = \theta + dd^c\varphi$ be a closed positive $(1,1)$-current on $X$, where $\theta$ is a smooth closed $(1,1)$-form. In this paper, we only consider currents $T$ whose postential $\varphi$ has its unbounded locus contained in an analytic subvariety $V$ (i.e. $T$ has a small unbounded locus \cite{BEGZ}). In this case, the $nonpluripolar$ $product$ of $T$ is defined as $\langle T^p\rangle:=\mathbb{1}_{X\setminus V}T^p$ \cite{BEGZ}. Here $T^p$ on $X\setminus V$ denotes the product in the sense of Bedford-Taylor \cite{BT}. It is shown in \cite{BEGZ} that $\langle T^p\rangle$ is a closed positive $(p,p)$-current.
 
\hspace{-10.5pt}Let $\alpha$ be a pseudo-effective class on $X$ and $\theta \in \alpha$ be a smooth $(1,1)$ form.
Let $T = \theta + dd^{c}\varphi$ and $T' = \theta + dd^{c}\varphi'$ be closed positive $(1,1)$-currents in $\alpha$. We say that $T$ is less singular than $T'$ if $\varphi' \leq \varphi+f$ for some bounded function $f \in L^{\infty}(X)$. A closed positive $(1,1)$ current $T \in \alpha$ is said to have $minimal$ $singularities$, often denoted by $T_{\min}$, if $T_{\min}$ is less singular than any other closed positive $(1,1)$-current in $\alpha$ \cite{BEGZ}. Although a closed positive $(1,1)$-current with minimal singularities in $\alpha$ is not unique, the following proposition holds. 
\begin{prop}[\cite{BEGZ}]\label{positive product}
Let $\alpha$ be a big class and $T_{\min} \in \alpha$ be a closed positive $(1,1)$-current with minimal singularities. Then, for any closed positive $(1,1)$ current $T \in \alpha$, the inequality $\{\langle T^{p} \rangle\} \leq \{\langle T_{\min}^{p} \rangle\}$ holds for $p = 1, \ldots, n$.
In particular, the cohomology class $\langle \alpha^{p} \rangle := \{\langle T_{\min}^{p} \rangle\}$ for $p = 1, \ldots, n$ is uniquely determined by $\alpha$ and $p$.
\end{prop}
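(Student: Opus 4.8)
The plan is to derive the proposition from the \emph{monotonicity} of non-pluripolar products under comparison of singularities, which is the genuine content here. First I would normalize potentials: fixing a smooth $\theta\in\alpha$ and writing $T=\theta+dd^c\varphi$, $T_{\min}=\theta+dd^c\varphi_{\min}$, the hypothesis that $T_{\min}$ is less singular than $T$ gives $\varphi\le\varphi_{\min}+C$ for some constant $C$; since adding a constant does not change the current, I may assume $\varphi\le\varphi_{\min}$. The whole statement then reduces to the following claim: if $\varphi\le\psi$ are $\theta$-psh with small unbounded locus, then $\{\langle(\theta+dd^c\varphi)^p\rangle\}\le\{\langle(\theta+dd^c\psi)^p\rangle\}$ for every $p$. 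Granting this, the inequality in the proposition is immediate with $\varphi=\varphi_{T}$ and $\psi=\varphi_{\min}$, and the uniqueness of $\langle\alpha^p\rangle$ follows because any two currents with minimal singularities are mutually less singular, hence their potentials differ by a bounded function, forcing equality of the two classes.

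Two analytic inputs would drive the argument. The first is a \emph{bounded-perturbation invariance}: if two $\theta$-psh potentials $u,u'$ differ by a bounded function, then the cohomology classes of their non-pluripolar $p$-th products coincide. I would establish this through the integration-by-parts formula for non-pluripolar products with small unbounded locus, writing $\langle(\theta+dd^cu)^p\rangle-\langle(\theta+dd^cu')^p\rangle$ as a telescoping sum of terms $dd^c\big[(u-u')\,\langle(\theta+dd^cu)^{j}\wedge(\theta+dd^cu')^{p-1-j}\rangle\big]$; since $u-u'$ is bounded and the mixed products are well-defined closed positive currents, each term is $dd^c$-exact and dies in cohomology. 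The second input is the \emph{plurifine locality} of the non-pluripolar product: on a plurifine open set where two potentials agree, the corresponding products agree.

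For the monotonicity claim I would use the canonical truncations $\varphi_k:=\max(\varphi,\psi-k)$. These decrease to $\varphi$, and since $\psi-k\le\varphi_k\le\psi$ they differ from $\psi$ by a bounded function, so bounded-perturbation invariance gives $\{\langle(\theta+dd^c\varphi_k)^p\rangle\}=\{\langle(\theta+dd^c\psi)^p\rangle\}$ for every $k$. On the plurifine open set $O_k:=\{\varphi>\psi-k\}$ one has $\varphi_k=\varphi$, so by plurifine locality $\mathbb{1}_{O_k}\langle(\theta+dd^c\varphi_k)^p\rangle=\mathbb{1}_{O_k}\langle(\theta+dd^c\varphi)^p\rangle$. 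Splitting each product over $O_k$ and its complement, I would obtain
\[
\{\langle(\theta+dd^c\psi)^p\rangle\}-\{\langle(\theta+dd^c\varphi)^p\rangle\}=\big\{\mathbb{1}_{X\setminus O_k}\langle(\theta+dd^c\varphi_k)^p\rangle\big\}-\big\{\mathbb{1}_{X\setminus O_k}\langle(\theta+dd^c\varphi)^p\rangle\big\}.
\]
Now $O_k$ increases to $\{\varphi>-\infty\}$ up to a pluripolar set, and the non-pluripolar product charges no pluripolar set, so the last class tends to $0$; meanwhile the positive currents $\mathbb{1}_{X\setminus O_k}\langle(\theta+dd^c\varphi_k)^p\rangle$ have masses uniformly bounded by $\{\langle(\theta+dd^c\psi)^p\rangle\}$ paired with $\{\omega\}^{n-p}$. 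Extracting a weakly convergent subsequence to a closed positive current $S$, the fixed left-hand class equals $\{S\}\ge 0$, which is the desired inequality.

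The main obstacle is the rigorous justification of the two inputs in the singular non-pluripolar setting: the integration-by-parts step requires care in making sense of the products $(u-u')\,\langle\cdots\rangle$ and in verifying that $dd^c$ of such a current is genuinely exact, while the plurifine-locality identity and the passage to the weak limit rest on the non-pluripolar product putting no mass on pluripolar sets together with weak compactness of positive currents of bounded mass. These are precisely the foundational results of \cite{BEGZ}; once they are available, the truncation-and-limit scheme above yields both the monotonicity inequality and the well-definedness of $\langle\alpha^p\rangle$.
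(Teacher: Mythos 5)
The paper gives no proof of this proposition at all: it is imported verbatim from \cite{BEGZ} (the monotonicity theorem for non-pluripolar products, Theorem 1.16 there, specialized to currents with small unbounded locus), so there is no in-paper argument to compare against. Your proposal is, in substance, a faithful reconstruction of the BEGZ proof: normalizing so that $\varphi\le\varphi_{\min}$, invoking bounded-perturbation invariance of the class of $\langle\cdot^p\rangle$ (proved by integration by parts, which is legitimate here precisely because the paper restricts to potentials unbounded only along analytic sets, and because the unbounded locus of $\varphi_k=\max(\varphi,\psi-k)$ is contained in that of $\psi$), plurifine locality, and the truncation-plus-weak-compactness limit; the uniqueness clause then follows since two minimal-singularity currents are mutually less singular. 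The one place your write-up is loose is the displayed identity: $\mathbb{1}_{X\setminus O_k}\langle(\theta+dd^c\varphi_k)^p\rangle$ is not a closed current, so $\{\cdot\}$ of it is not defined; what is closed is the difference $\langle(\theta+dd^c\varphi_k)^p\rangle-\langle(\theta+dd^c\varphi)^p\rangle$, and your final step should be phrased as extracting a weak limit of this sequence of closed currents (its positivity coming from $\mathbb{1}_{X\setminus O_k}\langle(\theta+dd^c\varphi)^p\rangle\to 0$ because the non-pluripolar product charges no pluripolar set). With that rephrasing the argument is the standard and correct one.
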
 
The inequality of bidegree $(p,p)$ cohomology classes $\beta \geq \alpha$ means that the difference $\beta - \alpha$ is represented by a closed positive $(p,p)$-current. 
\begin{defi}[\cite{BEGZ}]\label{positive product defi}
Let $X$ be a compact K\"{a}hler manifold and $\alpha$ be a big class on $X$. The $positive$ $product$ of $\alpha$ is defined as $\langle\alpha^p\rangle:=\{\langle T_{\min}^p\rangle\}$, here $T_{\min}$ is a closed positive $(1,1)$-current with minimal singularities in $\alpha$.
\end{defi}
There is another algebraic notion of product, so called {\it{movable intersection product}} in \cite{BDPP}. M. Principato \cite{Mario} showed that movable intersection product coincides with positive product. For nef and big classes, the nonpluripolar product coincides with the usual intersection product:
\begin{prop}[see \cite{BDPP} Theorem 3.5 ({$\mathrm{ii}$})]\label{nef positive product}
Let $X$ be a compact K\"{a}hler manifold and $\alpha$ be a big and nef class on $X$. Then $\langle \alpha^{p} \rangle = \alpha^{p}$ holds for $p = 1, \cdots n$.
\end{prop}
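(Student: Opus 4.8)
The plan is to reduce the statement to the (trivial) case of Kähler classes by a Kähler perturbation, and then control the behaviour of the positive product as the perturbations shrink. Fix a Kähler form $\omega$ on $X$ and, for $t>0$, set $\alpha_t := \alpha + t\{\omega\}$. Since $\alpha$ is nef, for every $\varepsilon>0$ there is a smooth representative $\alpha_\varepsilon \in \alpha$ with $\alpha_\varepsilon \geq -\varepsilon\omega$; choosing $\varepsilon<t$ gives a smooth closed form $\alpha_\varepsilon + t\omega \geq (t-\varepsilon)\omega>0$, so each $\alpha_t$ is a Kähler class. For a Kähler class the current of minimal singularities may be taken to be a smooth Kähler form, whose non-pluripolar self-products are simply its ordinary wedge powers; hence $\langle\alpha_t^p\rangle = \alpha_t^p$ for every $p$ and every $t>0$. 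The whole problem is therefore to show $\langle\alpha^p\rangle = \lim_{t\to 0}\langle\alpha_t^p\rangle$, i.e. that the positive product behaves continuously as the Kähler classes $\alpha_t$ decrease to the nef and big class $\alpha$.

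For one inequality I would use monotonicity of the positive product. Since $\alpha_t - \alpha = t\{\omega\}$ is represented by a Kähler, hence positive, current, we have $\alpha \leq \alpha_t$ in the sense of the excerpt, and monotonicity of $\beta\mapsto\langle\beta^p\rangle$ on the big cone (as in \cite{BEGZ}) yields $\langle\alpha^p\rangle \leq \langle\alpha_t^p\rangle = \alpha_t^p$. Letting $t\downarrow 0$ gives $\langle\alpha^p\rangle \leq \alpha^p$. This direction uses nefness only through the positivity needed to realize the perturbation, and does not capture the full content of the proposition.

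The reverse inequality $\langle\alpha^p\rangle \geq \alpha^p$ is where nefness is essential, and I expect it to be the main obstacle. By Proposition \ref{positive product} every individual positive current $T\in\alpha$ satisfies $\{\langle T^p\rangle\}\leq\langle\alpha^p\rangle$, so it suffices to exhibit positive currents in $\alpha$ whose non-pluripolar $p$-th products carry cohomology classes converging to $\alpha^p$. Here I would invoke Demailly regularization: because $\alpha$ is nef one can choose currents $T_k = \theta + dd^c\varphi_k$ with analytic singularities along subvarieties $Z_k$ and with $T_k \geq -\delta_k\omega$ where $\delta_k\downarrow 0$. The cohomology class of the non-pluripolar part of $(T_k + \delta_k\omega)^p$ is $(\alpha + \delta_k\{\omega\})^p$ minus a contribution supported on $Z_k$, and the point of the nef and big case is precisely that this lost contribution tends to $0$, so that the limit recovers $\lim_k(\alpha+\delta_k\{\omega\})^p = \alpha^p$. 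Equivalently, one may phrase the entire argument through the continuity of the positive product on the big cone established in \cite{BEGZ}, applied to $\alpha_t\to\alpha$; either way, the substantive analytic input is that no mass is lost in the non-pluripolar product when the class is nef, which is the heart of \cite{BDPP} and the step I would expect to require the most care.
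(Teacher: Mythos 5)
The paper offers no proof of this proposition: it is quoted verbatim from \cite{BDPP}, Theorem 3.5 (ii), so there is no internal argument to compare yours against, and your proposal must be judged on its own merits. Your overall strategy --- perturb to the K\"ahler classes $\alpha_t=\alpha+t\{\omega\}$, where nefness guarantees $\alpha_t$ is K\"ahler for $t>0$, and then pass to the limit --- is exactly the standard route. The upper bound is clean: monotonicity of the positive product under adding a pseudo-effective class gives $\langle\alpha^p\rangle\le\langle\alpha_t^p\rangle=\alpha_t^p$, and letting $t\downarrow 0$ yields $\langle\alpha^p\rangle\le\alpha^p$ since the cone of classes represented by positive $(p,p)$-currents is closed.

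The reverse inequality as you first phrase it has a genuine gap. The regularized currents $T_k\ge-\delta_k\omega$ produced by Demailly regularization are not closed \emph{positive} currents in $\alpha$, so Proposition \ref{positive product} does not apply to them; and $T_k+\delta_k\omega$ lies in the class $\alpha_{\delta_k}$, not in $\alpha$, so its non-pluripolar product only bounds $\langle\alpha_{\delta_k}^p\rangle$ from below, which tells you nothing new about $\langle\alpha^p\rangle$. To repair this line of attack you must produce competitors inside $\alpha$ itself: fix a K\"ahler current $T_+\in\alpha$ with analytic singularities and $T_+\ge\delta_0\omega$ (this uses bigness), and test with the convex combinations $S_s:=(1-s)\alpha_\varepsilon+sT_+$ with $\varepsilon\le s\delta_0$, which are genuinely positive currents in $\alpha$; the remaining work is to show that $\mathbb{1}_{X\setminus Z}S_s^p$ loses no cohomological mass on the singular set $Z$ of $T_+$ as $s\to 0$, which is precisely the computation carried out in \cite{BDPP}. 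Your alternative phrasing via continuity of the positive product on the open big cone is legitimate and cleaner --- super-additivity together with homogeneity yields that continuity in \cite{BEGZ} --- and with that citation in hand the proposition follows at once from $\langle\alpha^p\rangle=\lim_{t\to 0}\langle\alpha_t^p\rangle=\lim_{t\to 0}\alpha_t^p=\alpha^p$. So the proposal is salvageable, but the Demailly-regularization version needs the repair above, and the continuity version needs the continuity statement to be cited rather than merely asserted.
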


A big class is characterized by the volume as follows.
\begin{prop}[\cite{BEGZ}]\label{big volume}
A pseudo-effective class $\alpha$ is big if $\langle \alpha^{n} \rangle \neq 0$.
\end{prop}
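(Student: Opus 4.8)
The plan is to prove the substantive implication, namely that positive top non-pluripolar mass forces bigness; the reverse is immediate, since a big class $\alpha$ dominates a Kähler class $\{\delta\omega\}$ and the volume $\mathrm{vol}(\beta):=\int_X\langle\beta^n\rangle$ is monotone on the pseudo-effective cone, whence $\int_X\langle\alpha^n\rangle\ge\delta^n\int_X\omega^n>0$ (compare Proposition \ref{positive product}). So I assume $\langle\alpha^n\rangle\ne0$. Since $\langle\alpha^n\rangle=\{\langle T_{\min}^n\rangle\}$ is represented by a positive measure, this means $v:=\int_X\langle T_{\min}^n\rangle>0$, where $T_{\min}\in\alpha$ is a closed positive $(1,1)$-current with minimal singularities. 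The goal is then to exhibit a Kähler current in $\alpha$.

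First I would regularize. By Demailly's regularization theorem, I approximate $T_{\min}$ by a family of closed $(1,1)$-currents $T_\varepsilon=\theta+dd^c\varphi_\varepsilon\in\alpha$ with analytic singularities and $T_\varepsilon\ge-\varepsilon\omega$, where $\theta\in\alpha$ is a fixed smooth representative and $\omega$ is a Kähler form. The essential point is that these approximants can be chosen so that their non-pluripolar masses recover the volume, $\int_X\langle T_\varepsilon^n\rangle\to v$ as $\varepsilon\to0$; this is the \emph{orthogonality estimate} of Boucksom--Demailly--P\u{a}un--Peternell and Boucksom--Eyssidieux--Guedj--Zeriahi, which guarantees that the polar part of $T_\varepsilon$ does not absorb the mass.

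Next I would pass to a resolution. Fixing $\varepsilon$ small, let $\mu=\mu_\varepsilon:\widehat X\to X$ be a log resolution of the analytic singularities of $T_\varepsilon$, so that $\mu^*T_\varepsilon=\theta_\varepsilon+[E_\varepsilon]$ with $E_\varepsilon\ge0$ effective and $\theta_\varepsilon$ a smooth form satisfying $\theta_\varepsilon\ge-\varepsilon\,\mu^*\omega$ and $\int_{\widehat X}\theta_\varepsilon^{\,n}=\int_X\langle T_\varepsilon^n\rangle$, which is close to $v>0$. Thus $\{\theta_\varepsilon\}$ is an ``almost nef'' class of positive top self-intersection, and by Demailly's holomorphic Morse inequalities it is big on $\widehat X$ once $\varepsilon$ is sufficiently small (for genuinely nef classes this is exactly the criterion $(\mu^*\alpha)^n>0$ recorded in the cited Das--Hacon--P\u{a}un lemma). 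Then $\mu^*\alpha=\{\theta_\varepsilon\}+\{E_\varepsilon\}$ is the sum of a big class and a pseudo-effective class, hence big, and since bigness is a bimeromorphic invariant of $\mu^*\alpha$ (volumes agree), $\alpha$ is big on $X$.

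The main obstacle is the orthogonality/mass-recovery step: Demailly regularization itself is standard, but producing approximants whose \emph{non-pluripolar} masses converge to $v$ rather than dropping is precisely the delicate control on concentration of mass along the exceptional and polar loci. Granting it, the remaining ingredients---resolution, the nef-type Morse bigness criterion, and descent of bigness under modification---are routine.
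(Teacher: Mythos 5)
The paper offers no proof of this proposition: it is quoted from \cite{BEGZ} (ultimately it is Boucksom's theorem that a pseudo-effective class of positive volume is big), so the only thing to assess is whether your argument would actually establish it.

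Your outline follows the standard strategy (regularize $T_{\min}$, resolve the analytic singularities, reduce to a positivity criterion for the resulting smooth class), and your reverse implication and the descent of bigness under modification are fine. The genuine gap is the final bigness criterion. On the resolution you have $\mu^*T_\varepsilon=\theta_\varepsilon+[E_\varepsilon]$ with $\theta_\varepsilon\ge-\varepsilon\mu^*\omega$ only \emph{almost} nef, and you claim $\{\theta_\varepsilon\}$ is big ``by Demailly's holomorphic Morse inequalities.'' Those inequalities are a statement about line bundles; for a transcendental $(1,1)$-class on a compact K\"ahler manifold they do not apply. The criterion that is actually available — Demailly--P\u{a}un: a \emph{nef} class with positive top self-intersection is big, which is the statement you point to in the Das--Hacon--P\u{a}un lemma — applies only to the genuinely nef class $\{\theta_\varepsilon\}+\varepsilon\{\mu^*\omega\}$. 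Running your argument with that class shows that $\alpha+\varepsilon\{\omega\}$ is big for every $\varepsilon>0$, which is strictly weaker than bigness of $\alpha$: any nef non-big class satisfies the same conclusion. So as written the proof does not close. To remove the $\varepsilon$-loss one needs a quantitative criterion for a \emph{difference} of nef classes — e.g.\ Popovici's theorem that $\beta-\gamma$ contains a K\"ahler current when $\beta,\gamma$ are nef and $\beta^n-n\,\beta^{n-1}\cdot\gamma>0$, applied with $\beta=\{\theta_\varepsilon\}+\varepsilon\{\mu^*\omega\}$ and $\gamma=\varepsilon\{\mu^*\omega\}$ together with a uniform bound on $\beta^{n-1}\cdot\mu^*\omega$ — or one must run the Demailly--P\u{a}un mass-concentration argument directly on the varying resolutions. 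This is precisely the hard core of the theorem, not the ``routine'' ingredient your last paragraph makes it out to be.

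Two smaller points. The mass-recovery step you flag as the main obstacle is not the orthogonality estimate of \cite{BDPP} (that concerns $\langle\alpha^{n-1}\rangle\cdot N(\alpha)=0$); what is needed is the almost-everywhere convergence of the absolutely continuous parts in Demailly's regularization together with Fatou's lemma (or the continuity results of \cite{BEGZ}), and also some care in identifying $\int_{\widehat X}\theta_\varepsilon^{\,n}$ with a non-pluripolar mass, since $\theta_\varepsilon$ is not positive. Finally, note that the paper defines $\langle\alpha^n\rangle$ only for big $\alpha$, so for a merely pseudo-effective class the hypothesis should be read via the extension $\langle\alpha^n\rangle=\lim_{\varepsilon\to0}\langle(\alpha+\varepsilon\omega)^n\rangle$; your proof should say which definition it is using.
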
 

\subsection{non-K\"{a}hler locus, non-nef locus and Null locus}\label{nonKahler-nef-null}
\begin{defi}[\cite{BEGZ}, see also \cite{Bou2}]\label{ample locus}
Let $X$ be a compact K\"{a}hler manifold and $\alpha$ be a big class on $X$. The $ample$ $locus$ of $\alpha$ is the subset of $X$ defined as follows,
\begin{equation}
\Amp(\alpha):= \{ x \in X \mid \text{There is a K\"{a}hler current $T \in \alpha$ smooth around $x$.\}}
\end{equation}
The complement $E_{nK}(\alpha) = X \setminus \Amp(\alpha)$ is the $non\text{-}K\ddot{a}hler$ $locus$ of $\alpha$.
\end{defi}
\hspace{-10.5pt}It is known that $\Amp(\alpha)$ is Zariski open and $E_{nK}(\alpha)$ is Zariski closed \cite{Bou2}. For any big line bundle $L$, we denote by $\Amp(L) := \Amp(c_{1}(L))$ and $E_{nK}(L) := E_{nK}(c_{1}(L))$. As illustrated in the following example, the subvariety $E_{nK}(\alpha)$ can be viewed as a generalization of the exceptional locus of a birational morphism.
\begin{exam}
Let $X$ be a projective manifold of general type, that is, the canonical divisor $K_{X}$ is big. Then, by the minimal model program (MMP) starting from $X$, which terminates in this case, there is a birational morphism $f : X \dashrightarrow X_{can}$ to the canonical model $X_{can}$ of $X$ \cite{BCHM}. In this setting, $E_{nK}(K_{X})$ is the exceptional set of $f$. In fact, $f$ is a composition of divisorial contractions and flips, a special type of a birational map isomorphic in codimension 1. Hence, $E_{nK}(K_{X})$ is the sum of the exceptional divisors of divisorial contractions and the exceptional sets of flips. 
\end{exam}
To the non-K\"{a}hler locus of big class on singular spaces, we need the following lemma. The readers can find a study of non-K\"{a}hler locus on singular spaces in \cite{HP24}.
\begin{lemm}\label{nonKahler locus singular lemma} 
Let $X$ be a compact normal space and $\alpha\in H^{1,1}_{BC}(X,\mathbb{R})$ be a big class on $X$. Let $\pi:Y\to X$ be a resolution of singularities of $X$. Then the set $\pi(E_{nK}(\pi^*\alpha))$ is independent of $\pi$.
\end{lemm}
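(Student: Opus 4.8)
The plan is to compare two resolutions through a common dominating one and thereby reduce everything to a single blow-down $q\colon Y'\to Y$ sitting over $X$. Given two resolutions $\pi_1\colon Y_1\to X$ and $\pi_2\colon Y_2\to X$, I would resolve the main component of $Y_1\times_X Y_2$ to obtain a smooth $Y_3$ with bimeromorphic morphisms $q_i\colon Y_3\to Y_i$ and $\pi_3:=\pi_i\circ q_i\colon Y_3\to X$. Since each $\pi_i$ is an isomorphism over $X_{\reg}$, so is $\pi_3$, hence $\pi_3$ is again a resolution of singularities. Thus it suffices to treat a tower $\pi\colon Y\to X$ and $\pi'=\pi\circ q\colon Y'\to X$ with $q\colon Y'\to Y$ a bimeromorphic morphism of compact K\"ahler manifolds, and to prove $\pi(E_{nK}(\pi^*\alpha))=\pi'(E_{nK}((\pi')^{*}\alpha))$; applying this to the two towers $(\pi_i,\pi_3)$ gives the claim. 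Write $\beta:=\pi^*\alpha$, so that $(\pi')^{*}\alpha=q^*\beta$.

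Two facts drive the argument. The first is a curve computation: for any resolution $\rho\colon W\to X$ one has $\Exc(\rho)\subseteq E_{nK}(\rho^*\alpha)$, and consequently $X_{\sing}=\rho(\Exc(\rho))\subseteq\rho(E_{nK}(\rho^*\alpha))$. Indeed any curve $C$ contracted by $\rho$ satisfies $\rho^*\alpha\cdot C=\alpha\cdot\rho_*C=0$, so no K\"ahler current in $\rho^*\alpha$ can be smooth near $C$ (such a current would force $\rho^*\alpha\cdot C>0$); as $\Exc(\rho)$ is covered by contracted curves, the inclusion follows, and $\rho(\Exc(\rho))=X_{\sing}$ because $\rho$ is an isomorphism exactly over $X_{\reg}$. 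The second is a local transfer off the exceptional locus: $q$ is a biholomorphism from $Y'\setminus\Exc(q)$ onto $Y\setminus q(\Exc(q))$, and for $y'\notin\Exc(q)$ with $y=q(y')$ I claim $y'\in E_{nK}(q^*\beta)$ if and only if $y\in E_{nK}(\beta)$. For the direction $y\in\Amp(\beta)\Rightarrow y'\in\Amp(q^*\beta)$, fix a K\"ahler current $T_1\in\beta$ with analytic singularities, smooth on $\Amp(\beta)$ and with $T_1\ge\delta\omega$; since $q^*\{\omega\}$ is nef and big with $\Null(q^*\{\omega\})\subseteq\Exc(q)$, there is a K\"ahler current $R\in q^*\{\omega\}$ smooth off $\Exc(q)$, and then $q^*(T_1-\tfrac{\delta}{2}\omega)+\tfrac{\delta}{2}R\in q^*\beta$ is a K\"ahler current smooth near $y'$. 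For the reverse direction $y'\in\Amp(q^*\beta)\Rightarrow y\in\Amp(\beta)$, I would push a K\"ahler current $S\in q^*\beta$ smooth near $y'$ forward: $q_*S\in\beta$ is a closed positive current, smooth near $y$ (as $q$ is an isomorphism there), and it remains a K\"ahler current because the direct image of a K\"ahler current under a modification dominates a K\"ahler form.

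Granting these two facts, both inclusions of $\pi(E_{nK}(\beta))=\pi(q(E_{nK}(q^*\beta)))$ follow by a short case analysis. For $\subseteq$: take $y\in E_{nK}(\beta)$; if $y\in q(\Exc(q))$ then $\pi(y)\in X_{\sing}\subseteq\pi'(E_{nK}(q^*\beta))$ by the first fact applied to $\rho=\pi'$, while if $y\notin q(\Exc(q))$ then its unique preimage $y'$ lies in $E_{nK}(q^*\beta)$ by the second fact, so $\pi(y)=\pi'(y')$. For $\supseteq$: take $y'\in E_{nK}(q^*\beta)$; if $y'\in\Exc(q)$ then $\pi'(y')\in X_{\sing}\subseteq\pi(E_{nK}(\beta))$ by the first fact applied to $\rho=\pi$, and otherwise $q(y')\in E_{nK}(\beta)$ by the second fact, so $\pi'(y')=\pi(q(y'))\in\pi(E_{nK}(\beta))$.

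The routine reduction and the curve computation are straightforward; the essential point is the second fact, namely that the \emph{global} condition defining $\Amp$ transfers across $q$ at non-exceptional points. The delicate part there is maintaining the global K\"ahler-current property: under pullback one loses positivity along $\Exc(q)$ and must restore it with the exceptional correction $R$ coming from $q^*\{\omega\}$ being nef and big with null locus inside $\Exc(q)$, while under pushforward one must know that the direct image of a K\"ahler current under a modification is again a K\"ahler current. These two transfer statements are where I expect the main work to lie; once they are in place, the independence of $\pi(E_{nK}(\pi^*\alpha))$ from the resolution is immediate.
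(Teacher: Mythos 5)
Your proposal is correct, and its skeleton (pass to a common resolution, then compare $E_{nK}$ up and down a single modification $q$ over $X$) is the same as the paper's. The difference is in how the key transfer identity is obtained: the paper simply cites \cite[Lemma 4.3]{CT22}, which gives $E_{nK}(q^*\beta)=q^{-1}(E_{nK}(\beta))\cup\Exc(q)$ in one stroke, whereas you reprove its relevant content by hand --- the pullback direction via the exceptional correction $R\in q^*\{\omega\}$, the pushforward direction via $q_*\omega_{Y'}\ge\delta\,\omega_Y$, and the inclusion $\Exc(\rho)\subseteq E_{nK}(\rho^*\alpha)$ via the contracted-curve computation. Your route is longer but more self-contained, and it has one genuine advantage: the paper's chain of equalities $\pi_1(E_{nK}(\pi_1^*\alpha))=(\pi_1\circ\mu_1)(E_{nK}(\mu_1^*\pi_1^*\alpha))$ silently requires $(\pi_1\circ\mu_1)(\Exc(\mu_1))\subseteq\pi_1(E_{nK}(\pi_1^*\alpha))$, i.e.\ essentially that $X_{\sing}\subseteq\pi_1(E_{nK}(\pi_1^*\alpha))$ together with a choice of common resolution that is an isomorphism over $\pi_1^{-1}(X_{\reg})$; your ``first fact'' supplies exactly this missing justification. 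Two small points you should still pin down: the existence of a single K\"ahler current with analytic singularities smooth precisely on $\Amp(\beta)$ (Boucksom, \cite{Bou2}) underlies your choice of $T_1$ and of $R$, and the step $q(\Exc(q))\subseteq\pi^{-1}(X_{\sing})$ in your case analysis depends on the convention that resolutions (and the common resolution $Y_3$) are isomorphisms over $X_{\reg}$ --- worth stating explicitly, since otherwise the case $y\in q(\Exc(q))$ does not land in $X_{\sing}$.
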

\begin{proof}
Let $\pi_1:X_1\to X$ and $\pi_2:X_2\to X$ be two resolutions of $X$. We choose a common resolutions of $\pi_1$ and $\pi_2$ as follows:
\begin{center}
\begin{tikzpicture}[auto]
\node (x) at (1,0) {$X$}; \node (x1) at (0,1) {$X_1$}; \node (x2) at (2,1) {$X_2$}; \node (y) at (1,2) {$Y$};
\draw[->] (y) to node[swap] {$\mu_1$} (x1); \draw[->] (y) to node {$\mu_2$} (x2);
\draw[->] (x1) to node[swap] {$\pi_1$} (x); \draw[->] (x2) to node {$\pi_2$} (x);
\end{tikzpicture}
\end{center}
By \cite[Lemma 4.3]{CT22}, we know
$$
E_{nK}(\mu_i^*\pi_i^*\alpha)=\mu_i^{-1}(E_{nK}(\pi_i^*\alpha))\cup \Exc(\mu_i)
$$
for $i=1,2$. Since $\mu_1^*\pi_1^*\alpha=\mu_2^*\pi_2^*\alpha$, we also have $E_{nK}(\mu_1^*\pi_1^*\alpha)=E_{nK}(\mu_2^*\pi_2^*\alpha)$. Therefore we obtain
\begin{align*}
\pi_1\left(E_{nK}(\pi_1^*\alpha)\right)
&=(\pi_1\circ\mu_1)\left(E_{nK}(\mu_1^*\pi_1^*\alpha)\right)\\
&=(\pi_2\circ\mu_2)\left(E_{nK}(\mu_2^*\pi_2^*\alpha)\right)\\
&=\pi_2\left(E_{nK}(\pi_2^*\alpha)\right).
\end{align*}
\begin{defi}\label{nonKahler locus singular}
Let $X$ be a compact normal space and $\alpha\in H^{1,1}_{BC}(X,\mathbb{R})$ be a big class. The non-K\"{a}hler locus of $\alpha$ is defined as
$$
E_{nK}(\alpha):=\pi\left(E_{nK}(\pi^*\alpha)\right)
$$
where $\pi:Y\to X$ is a resolution of singularities of $X$. The ample locus of $\alpha$ is the complement of the non-K\"{a}hler locus, that is, $\Amp(\alpha):=X\setminus E_{nK}(\alpha)$.
\end{defi}

\end{proof}
\begin{defi}[\cite{Bou2}]
Let $X$ be a compact K\"{a}hler manifold and $\alpha$ be a big class on $X$. Then the non-nef locus of $\alpha$ is defined as follows:
$$
E_{nn}(\alpha):=\{x\in X \mid \nu(\alpha,x)>0\},
$$
where $\nu(\alpha,x):=\nu(T_{\min},x)$ is the minimal multiplicity of $\alpha$.
\end{defi}

\begin{defi}[c.f. \cite{CT15}]
Let $X$ be a compact K\"{a}hler manifold and $\alpha$ be a nef and big class on $X$. The null locus of $\alpha$ is defined as follows:
$$
\Null(\alpha)=\bigcup_{\int_V\alpha^{\dim(V)}=0}V. 
$$
\end{defi}
The readers can consult with \cite{CT15} for the definition of the null locus of big classes.
Collins-Tosatti showed the following:
\begin{theo}[\cite{CT15}]\label{nonKahler-null}
Let $X$ be a compact K\"{a}hler manifold. For any nef and big class $\alpha$ on $X$, the non-K\"{a}hler locus coincides with the null locus:
$$
E_{nK}(\alpha)=\Null(\alpha).
$$
\end{theo}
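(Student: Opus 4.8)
The plan is to prove the asserted equality $E_{nK}(\alpha)=\Null(\alpha)$ by establishing the two inclusions separately; the inclusion $E_{nK}(\alpha)\subseteq\Null(\alpha)$ is the substantial one and will carry essentially all of the difficulty.

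For the easy inclusion $\Null(\alpha)\subseteq E_{nK}(\alpha)$, I would argue by the contrapositive at the level of subvarieties. Let $V\subseteq X$ be an irreducible analytic subset with $p:=\dim V\ge 1$ which is \emph{not} contained in $E_{nK}(\alpha)$, so that $V$ meets the ample locus at some point $x\in V\cap\Amp(\alpha)$. By definition of $\Amp(\alpha)$ there is a Kähler current $T\in\alpha$ with $T\ge\omega$ that is smooth, hence an honest Kähler form, on a neighborhood $U$ of $x$. Since $\alpha$ is nef, the cohomological self-intersection $\int_V\alpha^{p}$ dominates the non-pluripolar mass on $V$ of any closed positive current in $\alpha$ with small unbounded locus (this is the monotonicity underlying Proposition \ref{positive product} applied on $V$). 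Restricting $T$ to the smooth points of $V$ inside $U$ contributes at least $\int_{V_{\reg}\cap U}\omega^{p}>0$ to this mass, so $\int_V\alpha^{p}>0$. Consequently every $V$ with $\int_V\alpha^{\dim V}=0$ satisfies $V\subseteq E_{nK}(\alpha)$, and taking the union gives $\Null(\alpha)\subseteq E_{nK}(\alpha)$. This direction is essentially Boucksom's description of the ample locus \cite{Bou2}.

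The hard inclusion $E_{nK}(\alpha)\subseteq\Null(\alpha)$ is equivalent to $X\setminus\Null(\alpha)\subseteq\Amp(\alpha)$, and the strategy is to produce a single Kähler current $T\in\alpha$ with analytic singularities whose singular locus is contained in $\Null(\alpha)$; any point off $\Null(\alpha)$ is then in $\Amp(\alpha)$. I would build such a current by induction on $n=\dim X$. Since $\alpha$ is big, Demailly's regularization \cite{Dem2} furnishes some Kähler current $T_0\in\alpha$ with analytic singularities along a subvariety $Z$. If $Z\subseteq\Null(\alpha)$ we are done; otherwise $Z$ has a component meeting $\Amp(\alpha)$, which by the easy direction has positive $\alpha$-volume, and the task is to modify $T_0$ so as to smooth it out there while pushing all remaining singularities onto the null locus. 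The mechanism is a mass-concentration argument: on a resolution $\pi:\widehat{X}\to X$ that blows up the relevant components of $\Null(\alpha)$, one solves a degenerate complex Monge-Ampère equation $(\pi^*\alpha+dd^c\varphi)^n=\mu$ in the sense of \cite{BEGZ}, choosing the measure $\mu$ so as to concentrate the mass of $\langle(\pi^*\alpha)^n\rangle$ onto the exceptional and null directions. Uniform $L^{\infty}$ estimates for this equation on a big class, together with the regularity theory, then force the potential to be bounded, and the current smooth, away from $\Null(\alpha)$.

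The main obstacle is exactly this mass-concentration step: guaranteeing that the singularities of the constructed current land on $\Null(\alpha)$ and nowhere else. This requires the delicate interplay between the uniform a priori estimates for the Monge-Ampère equation on a big class, sharp control of the Lelong numbers of the resulting potential, and the inductive use of resolution of singularities to dispose of the components of the null locus one at a time. The nefness of $\alpha$ enters crucially here, both to make $\langle\alpha^{n}\rangle=\alpha^{n}$ a genuine intersection number (Proposition \ref{nef positive product}) and to ensure that volumes of subvarieties are computed cohomologically, which is what ties the analytic singularities back to the algebraically defined locus $\Null(\alpha)$. I would expect the full argument to follow the line of \cite{CT15}.
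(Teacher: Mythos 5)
The paper does not prove this statement at all: it is quoted as the main theorem of Collins--Tosatti \cite{CT15} and used as a black box, so there is no internal proof to compare your attempt against. Judged on its own terms, your outline correctly reproduces the architecture of the Collins--Tosatti argument. The inclusion $\Null(\alpha)\subseteq E_{nK}(\alpha)$ does follow by restricting a Kähler current to a subvariety $V\not\subseteq E_{nK}(\alpha)$ and using that, for a nef class, the cohomological number $\int_V\alpha^{\dim V}$ dominates the Monge--Ampère mass of that restriction; and the reverse inclusion is indeed obtained by producing a Kähler current in $\alpha$ with analytic singularities contained in $\Null(\alpha)$ via the Demailly--P\u{a}un mass-concentration scheme, induction on dimension, and resolution of singularities. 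Two small points in the easy direction should be made explicit: you need $E_{nK}(\alpha)$ to be realized as the singular set of a single Kähler current with analytic singularities (Boucksom \cite{Bou2}) so that $T|_V$ is a globally well-defined closed positive current on $V$ and not merely a smooth form near one point, and you should place the point $x$ in $V_{\reg}\cap\Amp(\alpha)$, which is possible since $V_{\reg}$ is Zariski-dense in $V$.

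As a proof, however, the proposal has a genuine gap exactly where you flag it: the mass-concentration step is described, not carried out, and it is the entire content of the theorem. Nothing in the sketch explains why solving the degenerate Monge--Ampère equation of \cite{BEGZ} with mass concentrated near a positive-volume subvariety forces the resulting potential to be bounded there, how the local Demailly--P\u{a}un concentration inequality is converted into a global Kähler current with controlled singular locus, or how the induction actually disposes of the components of $\Null(\alpha)$ one at a time on a resolution. Writing ``I would expect the full argument to follow the line of \cite{CT15}'' is a legitimate move in the context of this paper, which does precisely that, but it means your text is an accurate road map of the known proof rather than a proof; the decisive analytic work remains entirely inside the citation.
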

We will use the following proposition later.
\begin{prop}\label{nef big singular exceptional}
Let $X$ be a compact normal space and $\alpha\in H^{1,1}_{BC}(X)$ be a nef and big class. Let $\pi:Y\to X$ be a resolution of singularities of $X$ and $D$ be a $\pi$-exceptional divisor with $\dim_X\pi(D)\le n-r$. Then, for any $\tau\in H^{r-k,r-k}(Y,\mathbb{R})$ with $\Supp(\tau)\subset D$ and $k>0$, we have
$$
(\pi^*\alpha)^{n-r+k}\cdot\tau=0.
$$
In particular 
$\Supp(D)\subset E_{nK}(\pi^*\alpha)$. 
\end{prop}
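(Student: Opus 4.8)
The plan is to first establish the intersection identity and then deduce $\Supp(D)\subset E_{nK}(\pi^{*}\alpha)$ from it together with the Collins--Tosatti description of the null locus (Theorem \ref{nonKahler-null}). Throughout I use that $\alpha$ nef forces $\pi^{*}\alpha$ to be nef on the smooth compact K\"ahler manifold $Y$, so every power $(\pi^{*}\alpha)^{j}$ is a well-defined cohomology class and its pairing with $\tau$ makes sense; moreover $\pi^{*}\alpha$ is big since $(\pi^{*}\alpha)^{n}=\alpha^{n}>0$.

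First I would prove the identity for a subvariety class $\tau=[W]$ with $W\subset D$ irreducible. Since $\tau$ has bidegree $(r-k,r-k)$, its complementary dimension is $n-r+k$, so $\dim W=n-r+k$. The numerical heart of the matter is that $\pi(W)\subset\pi(D)$ has dimension at most $n-r$, which by $k>0$ is strictly less than $\dim W$. Taking a resolution $\nu:\widehat{W}\to W$ and writing $g:=\pi\circ\nu:\widehat{W}\to X$, the map factors as $\widehat{W}\xrightarrow{h}Z_{W}\hookrightarrow X$ with $Z_{W}:=\pi(W)$, whence $g^{*}\alpha=h^{*}(\alpha|_{Z_{W}})$ and $(g^{*}\alpha)^{n-r+k}=h^{*}\big((\alpha|_{Z_{W}})^{n-r+k}\big)$. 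As $\alpha|_{Z_{W}}$ is a $(1,1)$-class on the at most $(n-r)$-dimensional $Z_{W}$, its power $(\alpha|_{Z_{W}})^{n-r+k}$ sits in degree $2(n-r+k)$, which exceeds twice the dimension of $Z_{W}$, so that class vanishes. Therefore $\int_{W}(\pi^{*}\alpha)^{n-r+k}=\int_{\widehat{W}}(g^{*}\alpha)^{n-r+k}=0$, giving the identity for $\tau=[W]$.

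For a general $\tau$ supported on $D$ I would pass through the projection formula: since $\pi$ is birational, $\int_{Y}\Theta=\int_{X}\pi_{*}\Theta$ for top-degree $\Theta$, and $\pi_{*}\big((\pi^{*}\alpha)^{n-r+k}\cup\tau\big)=\alpha^{n-r+k}\cup\pi_{*}\tau$. The pushforward $\pi_{*}\tau$ is supported on $Z:=\pi(D)$ of dimension at most $n-r$, while its complementary dimension is $n-r+k>n-r$; a class whose complementary dimension strictly exceeds the dimension of its support must vanish (equivalently, the relevant cohomology-with-supports, i.e. Borel--Moore, group is zero), so $\pi_{*}\tau=0$ and the intersection number is $0$. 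I expect the main obstacle to be making this passage from the subvariety computation to an arbitrary $\tau$ fully rigorous in the Bott--Chern framework over the possibly singular $X$ --- precisely the support/Borel--Moore vanishing together with the compatibility of nef products under pullback; the subvariety case itself is elementary once the dimension count is set up.

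Finally, for the ``in particular'' statement I would apply the identity componentwise. Let $D_{i}$ be an irreducible component of $D$; since $D$ is $\pi$-exceptional we have $\dim_{X}\pi(D_{i})\le n-2$, so the hypothesis holds with $r\ge2$, and choosing $\tau=[D_{i}]\in H^{1,1}(Y)$ forces $r-k=1$, i.e. $k=r-1\ge1>0$. The identity then gives $(\pi^{*}\alpha)^{n-1}\cdot[D_{i}]=\int_{D_{i}}(\pi^{*}\alpha)^{n-1}=0$. Because $\pi^{*}\alpha$ is nef and big on $Y$, Theorem \ref{nonKahler-null} gives $E_{nK}(\pi^{*}\alpha)=\Null(\pi^{*}\alpha)$, and the vanishing $\int_{D_{i}}(\pi^{*}\alpha)^{\dim D_{i}}=0$ exhibits each $D_{i}$ as one of the subvarieties in the union defining $\Null(\pi^{*}\alpha)$, so $D_{i}\subset\Null(\pi^{*}\alpha)=E_{nK}(\pi^{*}\alpha)$. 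Taking the union over all components yields $\Supp(D)\subset E_{nK}(\pi^{*}\alpha)$.
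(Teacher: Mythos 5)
Your proposal is correct and rests on the same key idea as the paper's proof: the dimension count $\dim\pi(D)\le n-r<n-r+k$, which forces the class $(\pi^*\alpha)^{n-r+k}$ to die on anything supported over $\pi(D)$, followed by Collins--Tosatti ($E_{nK}=\Null$ for nef and big classes) applied to $\tau=[D_i]$ for the ``in particular'' clause. The only difference is technical: where you push the Poincar\'e dual of $\tau$ forward to $X$ and invoke vanishing of the homology of $Z=\pi(D)$ above degree $2\dim Z$, the paper instead restricts a smooth representative $\pi^*\eta$ of $\pi^*\alpha$ to $D$ (viewed generically as the projectivized normal cone over $Z$), observes that $(\pi^*\eta)^{n-r+k}$ vanishes there since it is pulled back from the at most $(n-r)$-dimensional $Z$, and then integrates over the Poincar\'e dual of $\tau$, which is supported in $D$ --- both implementations hinge on the identical numerical fact and are equally valid.
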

\begin{proof}
Let $\eta$ be a smooth representative of $\alpha$. Then $\pi^*\eta$ is a smooth representative of $\pi^*\alpha$. We also remark that $\eta^p$ is smooth $(p,p)$-form on $X$.
Let $\mathcal{N}_{Z}$ be the normal sheaf of $Z:=\pi(D)$ in $X$. Then the restriction $\pi|_D:D\to Z$ is isomorphic to the projective normal cone $\mathbb{P}(\mathcal{N}_Z)$ over $Z$. Let $U\subset Z_{\reg}$ be a Zariski open set such that $\mathcal{N}_Z|_U$ is locally free sheaf on $U$. Then $\mathbb{P}(\mathcal{N}_Z)|_U$ and $U$ are both smooth manifolds where wedge products commute with restrictions: 
\begin{align*}
\left.(\pi^*\eta)^{n-r+k}\right|_{ \mathbb{P}(\mathcal{N}_Z)|_U}
&=\left((\pi^*\eta)|_{\mathbb{P}(\mathcal{N}_Z)|_U}\right)^{n-r+k}\\
&=\left(\pi^*(\eta|_U)\right)^{n-r+k}\\
&=\pi^*\left((\eta|_U)^{n-r+k}\right)\\
&=0,
\end{align*}
the last equality follows from $\dim_X(Z)\le n-r$ and $k>0$. Let $\varphi_{\tau}\in H_{2(n-r+k)}(Y,\mathbb{R})$ be the Poincar\'{e} dual of $\tau$.  Then $\Supp(\varphi_{\tau})\subset \Supp(D)$ since $\tau$ is supported in $D$. Hence we have 
$$
\int_{\varphi_{\tau}}(\pi^*\eta)^{n-r+k}=\int_{\varphi_{\tau}}\left(\left.(\pi^*\eta)^{n-r+k}\right|_{\mathbb{P}(\mathcal{N}_Z)|_U}\right)=0.
$$
Therefore we obtain
\begin{align*}
(\pi^*\alpha)^{n-r+k}\cdot\tau=\int_{\varphi_{\tau}}(\pi^*\eta)^{n-r+k}
=0.
\end{align*}
If $\tau=[D]$, the above equation, together with Theorem \ref{nonKahler-null}, means that $\Supp(D)\subset E_{nK}(\pi^*\alpha)$.
\end{proof}

As the consequence of Theorem \ref{nonKahler-null}, we can prove the following well-known result.
\begin{prop}\label{semiample-bimero}
Let $X$ be a compact K\"{a}hler manifold and $\alpha$ be a big and semiample class on $X$. Then there exists a bimeromorphic morphism $\pi:X\to Y$ to a compact normal K\"{a}hler space $Y$ and $\omega\in H^{1,1}_{BC}(Y,\mathbb{R})$ a K\"{a}hler class on $Y$ such that $\pi^*\omega=\alpha$. Furthermore we have
$$
E_{nK}(\alpha)=\Null(\alpha)=\Exc(\pi).
$$
\end{prop}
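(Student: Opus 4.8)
The plan is to read off the contraction $\pi$ directly from the definition of semiampleness, verify that it is bimeromorphic by a dimension count against the volume, and then match the three loci, using Theorem \ref{nonKahler-null} for one equality and a projection-formula argument for the other.

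First I would invoke Definition \ref{positive class}$(4)$ to obtain a holomorphic surjection $\pi : X \to Y$ with connected fibres onto a normal K\"{a}hler space $Y$, together with a K\"{a}hler class $\omega \in H^{1,1}_{BC}(Y,\mathbb{R})$ satisfying $\pi^*\omega = \alpha$. Since $\alpha = \pi^*\omega$ is the pullback of a K\"{a}hler class it is nef, and it is big by hypothesis; hence by Proposition \ref{nef positive product} and Proposition \ref{big volume} we have $\int_X \alpha^n = \int_X \langle\alpha^n\rangle > 0$. The key observation is that $\dim Y = n$: if $m := \dim Y < n$, then $\pi_*[X] \in H_{2n}(Y) = 0$, so the projection formula gives
$$
\int_X \alpha^n = \int_X \pi^*(\omega^n) = \langle \omega^n, \pi_*[X]\rangle = 0,
$$
contradicting bigness. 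Thus $\dim Y = \dim X = n$, and a surjective holomorphic map with connected fibres between irreducible normal spaces of equal dimension has $0$-dimensional, connected --- hence single-point --- generic fibre; therefore $\pi$ is bimeromorphic.

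For the loci, the equality $E_{nK}(\alpha) = \Null(\alpha)$ is immediate from Theorem \ref{nonKahler-null}, since $\alpha$ is nef and big on the compact K\"{a}hler manifold $X$, so it remains to prove $\Null(\alpha) = \Exc(\pi)$. The inclusion $\Exc(\pi) \subseteq \Null(\alpha)$ is easy: every positive-dimensional fibre component $F$ is contracted to a point, so $\alpha|_F = (\pi|_F)^*\omega = 0$ and $\int_F \alpha^{\dim F} = 0$, placing $F$ in $\Null(\alpha)$; as $\Exc(\pi)$ is the union of such $F$, the inclusion follows.

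The hard part will be the reverse inclusion $\Null(\alpha) \subseteq \Exc(\pi)$. Here I would take an irreducible subvariety $V$ of dimension $k > 0$ with $\int_V \alpha^k = 0$ and argue by contradiction: if $V \not\subseteq \Exc(\pi)$, then $V$ meets the isomorphism locus $X \setminus \Exc(\pi)$ in a dense Zariski-open subset, so $\pi|_V : V \to W := \pi(V)$ is bimeromorphic onto a $k$-dimensional subvariety $W \subseteq Y$ and $\pi_*[V] = [W]$. The projection formula then yields
$$
\int_V \alpha^k = \langle \pi^*(\omega^k), [V]\rangle = \langle \omega^k, \pi_*[V]\rangle = \int_W \omega^k > 0,
$$
the positivity because $\omega$ is K\"{a}hler, contradicting $\int_V \alpha^k = 0$. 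Hence $V \subseteq \Exc(\pi)$, which gives $\Null(\alpha) \subseteq \Exc(\pi)$ and completes the chain $E_{nK}(\alpha) = \Null(\alpha) = \Exc(\pi)$. The only point requiring genuine care is this degree-one computation for subvarieties that meet, but are not contained in, the exceptional locus; once $\pi|_V$ is identified as birational onto its image, the projection formula closes the argument.
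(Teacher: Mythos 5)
Your proposal is correct and follows essentially the same route as the paper: extract $\pi$ from the definition of semiampleness, identify $\Null(\alpha)$ with $\Exc(\pi)$ via the projection formula, invoke Theorem \ref{nonKahler-null} for $E_{nK}(\alpha)=\Null(\alpha)$, and conclude bimeromorphicity from the connectedness of the (generically zero-dimensional) fibres. The only cosmetic difference is that you establish $\dim Y=n$ up front by the volume count $\int_X\langle\alpha^n\rangle>0$, whereas the paper deduces properness of $\Exc(\pi)$ from the fact that the non-K\"ahler locus of a big class is a proper analytic subset; your write-up also spells out the inclusion $\Null(\alpha)\subseteq\Exc(\pi)$, which the paper only asserts.
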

\begin{proof}
Since $\alpha$ is semiample, there is a holomorphic surjection $\pi:X\to Y$ with connected fibres to a compact normal K\"{a}hler space $Y$ with a K\"{a}hler class $\omega$ on $Y$ such that $\alpha=\pi^*\omega$ by definition. We show this $\pi$ is bimeromorphic. Since $\alpha=\pi^*\omega$ and $\omega$ is K\"{a}hler, we can see the null locus of $\alpha$ coincides with the exceptional locus of $\pi$, that is,
$$
\Null(\alpha)=\bigcup_{\dim(f(V))<\dim(V)}V.
$$
Since any big and semiample class is big and nef, we have $\Null(\alpha)=E_{nK}(\alpha)$ by Theorem \ref{nonKahler-null}. Therefore, we obtain that the exceptional locus of $\pi$ coincides with the non-K\"{a}hler locus, in particular it is an proper analytic subvariety. Hence, for any $y\in Y\setminus \pi(E_{nK}(\alpha))$, its fibre $\pi^{-1}(y)$ is zero dimensional. Now we recall that any fibre of $\pi$ is connected. Thus $\pi^{-1}(y)$ consists of a point. Then we get the restriction $\pi|_{X\setminus E_{nK}(\alpha)}:X\setminus E_{nK}(\alpha)\to Y\setminus\pi(E_{nK}(\alpha))$ is bijective. Any bijective holomorphic map is biholomorphic.  Thus $\pi$ is bimeromorphic.
\end{proof}

\subsection{divisorial Zariski decomposition}\label{div.Zariski}
We have defined the positive product $\langle\alpha^p\rangle$ of a big class $\alpha$. If $p=1$, the positive product $\langle\alpha\rangle$ is described by the positive part of the divisorial Zariski decomposition:
\begin{defi}[\cite{Bou2}]
Let $X$ be a compact K\"{a}hler manifold and $\alpha$ be a big class on $X$. 
\begin{enumerate}
\item The divisorial Zariski decomposition is the decomposition as 
$$
\alpha=\langle\alpha\rangle+N(\alpha),
$$
here $N(\alpha)=\alpha-\langle\alpha\rangle$ the negative part of $\alpha$.
\item The divisorial Zariski decomposition is the Zariski decomposition if the positive part $\langle\alpha\rangle$ is nef.
\end{enumerate}
\end{defi}
\begin{rema}
The negative part $N(\alpha)$ of a big class $\alpha$ is described as
$$
N(\alpha)=\sum_{{\small{D:\hbox{irreducible divisor}}}}\nu(\alpha,D)[D],
$$
where $\nu(\alpha,D)=\nu(T_{\min},D)$ the Lelong number of $T_{\min}$ a closed positive $(1,1)$-current with minimal singularities in $\alpha$ (see \cite{Bou2}).
\end{rema}
The notion of the birational Zariski decomposition is also important. It is defined as follows:
\begin{defi}[c.f.\cite{BH14}]\label{bir.Zar.}
Let $X$ be a compact normal space and $\alpha\in H^{1,1}_{BC}(X)$ be big. We define that $\alpha$ admit a birational Zariski decomposition if there exists a resolution $\pi:Y\to X$ with $Y$ smooth K\"{a}hler such that $\pi^*\alpha$ admits the Zariski decomposition, that is, the positive part $\langle(\pi^*\alpha)\rangle$ is nef. 
\end{defi}

Let $f:Y\to X$ be a bimeromorphic morphism between compact K\"{a}hler manifolds. Let $\alpha$ be a big class on $X$ and $E$ be an effective $f$-exceptional divisor.
By \cite{DHY23}, we know that $N(f^*\alpha+E)=N(f^*\alpha)+E$. Therefore we have
\begin{lemm}\label{exc.div.prod}
Let $f:Y\to X$ be a bimeromorphic morphism between compact K\"{a}hler manifolds. Let $\alpha$ be a big class on $X$ and $E$ be an effective $f$-exceptional divisor. Then we have $\langle(f^*\alpha+E)^p\rangle=\langle (f^*\alpha)^p\rangle$.
\end{lemm}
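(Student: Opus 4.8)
The plan is to compare currents with minimal singularities directly and then invoke the definition of the positive product (Definition \ref{positive product defi}). Fix a closed positive $(1,1)$-current $T_{\min}\in f^*\alpha$ with minimal singularities, and let $[E]$ denote the integration current along $E$, so that $T_{\min}+[E]$ is a closed positive current in the class $f^*\alpha+E$. The whole statement follows once I establish two facts: (A) $T_{\min}+[E]$ has minimal singularities in $f^*\alpha+E$, and (B) $\langle(T_{\min}+[E])^p\rangle=\langle T_{\min}^p\rangle$ as closed positive $(p,p)$-currents. Granting these, Definition \ref{positive product defi} gives
\begin{align*}
\langle(f^*\alpha+E)^p\rangle=\{\langle(T_{\min}+[E])^p\rangle\}=\{\langle T_{\min}^p\rangle\}=\langle(f^*\alpha)^p\rangle.
\end{align*}

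For (A), I would first use the input $N(f^*\alpha+E)=N(f^*\alpha)+E$ supplied by \cite{DHY23}: reading off the Lelong numbers of the negative part, this yields $\nu(f^*\alpha+E,D)=\nu(f^*\alpha,D)+\mathrm{mult}_D(E)$ for every prime divisor $D$, and in particular $\nu(f^*\alpha+E,D)\ge \mathrm{mult}_D(E)$. Let $S$ be an arbitrary closed positive current in $f^*\alpha+E$. Since a minimal-singularities current realizes the infimum of Lelong numbers, $\nu(S,D)\ge\nu(f^*\alpha+E,D)\ge\mathrm{mult}_D(E)$ along each component $D$ of $E$. Applying the Siu decomposition $S=\sum_k\nu(S,D_k)[D_k]+R$ with $R\ge0$ then gives $S\ge[E]$. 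Consequently $S-[E]$ is a closed positive current in $f^*\alpha$, so by minimality of $T_{\min}$ the potential of $S-[E]$ is bounded above by that of $T_{\min}$ up to an additive constant; adding the potential of $[E]$ to both sides shows that $T_{\min}+[E]$ is less singular than $S$. As $S$ was arbitrary, (A) holds.

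For (B), I would exploit that the non-pluripolar product places no mass on pluripolar sets and is local off the polar locus (\cite{BEGZ}). Writing $[E]=\theta_E+dd^c\psi_E$ with $\theta_E$ smooth and $\psi_E$ quasi-psh, the potential $\psi_E$ is smooth on $X\setminus\Supp(E)$ and equals $-\infty$ exactly on the analytic (hence pluripolar) set $\Supp(E)$, while $[E]=0$ as a current there. Thus on $X\setminus\Supp(E)$ the currents $T_{\min}+[E]$ and $T_{\min}$ coincide, so their Bedford-Taylor products agree; since both non-pluripolar products place no mass on $\Supp(E)$, they agree globally, which is precisely (B).

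The main obstacle is the positivity step in (A), namely showing $S\ge[E]$ for every positive current $S\in f^*\alpha+E$: this is exactly where the identity $N(f^*\alpha+E)=N(f^*\alpha)+E$ from \cite{DHY23} is indispensable, since a priori an individual current could carry too little mass along the components of $E$, and only the uniform Lelong-number bound combined with the Siu decomposition rules this out. Once (A) is secured, (B) is a soft consequence of the defining properties of the non-pluripolar product, and the final equality of cohomology classes is immediate.
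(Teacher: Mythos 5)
Your proof is correct, and it uses the same two essential inputs as the paper --- the identity $N(f^*\alpha+E)=N(f^*\alpha)+E$ from \cite{DHY23} and the fact that mass carried on an analytic (pluripolar) set is invisible to the non-pluripolar product --- but it organizes them differently. The paper reads the DHY23 identity purely at the level of cohomology classes, concluding $\langle f^*\alpha+E\rangle=\langle f^*\alpha\rangle$, and then quotes Lemma \ref{positive-positive} ($\langle\beta^p\rangle=\langle\langle\beta\rangle^p\rangle$), whose proof contains the two-sided comparison $\{\langle(R+N)^p\rangle\}=\{\langle R^p\rangle\}$. You instead read the identity at the level of Lelong numbers, use the Siu decomposition to show every positive current $S\in f^*\alpha+E$ dominates $[E]$, and thereby exhibit $T_{\min}+[E]$ explicitly as a current with minimal singularities in the shifted class; your step (B) is then the same ``integration currents contribute no non-pluripolar mass'' computation that the paper hides inside Lemma \ref{positive-positive}. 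What your route buys is a self-contained, one-pass argument that also identifies the minimal-singularities representative of $f^*\alpha+E$ concretely (which is mildly stronger than the stated lemma); what the paper's route buys is brevity and the reusable intermediate statement $\langle\beta^p\rangle=\langle\langle\beta\rangle^p\rangle$. The only point worth flagging is that, as in the paper's own conventions, one should note that $T_{\min}+[E]$ still has small unbounded locus (contained in the union of that of $T_{\min}$ with $\Supp(E)$) so that the formula $\langle T^p\rangle=\mathbb{1}_{X\setminus V}T^p$ applies; this is implicit in your step (B) and is not a gap relative to the paper's level of detail.
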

We can see Lemma \ref{exc.div.prod} easily by the following lemma.
\begin{lemm}\label{positive-positive}
Let $X$ be a compact K\"{a}hler manifold and $\alpha$ be a big class on $X$. Then we have
$$
\langle\alpha^p\rangle=\langle\langle\alpha\rangle^p\rangle.
$$
\end{lemm}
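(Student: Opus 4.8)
The plan is to reduce the identity to two ingredients: that the non-pluripolar product is insensitive to the addition of an effective divisorial current, and the monotonicity of positive products under currents with minimal singularities (Proposition \ref{positive product}). Write the divisorial Zariski decomposition $\alpha=\langle\alpha\rangle+N(\alpha)$ with $N(\alpha)=\sum_D\nu(\alpha,D)[D]$ an effective $\mathbb{R}$-divisor. Recall that the positive part $\langle\alpha\rangle$ is again big (its volume equals $\mathrm{vol}(\alpha)>0$), so that $\langle\langle\alpha\rangle^p\rangle$ is well defined via Proposition \ref{positive product}. The whole argument then splits into a core local statement followed by two applications of monotonicity.

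First I would establish the core statement: for any closed positive $(1,1)$-current $T$ with small unbounded locus and any effective divisor $N$, one has $\langle(T+[N])^p\rangle=\langle T^p\rangle$ as currents. Indeed, if $V$ is the unbounded locus of $T$, then the unbounded locus of $T+[N]$ is $V\cup\Supp(N)$, and on $X\setminus(V\cup\Supp(N))$ the integration current $[N]$ vanishes, so that $(T+[N])^p=T^p$ there in the Bedford--Taylor sense. Consequently $\langle T^p\rangle-\langle(T+[N])^p\rangle=\mathbb{1}_{\Supp(N)}\langle T^p\rangle$. Since $\Supp(N)$ is an analytic, hence pluripolar, set and the non-pluripolar product of \cite{BEGZ} places no mass on pluripolar sets, this difference vanishes, which proves the claim.

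Next I would derive the two inequalities. For $\langle\alpha^p\rangle\le\langle\langle\alpha\rangle^p\rangle$, take $T_{\min}\in\alpha$ with minimal singularities. By the description of $N(\alpha)$ we have $\nu(T_{\min},D)=\nu(\alpha,D)$, so Siu's decomposition yields a closed positive current $S:=T_{\min}-[N(\alpha)]\ge 0$ lying in the class $\langle\alpha\rangle$. The core statement gives $\langle T_{\min}^p\rangle=\langle S^p\rangle$, while monotonicity (Proposition \ref{positive product}) applied in $\langle\alpha\rangle$ gives $\{\langle S^p\rangle\}\le\langle\langle\alpha\rangle^p\rangle$; hence $\langle\alpha^p\rangle=\{\langle S^p\rangle\}\le\langle\langle\alpha\rangle^p\rangle$. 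For the reverse inequality, take $S_{\min}\in\langle\alpha\rangle$ with minimal singularities; then $S_{\min}+[N(\alpha)]$ is a closed positive current in $\alpha$, the core statement gives $\langle(S_{\min}+[N(\alpha)])^p\rangle=\langle S_{\min}^p\rangle$, and monotonicity in $\alpha$ gives $\{\langle(S_{\min}+[N(\alpha)])^p\rangle\}\le\langle\alpha^p\rangle$, whence $\langle\langle\alpha\rangle^p\rangle\le\langle\alpha^p\rangle$. Combining the two inequalities proves the lemma.

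The main obstacle is the core local statement: one must argue carefully that the non-pluripolar product genuinely does not detect the divisorial part, which rests on the \cite{BEGZ} fact that these products carry no mass across pluripolar sets, together with the identification (via Siu's decomposition and the very definition of $N(\alpha)$ as the divisorial part of a minimal-singularities current) of $T_{\min}-[N(\alpha)]$ as a positive current in $\langle\alpha\rangle$. Once this is secured, the remainder is a formal consequence of monotonicity and requires no further analysis.
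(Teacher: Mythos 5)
Your proposal is correct and follows essentially the same route as the paper: both proofs use the Siu decomposition $T_{\min}=S+[N(\alpha)]$ of a minimal-singularities current, the fact that non-pluripolar products place no mass on the divisorial (pluripolar) part, and the monotonicity of Proposition \ref{positive product} applied once in the class $\langle\alpha\rangle$ and once in $\alpha$ to get the two inequalities. Your explicit isolation of the ``core statement'' $\langle(T+[N])^p\rangle=\langle T^p\rangle$ merely makes precise what the paper states tersely, so there is nothing substantive to add.
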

\begin{proof}
Let $T_{\min}$ be a closed positive current with minimal singularities in $\alpha$. Then the difference between $\langle T_{\min}^p\rangle$ and $\langle \langle T_{\min}\rangle^p\rangle$ put mass only on an analytic subset. Since  both of them put no mass on analytic subsets, we obtain $\langle T_{\min}^p\rangle=\langle\langle T_{\min}\rangle^p\rangle$ and thus $\langle\alpha^p\rangle=\{\langle\langle T_{\min}\rangle^p\rangle\}\le \langle\langle\alpha\rangle^p\rangle$ (recall \cite[Theorem 1.16]{BEGZ}). 
For the inverse inequality, we recall the Siu decomposition (c.f. \cite{Bou2}) of $T_{\min}$, that is, $T_{\min}=\langle T_{\min}\rangle +N$ where $N=\sum\nu(\alpha,D)[D]$.
Let $R \in \langle\alpha\rangle$ be a closed positive current with minimal singularities.  Since $R+N \in \alpha$, we have $\{\langle(R+N)^p\rangle\}\le \langle \alpha^p\rangle$. Since $N$ is a sum of integral currents with positive coefficients, we have $\{\langle(R+N)^p\rangle\}=\{\langle R^p\rangle\}=\langle\langle\alpha\rangle^p\rangle$. 
\end{proof}

Let $f : Y \dashrightarrow X$ be a bimeromorphic map between compact normal varieties. We say $f$ is surjective in codimension 1 if the induced map $f_{*}: Z^{1}(Y)\rightarrow Z^{1}(X)$ is surjective. Here $Z^{1}(X)$ is the group of Weil divisors on $X$.
\begin{defi}
   A $bimeromorphic$ $contraction$ is a bimeromorphic map $f:Y\dashrightarrow X$ between compact normal analytic spaces which satisfies the following conditions:
   \begin{enumerate}
       \item $f: Y \dashrightarrow X$ is surjective in codimension 1.
       \item $f^{-1}:X \dashrightarrow Y$ does not contract divisors.
   \end{enumerate}
\end{defi}
\begin{defi}[\cite{DHY23}]\label{beta-negative def}
    Let $f:Y\dashrightarrow X$ be a bimeromorphic contraction between compact normal spaces. Let $\beta$ and $\alpha$ be pseudo-effective classes on $Y$ and $X$ respectively. Assume $\alpha=f_*\beta$. We say $f$ is \it{$\beta$-negative} if there exists a resolution $q:Z \rightarrow Y$ and $p:Z:\rightarrow X$ of $f$ and $E$ an effective $p$-exceptional divisor such that $q^*\beta=p^*\alpha+[E]$ holds and $\Supp(q_*E)$ coincides with the support of the $f$-exceptional divisors.
\end{defi}
\begin{exam}
Let $X$ be a smooth projective variety with $K_X$ big. Then, there exists a birational map $\pi: X \dashrightarrow X_{\can}$ to the canonical model $X_{\can}$ given by MMP \cite{BCHM}. Then this $\pi$ is $c_1(K_X)$-negative in the sense of Definition \ref{beta-negative def}.
\end{exam}
\begin{lemm}\label{surj-codim1-product}
    Let $f: Y \dashrightarrow X$ be a bimeromorphic map between compact normal spaces. Let $\beta$ and $\alpha$ be big classes on $Y$ and $X$, respectively. Suppose $f$ is $\beta$-negative, then
    \begin{equation}
        \langle (q^{*}\beta)^{k}\rangle=\langle (p^{*}\alpha)^{k}\rangle 
    \end{equation}
    holds for any $k=1,\ldots,n$, where $q:Z\to Y$ and $p: Z\to X$ are as in Definition \ref{beta-negative def}.
\end{lemm}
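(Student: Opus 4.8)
The plan is to reduce the statement to Lemma \ref{exc.div.prod} by exploiting the defining relation of a $\beta$-negative map. First I would recall that, by Definition \ref{beta-negative def}, the resolution $q:Z\to Y$ and $p:Z\to X$ together with the effective $p$-exceptional divisor $E$ satisfy the key identity
$$
q^{*}\beta=p^{*}\alpha+[E]
$$
in $H^{1,1}(Z,\mathbb{R})$. Since $Z$ is a compact K\"{a}hler manifold and $\alpha,\beta$ are big (on $X$ and $Y$ respectively), Definition \ref{big normal} guarantees that $p^{*}\alpha$ and $q^{*}\beta$ are big classes on $Z$; hence all the positive products $\langle(q^{*}\beta)^{k}\rangle$ and $\langle(p^{*}\alpha)^{k}\rangle$ are well-defined classes on $Z$ in the sense of Definition \ref{positive product defi}.

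The heart of the argument is then the observation that $q^{*}\beta$ and $p^{*}\alpha$ differ precisely by the effective $p$-exceptional divisor $[E]$, which is exactly the situation governed by Lemma \ref{exc.div.prod} (applied to the bimeromorphic morphism $p:Z\to X$, the big class $\alpha$, and the exceptional divisor $E$). Concretely, I would first invoke the divisorial Zariski decomposition identity $N(p^{*}\alpha+[E])=N(p^{*}\alpha)+[E]$ coming from \cite{DHY23}, which yields the equality of positive parts
$$
\langle q^{*}\beta\rangle=q^{*}\beta-N(q^{*}\beta)=(p^{*}\alpha+[E])-(N(p^{*}\alpha)+[E])=\langle p^{*}\alpha\rangle.
$$
Feeding this into Lemma \ref{positive-positive}, which computes the higher positive products through the positive part alone, gives
$$
\langle(q^{*}\beta)^{k}\rangle=\langle\langle q^{*}\beta\rangle^{k}\rangle=\langle\langle p^{*}\alpha\rangle^{k}\rangle=\langle(p^{*}\alpha)^{k}\rangle
$$
for every $k=1,\ldots,n$, which is the assertion.

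The one point that deserves care — and which I expect to be the main, if modest, obstacle — is that Lemma \ref{exc.div.prod} and the \cite{DHY23} identity are formulated for bimeromorphic morphisms between compact K\"{a}hler manifolds, whereas here the target $X$ is only a normal space. I would address this by noting that all the relevant objects — the classes $q^{*}\beta$, $p^{*}\alpha$, $[E]$, their negative parts, and all positive products — live entirely on the smooth K\"{a}hler manifold $Z$, and that the only property of $E$ actually used in the proof of the negative-part identity is that $E$ is effective and $p$-exceptional, i.e. that its components are contracted by $p$. Since this is unaffected by the singularities of $X$, the argument of Lemma \ref{exc.div.prod} applies verbatim, which completes the proof.
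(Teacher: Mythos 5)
Your proof is correct and follows essentially the same route as the paper: the paper's own proof is a one-line reduction to Lemma \ref{exc.div.prod}, whose proof is precisely the combination of the \cite{DHY23} identity $N(p^{*}\alpha+[E])=N(p^{*}\alpha)+[E]$ with Lemma \ref{positive-positive} that you spell out. Your closing remark that all relevant objects live on the smooth K\"{a}hler manifold $Z$, so the singularities of $X$ are harmless, is a sensible clarification that the paper leaves implicit.
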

\begin{proof}
   Since $q^*\beta=p^*\alpha+E$ and $E$ are $p$-exceptional, this is a consequence of Lemma \ref{exc.div.prod}.

\end{proof}

Then we obtain the following important observation.
\begin{exam}
Let $X$ be a smooth projective variety of general type, that is, the canonical line bundle $K_X$ is big. Then, by \cite{BCHM}, $X$ has the canonical model, that is, a normal projective variety with canonical singularities $X_{\can}$ with $K_{X_{\can}}$ ample and a birational map $\pi:X\dashrightarrow X_{\can}$. Moreover, there exists a modification $p:Z\to X$ and $q:Z\to X_{\can}$ and effective $q$-exceptional divisor $E$ on $Z$ such that $p^*K_X=q^*K_{X_{\can}}+E$.
\begin{center}
\begin{tikzpicture}[auto]
\node (x) at (0,0) {$X$}; \node (x1) at (3,0) {$X_{\can}$}; \node (z) at (1.5,1.5) {$Z$};
\draw[->, dashed] (x) to node {$\pi$} (x1); \draw[->] (z) to node[swap] {$p$} (x); \draw[->] (z) to node {$q$} (x1);
\end{tikzpicture}
\end{center}
Then, by Lemma \ref{surj-codim1-product}, we have 
$$
\langle c_1(p^*K_X)^k\rangle=c_1(q^*K_{X_{\can}})^k.
$$
\end{exam}

\section{Assumption}\label{Assumption}
In this paper, we frequently need the following assumption:
\begin{assu}\label{assumption}
Let $\pi:Y\to X$ be a bimeromorphic morphism between compact K\"{a}hler manifolds. Let $\alpha$ be a big class on $X$ and $D$ be a $\pi$-exceptional divisor. Then we assume that the following holds:
$$
\langle(\pi^*\alpha)^{n-1}\rangle\cdot[D]=0.
$$
\end{assu}
This assumption is closely related to the following conjecture (c.f. \cite{Nystr19}).
\begin{conj}\label{conjecture}
Let $X$ be a compact K\"{a}hler manifold. Let $\alpha$ be a big class. Then the following {\rm{(1)}} and {\rm{(2)}} are conjectured to hold:
\begin{enumerate}
\item  Let $\gamma\in H^{1,1}(X,\mathbb{R})$ be any cohomology class. Then the folowing holds:
$$
\left.\frac{d}{dt}\right|_{t=0}\langle(\alpha+t\gamma)^n\rangle=n\gamma\cdot\langle\alpha^{n-1}\rangle.
$$

\item Let $\alpha=\langle\alpha\rangle+N(\alpha)$ be the divisorial Zariski decomposition of $\alpha$. Then the orthogonality relation $\langle\alpha^{n-1}\rangle\cdot N(\alpha)=0$ holds.
\end{enumerate}
\end{conj}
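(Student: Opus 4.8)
The plan is to prove that the two parts of Conjecture \ref{conjecture} are equivalent, thereby reducing the whole statement to a single orthogonality-type assertion, and then to attack that assertion through the variational description of the volume $\alpha\mapsto\langle\alpha^n\rangle$ coming from \cite{BEGZ}. The elementary direction is (1) $\Rightarrow$ (2). Write the divisorial Zariski decomposition $\alpha=\langle\alpha\rangle+N(\alpha)$ with $N:=N(\alpha)=\sum_D\nu(\alpha,D)[D]$. By the structure of the decomposition \cite{Bou2}, for every $t\in[0,1]$ the big class $\langle\alpha\rangle+tN$ has positive part $\langle\alpha\rangle$ and negative part $tN$; hence Lemma \ref{positive-positive} gives $\langle(\langle\alpha\rangle+tN)^p\rangle=\langle\langle\alpha\rangle^p\rangle=\langle\alpha^p\rangle$ for all $p$. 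In particular the function $t\mapsto\langle(\langle\alpha\rangle+tN)^n\rangle$ is constant on $[0,1]$, so its derivative vanishes in the interior; applying part (1) at an interior point $t$ in the direction $\gamma=N$, and using $\langle(\langle\alpha\rangle+tN)^{n-1}\rangle=\langle\alpha^{n-1}\rangle$, yields $0=n\,N\cdot\langle\alpha^{n-1}\rangle$, which is exactly part (2). The reverse implication (2) $\Rightarrow$ (1) is the substantial half and follows the scheme of Boucksom--Favre--Jonsson: the orthogonality relation must be upgraded to a quantitative \emph{orthogonality estimate}, which then forces the left and right directional derivatives of the volume to coincide with $n\gamma\cdot\langle\alpha^{n-1}\rangle$.

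To produce the derivative in part (1) directly I would proceed as follows. The big cone is open, so $\alpha+t\gamma$ is big for $|t|$ small, and by the Brunn--Minkowski / Khovanskii--Teissier inequalities for positive products \cite{BEGZ,BDPP} the map $t\mapsto\langle(\alpha+t\gamma)^n\rangle^{1/n}$ is concave; concavity already guarantees that the one-sided derivatives at $t=0$ exist. To identify them, realize $\langle\alpha^n\rangle=\int_X\langle(\theta+dd^c\varphi_{\min})^n\rangle$ via the non-pluripolar Monge--Ampère of the extremal (equilibrium) potential $\varphi_{\min}$ of a smooth form $\theta\in\alpha$, and perturb $\theta\rightsquigarrow\theta+t\theta_\gamma$ with $\theta_\gamma\in\gamma$. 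By the variational (free-boundary) principle for the envelope, the first-order variation of $\varphi_{\min}$ does not contribute to the variation of the Monge--Ampère energy, so only the explicit dependence on $t$ survives and one obtains $\frac{d}{dt}\big|_{0}\langle(\alpha+t\gamma)^n\rangle=n\int_X\theta_\gamma\wedge\langle(\theta+dd^c\varphi_{\min})^{n-1}\rangle=n\,\gamma\cdot\langle\alpha^{n-1}\rangle$.

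The main obstacle is precisely the stationarity of the envelope, equivalently the orthogonality relation, in the \emph{transcendental} Kähler setting. In the projective case one has Fujita approximation: modifications $\pi:X'\to X$ with $\pi^*\alpha=P+N'$, $P$ nef and $N'$ effective, satisfying $P^n\to\langle\alpha^n\rangle$, together with the BDPP/BFJ estimate bounding $P^{n-1}\cdot N'$ by a power of the defect $\langle\alpha^n\rangle-P^n$, which tends to $0$; this is exactly the input used in \cite{Nystr19} to establish Assumption \ref{assumption intro}. For a genuinely transcendental big class there is no literal Fujita approximation, and it must be replaced by Demailly regularization of Kähler currents. Securing the analogous defect estimate — controlling the mass that such regularizations lose along the negative part, for instance through transcendental holomorphic Morse inequalities and the convergence properties of the regularized potentials — in the uniform form needed to conclude $P^{n-1}\cdot N'\to 0$ is the crux. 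This is the step I expect to resist a routine argument, and it is the reason the statement is formulated as a conjecture rather than a theorem.
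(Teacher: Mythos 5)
The statement you were asked to prove is labelled a \emph{conjecture} in the paper, and the paper offers no proof of it: it only records the two cases that are actually known and used later, namely the projective case (Witt Nystr\"om, quoted as Theorem \ref{nystr19}) and the case where $\alpha$ admits a Zariski decomposition (Collins--Tosatti, quoted as Theorem \ref{thm of CT}). So there is no in-paper argument to compare yours against, and your closing assessment --- that the transcendental case resists a routine argument and that this is precisely why the statement is a conjecture --- is the correct reading of the situation.

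Within that caveat, your partial results are sound. The implication $(1)\Rightarrow(2)$ is correct: the identification $N(\langle\alpha\rangle+tN)=tN$ for $t\in[0,1]$ follows from convexity and homogeneity of $\beta\mapsto N(\beta)$ applied to $\langle\alpha\rangle+tN=(1-t)\langle\alpha\rangle+t\alpha$ together with $\alpha=(\langle\alpha\rangle+tN)+(1-t)N$, and then Lemma \ref{positive-positive} makes $t\mapsto\langle(\langle\alpha\rangle+tN)^n\rangle$ constant, so differentiating in the direction $\gamma=N$ gives the orthogonality relation. Two points deserve flagging. First, your opening claim that the two parts are \emph{equivalent} is stronger than what you establish: you only prove $(1)\Rightarrow(2)$, and as you yourself note, the Boucksom--Favre--Jonsson route to $(1)$ needs a \emph{quantitative} orthogonality estimate for Fujita-type approximations, not merely the single qualitative relation $\langle\alpha^{n-1}\rangle\cdot N(\alpha)=0$ of part $(2)$; so the ``reduction to a single orthogonality assertion'' should not be asserted. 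Second, in your direct variational attack on $(1)$, the sentence ``the first-order variation of $\varphi_{\min}$ does not contribute'' is not a lemma you can invoke --- it is essentially a restatement of the differentiability of the volume, i.e.\ of part $(1)$ itself, and justifying it in the transcendental K\"ahler setting is exactly the open point (compare the preprint of Vu cited in the bibliography). For the purposes of this paper none of this is needed: the conjecture serves only to motivate Assumption \ref{assumption}, and every place the paper uses it falls under Theorem \ref{nystr19} or Theorem \ref{thm of CT}.
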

Witt Nystrom, in  \cite[Theorem D]{Nystr19}), proved this Conjecture \ref{conjecture} on projective manifolds and showed that the Assumption \ref{assumption} holds on projective manifolds:
\begin{theo}[\cite{Nystr19}]\label{nystr19}
If $X$ is smooth projective, then Conjecture \ref{conjecture} holds.
\end{theo}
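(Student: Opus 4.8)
The plan is to prove the two assertions of Conjecture \ref{conjecture} separately, establishing the differentiability statement (1) first and then deducing the orthogonality relation (2) from it. Throughout I write $\mathrm{vol}(\alpha):=\langle\alpha^n\rangle$ for the volume, and I use freely that on a projective manifold the positive product $\langle\alpha^{n-1}\rangle$ of a big class coincides with the movable intersection product obtained by Fujita approximation (Principato \cite{Mario}): there are modifications $\mu_k:X_k\to X$ and decompositions $\mu_k^*\alpha=A_k+E_k$ with $A_k$ ample, $E_k$ effective, $A_k^n\to\mathrm{vol}(\alpha)$ and $(\mu_k)_*(A_k^{n-1})\to\langle\alpha^{n-1}\rangle$ in $H^{n-1,n-1}(X,\mathbb{R})$; projectivity is exactly what guarantees such \emph{ample} approximations. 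I also use that $\mathrm{vol}$ is a bimeromorphic invariant ($\mathrm{vol}(\mu^*\theta)=\mathrm{vol}(\theta)$) and is monotone with respect to the pseudoeffective order, and that $u\mapsto\langle(\alpha+u\gamma)^{n-1}\rangle$ is continuous on the big cone.

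First I would establish the easy half of (1), a one-sided lower bound for nef directions. Fix a big class $\alpha$ and a nef class $\gamma$. For a Fujita approximation as above and any $s>0$, the class $A_k+s\mu_k^*\gamma$ is ample, hence nef and big, so by Proposition \ref{nef positive product} its volume is the top self-intersection; expanding and discarding the nonnegative mixed products of nef classes gives $(A_k+s\mu_k^*\gamma)^n\ge A_k^n+ns\,A_k^{n-1}\cdot\mu_k^*\gamma$. Since $\mu_k^*(\alpha+s\gamma)=A_k+E_k+s\mu_k^*\gamma\ge A_k+s\mu_k^*\gamma$ with $E_k$ effective, monotonicity and birational invariance of the volume yield $\langle(\alpha+s\gamma)^n\rangle\ge A_k^n+ns\,A_k^{n-1}\cdot\mu_k^*\gamma$. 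Letting $k\to\infty$ and applying the projection formula gives, for every $s>0$ and every nef $\gamma$, the inequality $\langle(\alpha+s\gamma)^n\rangle\ge \mathrm{vol}(\alpha)+ns\,\gamma\cdot\langle\alpha^{n-1}\rangle$, whence $\liminf_{s\to0^+}\tfrac1s\bigl(\langle(\alpha+s\gamma)^n\rangle-\mathrm{vol}(\alpha)\bigr)\ge n\,\gamma\cdot\langle\alpha^{n-1}\rangle$.

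The matching upper bound — equivalently the genuine differentiability of $\mathrm{vol}$ on the big cone together with the identification of its gradient — is the crux, and is where projectivity is indispensable. Following Lazarsfeld--Musta\c{t}\u{a} and Boucksom--Favre--Jonsson, I would attach to a big $\mathbb{Q}$-divisor class its Okounkov body with respect to an admissible flag, read off both the volume and its partial derivatives from these convex bodies (via restricted volumes), and invoke concavity of $\mathrm{vol}^{1/n}$ (Khovanskii--Teissier) to conclude that $\mathrm{vol}$ is $C^1$ on the big cone with derivative equal to the Fujita-approximation positive intersection product; by \cite{Mario} this product is the class $n\langle\alpha^{n-1}\rangle$. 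Extending from $\mathbb{Q}$-classes to arbitrary big $\mathbb{R}$-classes by homogeneity and continuity of both sides then yields (1). I expect this upper bound — producing enough global sections/valuations to control the volume \emph{from above} — to be the main obstacle; it has no counterpart in the general K\"ahler setting, which is precisely why the conjecture remains open there.

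Finally I would deduce (2) from (1). By Lemma \ref{positive-positive} we have $\langle\langle\alpha\rangle^{n-1}\rangle=\langle\alpha^{n-1}\rangle$ and $\mathrm{vol}(\langle\alpha\rangle)=\mathrm{vol}(\alpha)>0$, so $\langle\alpha\rangle$ is big by Proposition \ref{big volume}. Consider $h(t):=\langle(\langle\alpha\rangle+tN(\alpha))^n\rangle$ for $t\in[0,1]$. Since $N(\alpha)=\sum\nu(\alpha,D)[D]$ is effective, we have $\langle\alpha\rangle\le \langle\alpha\rangle+tN(\alpha)\le\alpha$, so monotonicity of the volume forces $\mathrm{vol}(\alpha)=\mathrm{vol}(\langle\alpha\rangle)\le h(t)\le\mathrm{vol}(\alpha)$; thus $h\equiv\mathrm{vol}(\alpha)$ is constant and $h'(0)=0$. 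On the other hand, applying (1) at the big base class $\langle\alpha\rangle$ in the direction $N(\alpha)$ gives $h'(0)=n\,N(\alpha)\cdot\langle\langle\alpha\rangle^{n-1}\rangle=n\,N(\alpha)\cdot\langle\alpha^{n-1}\rangle$. Comparing the two expressions yields $\langle\alpha^{n-1}\rangle\cdot N(\alpha)=0$, which is the orthogonality relation (2). This completes the plan, with the projective Okounkov-body argument of the third paragraph being the single substantive step.
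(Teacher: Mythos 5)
The paper offers no proof of this statement: it is quoted directly from Witt Nystr\"om \cite{Nystr19} (Theorem D there), so there is no internal argument to compare against, and your proposal must be judged on its own merits. Its global architecture is the natural one --- an easy Fujita-approximation lower bound in nef directions, then full differentiability of the volume, then the standard deduction of orthogonality by differentiating the constant function $t\mapsto\langle(\langle\alpha\rangle+tN(\alpha))^n\rangle$ --- and the first and third steps are essentially correct. (One small inaccuracy in the first step: Fujita-type approximations $\mu_k^*\alpha=A_k+E_k$ with $A_k$ K\"ahler and $A_k^n\to\langle\alpha^n\rangle$ exist on any compact K\"ahler manifold via Demailly regularization; projectivity is not what makes that step work, and the $A_k$ need not be ample $\mathbb{Q}$-divisor classes when $\alpha$ is transcendental.)

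The genuine gap is in the middle step, which you yourself flag as the single substantive one. Conjecture \ref{conjecture} concerns arbitrary big classes $\alpha\in H^{1,1}(X,\mathbb{R})$ and arbitrary directions $\gamma\in H^{1,1}(X,\mathbb{R})$; on a projective manifold these are in general transcendental, i.e.\ they need not lie in the N\'eron--Severi space $NS(X)_{\mathbb{R}}$. Okounkov bodies and the Boucksom--Favre--Jonsson differentiability theorem \cite{BFJ} are intrinsically tied to divisor classes --- they require linear series and valuations of sections --- so they yield statement (1) only for $\alpha,\gamma\in NS(X)_{\mathbb{R}}$. Your proposed bridge, ``extending from $\mathbb{Q}$-classes to arbitrary big $\mathbb{R}$-classes by homogeneity and continuity of both sides,'' does not close this gap: continuity of $\mathrm{vol}$ and of $\langle\,\cdot\,^{n-1}\rangle$ on the big cone does not upgrade differentiability along a (generally proper) subspace of $H^{1,1}(X,\mathbb{R})$ to differentiability in all directions, and a transcendental class is not a limit of rational classes by ``homogeneity.'' Handling transcendental classes on projective manifolds is precisely the new content of \cite{Nystr19} and the reason \cite{BFJ} did not already settle the matter; Witt Nystr\"om's argument is of a different, analytic nature and does not proceed via Okounkov bodies. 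As written, your proof establishes the theorem only for classes in $NS(X)_{\mathbb{R}}$.
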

On compact K\"{a}hler manifolds, Collins-Tosatti showed that 
\begin{theo}[\cite{CT15}]\label{thm of CT}
Let $X$ be a compact K\"{a}hler manifold and $\alpha$ be a big class on $X$. If $\alpha$ admits the Zariski decomposition, then Conjecture \ref{conjecture}(2) holds.
\end{theo}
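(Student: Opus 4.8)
The plan is to convert the orthogonality relation into a statement about the volume, and then exploit the fact that, when the positive part is nef, the volume does not change along the segment from $\langle\alpha\rangle$ to $\alpha$. Write $\beta:=\langle\alpha\rangle$ and $N:=N(\alpha)=\alpha-\beta\geq 0$, so that $\alpha=\beta+N$ is the divisorial Zariski decomposition and $\beta$ is nef by hypothesis. First I would record that $\beta$ is also big: by Lemma \ref{positive-positive} and Proposition \ref{nef positive product}, $\langle\beta^{n}\rangle=\langle\langle\alpha\rangle^{n}\rangle=\langle\alpha^{n}\rangle>0$, so $\beta$ is big by Proposition \ref{big volume}. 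The same two results reduce the target, since for the nef and big class $\beta$ we have
$$
\langle\alpha^{n-1}\rangle\cdot N(\alpha)=\langle\langle\alpha\rangle^{n-1}\rangle\cdot N=\beta^{n-1}\cdot N,
$$
so it suffices to prove $\beta^{n-1}\cdot N=0$.

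Next I would introduce $g(t):=\langle(\beta+tN)^{n}\rangle$ for $t\in[0,1]$ and squeeze it between two bounds. On one hand, the class $\beta+tN$ increases as $t$ increases (the difference is a nonnegative multiple of the effective class $N$), so the monotonicity of the volume (\cite{BEGZ}, as used in the proof of Lemma \ref{positive-positive}) shows $g$ is nondecreasing; combined with $g(0)=\langle\beta^{n}\rangle=\langle\alpha^{n}\rangle=g(1)$ (Lemma \ref{positive-positive}, together with $\beta+N=\alpha$), this forces $g(t)\leq g(1)=\langle\alpha^{n}\rangle=\beta^{n}$ for all $t\in[0,1]$. On the other hand, the first-order lower bound for the volume, $\langle(\beta+\gamma)^{n}\rangle\geq\langle\beta^{n}\rangle+n\langle\beta^{n-1}\rangle\cdot\gamma$ for pseudo-effective $\gamma$, applied with $\gamma=tN$ and using $\langle\beta^{n-1}\rangle=\beta^{n-1}$, gives $g(t)\geq\beta^{n}+nt\,\beta^{n-1}\cdot N$. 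Comparing the two estimates for $t\in(0,1]$ yields $\beta^{n-1}\cdot N\leq 0$; since $\beta$ is nef and $N$ is effective we also have $\beta^{n-1}\cdot N\geq 0$, hence $\beta^{n-1}\cdot N=0$, as desired.

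The main obstacle is the first-order lower bound $\langle(\beta+\gamma)^{n}\rangle\geq\langle\beta^{n}\rangle+n\langle\beta^{n-1}\rangle\cdot\gamma$: this is the one place where genuine pluripotential input enters, and it must be available in the transcendental (non-projective) setting. It is the unconditional, ``easy'' half of the differentiability of the volume; the reverse first-order estimate, which for a general big class is equivalent to Conjecture \ref{conjecture}(2) and is exactly what we are trying to prove, is here supplied for free by the collapse $g(0)=g(1)$ that nefness of $\beta$ produces. I would cite this inequality from \cite{BDPP} in its transcendental form via \cite{BEGZ}. As an alternative route that bypasses it, one can argue geometrically: the relation $\beta^{n-1}\cdot N=0$ amounts, as above, to $\beta^{n-1}\cdot[D_i]=0$ for each prime divisor $D_i$ in $\Supp(N)$, and since $\beta$ is nef and big, Theorem \ref{nonKahler-null} identifies $\Null(\beta)=E_{nK}(\beta)$; the crux then becomes showing $D_i\subseteq E_{nK}(\beta)$, after which $D_i$, being an irreducible divisor contained in $\Null(\beta)$, must be a top-dimensional component and hence satisfies $\int_{D_i}\beta^{n-1}=0$ (it cannot sit inside a lower-dimensional $V$, and $V=X$ is excluded by bigness). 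In either approach the decisive feature is that nefness of the positive part removes precisely the obstruction that leaves Conjecture \ref{conjecture}(2) open for general big classes.
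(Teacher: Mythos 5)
First, a remark on the comparison itself: the paper does not prove this statement --- it is quoted verbatim from \cite{CT15} --- so the only internal check available is whether your argument is actually a proof. Your reductions are fine: $\beta:=\langle\alpha\rangle$ is nef and big, $\langle\alpha^{n-1}\rangle\cdot N=\beta^{n-1}\cdot N$ by Lemma \ref{positive-positive} and Proposition \ref{nef positive product}, $g(t):=\langle(\beta+tN)^{n}\rangle$ is constant equal to $\beta^{n}$ on $[0,1]$, and $\beta^{n-1}\cdot N\ge 0$. The decisive step, however, is not. The inequality $\langle(\beta+tN)^{n}\rangle\ge\langle\beta^{n}\rangle+nt\,\langle\beta^{n-1}\rangle\cdot N$ is false as an exact inequality: on the blow-up of $\mathbb{P}^{2}$ at a point, with $\beta=2H-E$ ample and $N=E$ the exceptional curve, $\beta+tE=2H-(1-t)E$ is nef for $t\in[0,1]$, so $\langle(\beta+tE)^{2}\rangle=4-(1-t)^{2}=3+2t-t^{2}<3+2t=\beta^{2}+2t\,\beta\cdot E$. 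What super-additivity of positive products (\cite{BDPP}, \cite{BEGZ}) actually gives is $\langle(\beta+tN)^{n}\rangle\ge\langle\beta^{n}\rangle+nt\,\langle\beta^{n-1}\cdot N\rangle$ with a \emph{positive product} in the last term; since $N=N(\alpha)$ is exceptional, $\langle N\rangle=0$, that term vanishes, and the ``easy half'' yields only the trivial bound $g(t)\ge g(0)$. The asymptotic version $g(t)\ge g(0)+nt\,\beta^{n-1}\cdot N+o(t)$ is exactly Conjecture \ref{conjecture}(1) evaluated at $\beta$ in the direction $N$, and because $g$ is constant it is \emph{equivalent} to $\beta^{n-1}\cdot N\le 0$, i.e.\ to the statement you are proving. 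You have the two halves of differentiability backwards: the collapse $g(0)=g(1)$ hands you the upper bound on the derivative for free, and it is the lower bound that carries all the content --- in \cite{BFJ} and \cite{Nystr19} this is precisely where projectivity enters, via the orthogonality estimate. The main route is therefore circular.

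The alternative route is the same gap in different clothing. For an irreducible divisor $D_{i}$ and the nef and big class $\beta$, the only subvarieties of $\Null(\beta)$ that could contain $D_{i}$ are $D_{i}$ itself or $X$, and $X$ is excluded by bigness; hence, via Theorem \ref{nonKahler-null}, the assertion $D_{i}\subseteq E_{nK}(\beta)=\Null(\beta)$ is \emph{equivalent} to $\beta^{n-1}\cdot D_{i}=0$. What comes for free is only $D_{i}\subseteq E_{nn}(\alpha)\subseteq E_{nK}(\alpha)$; the passage from the non-K\"ahler locus of $\alpha$ to that of its positive part $\beta$ is the entire theorem, and it is exactly the step Collins--Tosatti close with their mass-concentration argument. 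You correctly label it ``the crux'' but supply no argument for it, so neither route constitutes a proof.
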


\section{Slope stability with respect to Big Classes}\label{stability 0}
The aim of this section is to define the slope stability of reflexive sheaves with respect to a big class on a compact normal space.
\subsection{$\langle\alpha^{n-1}\rangle$-slope stability on smooth manifolds}
Let $X$ be a compact complex manifold of dimension $n$ and $\alpha$ be a big class on $X$.
\begin{defi}\label{stability defi}
Let $\mathcal{E}$ be a torsion free coherent sheaf on $X$. The $\langle\alpha^{n-1}\rangle$-degree and $\langle\alpha^{n-1}\rangle$-slope of $\mathcal{E}$ are defined as follows respectively,
\begin{equation}
\deg_{\alpha}(\mathcal{E}) := \int_{X} c_{1}(\det(\mathcal{E})) \wedge \frac{\langle \alpha^{n-1} \rangle}{(n-1)!}, \text{  } \mu_{\alpha}(\mathcal{E}) := \frac{\deg_{\alpha}(\mathcal{E})}{\rk(\mathcal{E})}.
\end{equation}
\end{defi} 
\begin{defi}\label{stability defi2}
Let $X$ be a compact complex manifold and $\alpha$ be a big class on $X$. A torsion free sheaf $\mathcal{E}$ on $X$ is $\langle\alpha^{n-1}\rangle$-slope stable if  the following holds : For any torsion free subsheaf $\mathcal{F} \subset \mathcal{E}$ of $0 < rk(\mathcal{F}) < rk(\mathcal{E})$ with torsion free quotient $\mathcal{E}/\mathcal{F}$, the following inequality of $\alpha$-slope holds,
\begin{equation}
\mu_{\alpha}(\mathcal{F}) < \mu_{\alpha}(\mathcal{E}).
\end{equation}
\end{defi}
\begin{rema}
Let $\mathcal{E}$ be a torsion free sheaf on a compact complex manifold of dimension $n$ and $\alpha$ be a big class on $X$. Then $\mathcal{E}$ is $\langle\alpha^{n-1}\rangle$-slope stable if and only if the reflexive hull $\mathcal{E}^{**}$ is $\langle\alpha^{n-1}\rangle$-slope stable \cite[Proposition 7.7]{Kob}.
\end{rema}
\begin{rema}
We can also define $\langle\alpha^{n-1}\rangle$-semistability and polystability in the usual way. We do not use these notions, in particular polystability, in this paper by assuming the irreducibility to reflexive sheaves. The same arguments in this paper work for polystable sheaves.
\end{rema}
\begin{exam}\label{stability invariance example}
Let $\pi:Y\to X$ be a blowup of compact K\"{a}hler manifold $X$. In this case we have $K_Y=\pi^*K_X+E$ where $E$ is effective $\pi$-exceptional. If $K_X$ is ample, then $K_Y$ is big and we have $\langle c_1(K_Y)^{n-1}\rangle=\pi^*c_1(K_X)^{n-1}$. Therefore, for any torsion free sheaf $\mathcal{E}$ on $Y$, we have that $\mathcal{E}$ is $\langle c_1(K_Y)^{n-1}\rangle$-stable if $\pi_{[*]}\mathcal{E}:=(\pi_{*}\mathcal{E})^{**}$ is $c_1(K_X)^{n-1}$-stable
\end{exam}
We will show the same type of bimeromorphic invariance of $\langle\alpha^{n-1}\rangle$-stability in more general setting (Theorem \ref{stability bimero}).

\subsection{$\langle\alpha^{n-1}\rangle$-slope stability on normal spaces}
In this section, we generalize the notion of $\langle\alpha^{n-1}\rangle$-stability of reflexive sheaves to singular setting. We frequently make use of Assumption \ref{assumption}.

\begin{defi}\label{stability normal defi}
Let $X$ be a compact normal space and $\alpha\in H^{1,1}_{BC}(X,\mathbb{R})$ be a big class on $X$. A torsion free sheaf $\mathcal{E}$ on $X$ is $\langle\alpha^{n-1}\rangle$-slope stable if for any resolution $\pi:\widehat{X}\to X$ of $X$ such that the reflexive pullback $\pi^{[*]}\mathcal{E}:=(\pi^{*}\mathcal{E})^{**}$ is $\langle(\pi^{*}\alpha)^{n-1}\rangle$-slope stable.
\end{defi}

In the following Lemma \ref{stability normal lemma}, we will prove that the above definition of $\langle\alpha^{n-1}\rangle$-stability is equivalent to that the reflexive pullback $\pi^{[*]}\mathcal{E}$ is $\langle(\pi^*\alpha)^{n-1}\rangle$-slope stable for some resolutions of $X$.  We remark that we need Assumption \ref{assumption} for Lemma \ref{stability normal lemma}. We also remark that if $\alpha$ is nef and big or a normal space is projective, we do not need Assumption \ref{assumption} (see Proposition \ref{nef big singular exceptional}, Theorem \ref{nystr19}).

\begin{lemm}\label{stability normal lemma}
    Let $X$ be a compact normal space of dimension $n$ with a big class $\alpha$ and $\mathcal{E}$ be a reflexive sheaf on X. Let $\pi_{i} : X_{i} \rightarrow X$ be resolutions of $X$ for $i=1,2$. Then $\pi_{1}^{[*]}\mathcal{E}$ is $\langle(\pi_{1}^{*}\alpha)^{n-1}\rangle$-stable if $\pi_{2}^{[*]}\mathcal{E}$ is $\langle(\pi_{2}^{*}\alpha)^{n-1}\rangle$-stable.
\end{lemm}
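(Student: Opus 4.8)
The plan is to reduce the comparison of two arbitrary resolutions to the invariance of slope stability under a single bimeromorphic morphism of compact Kähler manifolds, and then to prove that one-morphism invariance directly. First I would choose a common resolution $\sigma:Y\to X$ dominating both $X_1$ and $X_2$, with bimeromorphic morphisms $\mu_i:Y\to X_i$ satisfying $\pi_i\circ\mu_i=\sigma$ for $i=1,2$. On $Y$ the pulled-back classes agree, $\mu_i^*\pi_i^*\alpha=\sigma^*\alpha$, and reflexive pullback is functorial up to reflexive hull, so that
$$
\mu_i^{[*]}\big(\pi_i^{[*]}\mathcal{E}\big)=\big(\mu_i^*(\pi_i^*\mathcal{E})^{**}\big)^{**}=(\sigma^*\mathcal{E})^{**}=\sigma^{[*]}\mathcal{E}
$$
for both $i$, using that $\mathcal{F}$ and $\mathcal{F}^{**}$ agree in codimension one, so their reflexive pullbacks have the same double dual. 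Hence it suffices to show: for a bimeromorphic morphism $\mu:Y\to X_i$ of compact Kähler manifolds, $\pi_i^{[*]}\mathcal{E}$ is $\langle(\pi_i^*\alpha)^{n-1}\rangle$-slope stable on $X_i$ if and only if $\sigma^{[*]}\mathcal{E}=\mu^{[*]}(\pi_i^{[*]}\mathcal{E})$ is $\langle(\sigma^*\alpha)^{n-1}\rangle$-slope stable on $Y$. Chaining the equivalences for $\mu_1$ and $\mu_2$ then yields the lemma.

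Writing $\mathcal{G}:=\pi_i^{[*]}\mathcal{E}$ and $\beta:=\pi_i^*\alpha$ (a big class on $X_i$), the heart is that reflexive pullback along $\mu$ preserves slopes. Since $c_1$ is insensitive to modifications in codimension at least two, $\det(\mu^{[*]}\mathcal{G})$ and $\mu^*\det\mathcal{G}$ differ only by a $\mu$-exceptional divisor, so $c_1(\det\mu^{[*]}\mathcal{G})=\mu^*c_1(\det\mathcal{G})+\sum_j a_j[E_j]$ with each $E_j$ exceptional. Pairing with the positive product and invoking Assumption \ref{assumption} (which forces $\langle(\mu^*\beta)^{n-1}\rangle\cdot[E_j]=0$) together with the projection formula $\mu_*\langle(\mu^*\beta)^{n-1}\rangle=\langle\beta^{n-1}\rangle$ gives
\begin{align*}
\deg_{\mu^*\beta}\big(\mu^{[*]}\mathcal{G}\big)
&=\int_Y\mu^*c_1(\det\mathcal{G})\wedge\frac{\langle(\mu^*\beta)^{n-1}\rangle}{(n-1)!}\\
&=\int_{X_i}c_1(\det\mathcal{G})\wedge\frac{\langle\beta^{n-1}\rangle}{(n-1)!}=\deg_\beta(\mathcal{G}),
\end{align*}
and since ranks are preserved, $\mu_{\mu^*\beta}(\mu^{[*]}\mathcal{G})=\mu_\beta(\mathcal{G})$. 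This is exactly where Assumption \ref{assumption} enters, consistent with the remark preceding the lemma; when $\beta$ is nef and big or $X_i$ is projective it is automatic by Proposition \ref{nef big singular exceptional} or Theorem \ref{nystr19}.

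To transfer stability I would set up a slope-preserving correspondence between reflexive subsheaves. Given a reflexive subsheaf $0\neq\mathcal{F}\subsetneq\mathcal{G}$ on $X_i$, its reflexive pullback $\mu^{[*]}\mathcal{F}$ embeds as a reflexive subsheaf of $\mu^{[*]}\mathcal{G}$, and the computation above applied to $\mathcal{F}$ gives $\mu_{\mu^*\beta}(\mu^{[*]}\mathcal{F})=\mu_\beta(\mathcal{F})$. Conversely, given a reflexive subsheaf $\mathcal{S}\subset\mu^{[*]}\mathcal{G}$, the saturated pushforward $(\mu_*\mathcal{S})^{**}$ is a subsheaf of $\mu_*\mu^{[*]}\mathcal{G}\cong\mathcal{G}$ of the same slope, because $\mu$ is an isomorphism in codimension one, so $\mu_*$ and $\mu^{[*]}$ are mutually inverse in codimension one on reflexive sheaves. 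Thus a destabilizing subsheaf on either side produces one on the other with matching slopes, proving the one-morphism equivalence and hence the lemma.

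The main obstacle is the subsheaf correspondence rather than the numerical identity: one must check that the reflexive pullback of a proper subsheaf remains proper and reflexive (so that its slope is well defined through $\det$), that pushing forward and re-saturating lands back inside $\mathcal{G}$, and that these operations are mutually inverse in codimension one so that the \emph{strict} slope inequalities are preserved in both directions. The numerical input—that exceptional first-Chern contributions die against $\langle(\mu^*\beta)^{n-1}\rangle$—is precisely Assumption \ref{assumption}, so the only genuinely geometric work is controlling the behaviour of the sheaves along $\Exc(\mu)$.
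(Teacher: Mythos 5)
Your proposal is correct and follows essentially the same route as the paper: pass to a common resolution, reduce to invariance of stability under a single bimeromorphic morphism of compact K\"ahler manifolds, kill the exceptional first-Chern contributions against $\langle(\mu^*\beta)^{n-1}\rangle$ via Assumption \ref{assumption}, and transfer subsheaves by reflexive pullback in one direction and saturated pushforward in the other. The only cosmetic difference is that you assert the literal identity $\mu_i^{[*]}(\pi_i^{[*]}\mathcal{E})=\sigma^{[*]}\mathcal{E}$, whereas the paper (more safely) only uses that these sheaves agree away from the exceptional locus, which is all your slope computation actually requires.
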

\begin{proof}
    Let $\mu_i:Y\rightarrow X_i$ ($i=1,2)$ be common resolutions of $\pi_i$. That is, $\mu_i$ are bimeromorphic morphisms satisfying $\pi_1\circ \mu_1=\pi_2\circ\mu_2$. 
    \begin{center}
    \begin{tikzpicture}[auto]
        \node (x) at (1.5,0) {$X$};
        \node (x1) at (0,1.5) {$X_1$};
        \node (x2) at (3,1.5) {$X_2$};
        \node (y) at (1.5,3) {$Y$};
        \draw[->] (x1) to node[swap] {$\pi_1$} (x);
        \draw[->] (x2) to node {$\pi_2$} (x);
        \draw[->] (y) to node[swap] {$\mu_1$} (x1);
        \draw[->] (y) to node {$\mu_2$} (x2);
    \end{tikzpicture}
    \end{center}
   
    It suffices to show that $\pi_i^{[*]}\mathcal{E}$ is $\langle(\pi_i^*\alpha)^{n-1}\rangle$-stable if $\mu_i^{[*]}(\pi_i^{*}\mathcal{E})$ is $\langle(\mu_i^*\pi_i^*\alpha)^{n-1}\rangle$-stable. We remark that $\mu_{i[*]}(\mu_i^{[*]}(\pi_i^*\mathcal{E}))=\pi_i^{[*]}\mathcal{E}$ and $\mu_1^{[*]}(\pi_1^*\mathcal{E})=\mu_2^{[*]}(\pi_2^*\mathcal{E})$. Furthermore $X_i$ and $Y$ are both smooth compact K\"{a}hler.
    Hence what we have to prove is reduced to the following claim:
    \begin{claim}\label{stability normal lemma claim}
        Let $f:Y\rightarrow X$ be a bimeromorphic map between compact K\"{a}hler manifolds. Let $\alpha$ be a big class on $X$. Let $\mathcal{E}$ and $\mathcal{F}$ be a reflexive sheaves on $X$ and $Y$ respectively. Suppose $\mathcal{F}\simeq f^{[*]}\mathcal{E}$ away from the $f$-exceptional locus. Then $\mathcal{E}$ is $\langle\alpha^{n-1}\rangle$-stable if $\mathcal{F}$ is $\langle (f^*\alpha)^{n-1}\rangle$-stable.
    \end{claim}
    \begin{proof}
        Assume $\mathcal{E}$ is $\langle\alpha^{n-1}\rangle$-stable. We show that $\mathcal{F}$ is $\langle(f^*\alpha)^{n-1}\rangle$-stable. There is a natural inclusion 
        $$
        f_*\mathcal{F}\hookrightarrow f_{[*]}\mathcal{F}\simeq\mathcal{E}.
        $$
        Let $\mathcal{G}\subsetneq \mathcal{F}$ be a nontrivial reflexive subsheaf of $\mathcal{F}$. Then the pushforward $f_*\mathcal{G}$ is a torsion free subsheaf of $\mathcal{E}$ via the natural inclusion
        $$
        \iota: f_*\mathcal{G}\hookrightarrow f_{[*]}\mathcal{F}\simeq\mathcal{E}.
        $$
        Denote by $\widetilde{\mathcal{G}}\subset \mathcal{E}$ the saturation sheaf of $f_*\mathcal{G}$ by $\iota$, that is, it is defined as $\widetilde{\mathcal{G}}:=\Ker(\mathcal{E}\rightarrow\mathcal{E}/{\rm{Im}}(\iota))$. Then $\widetilde{\mathcal{G}}$ is a reflexive subsheaf of $\mathcal{E}$ and it is nontrivial, since $1\le \rk(\mathcal{G})=\rk(\widetilde{\mathcal{G}})<\rk(\mathcal{F})=\rk(\mathcal{E})$. Since $\mathcal{E}$ is $\langle\alpha^{n-1}\rangle$-stable, we have $\mu_{\alpha}(\widetilde{\mathcal{G}})<\mu_{\alpha}(\mathcal{E})$. Next we show that 
        \begin{equation}\label{stability normal lemma claim eq2}
             \mu_{\alpha}(\widetilde{\mathcal{G}})=\mu_{f^*\alpha}(\mathcal{G}) \hbox{ and  }
            \mu_{\alpha}(\mathcal{E})=\mu_{f^*\alpha}(\mathcal{F}).
        \end{equation}
        Since $f^{[*]}\widetilde{\mathcal{G}}\simeq\mathcal{G}$ away from the $f$-exceptional locus, it suffices to show the second equation. 
        There is an $f$-exceptional divisor $D$ such that 
        $c_1(\mathcal{F})-c_1(f^{[*]}\mathcal{E})=D$ holds. We further remark that 
        $$
        c_1(f^{[*]}\mathcal{E})-f^*c_1(\mathcal{E})=c_1(\det f^{[*]}\mathcal{E})-c_1(f^*\det\mathcal{E})=\widetilde{D}
        $$
        is an $f$-exceptional divisor.
        We recall the Assumption \ref{assumption}. Then we obtain
        \begin{align}\label{stability normal lemma claim eq3}
            \mu_{f^*\alpha}(\mathcal{F})
            &=\frac{1}{\rk(\mathcal{F})}\int_{Y}c_1(\mathcal{F})\wedge\frac{\langle (f^*\alpha)^{n-1}\rangle}{(n-1)!} \notag \\
            &=\frac{1}{\rk(\mathcal{E})}\int_{Y}\left( c_1(f^{[*]}\mathcal{E})+D \right)\wedge\frac{\langle (f^*\alpha)^{n-1}\rangle}{(n-1)!} \notag\\
            &=\frac{1}{\rk(\mathcal{E})}\int_{Y}\left( c_1(f^*\mathcal{E})+\tilde{D} \right)\wedge \frac{\langle (f^*\alpha)^{n-1}\rangle}{(n-1)!} \notag\\
            &=\frac{1}{\rk(\mathcal{E})}\int_{X} c_1(\mathcal{E})\wedge \frac{\langle\alpha^{n-1}\rangle}{(n-1)!} \notag\\
            &=\mu_{\alpha}(\mathcal{E}).
        \end{align}
        Then we obtain (\ref{stability normal lemma claim eq2}) and thus $\mathcal{F}$ is $\langle(f^*\alpha)^{n-1}\rangle$-stable.

        Next we assume $\mathcal{F}$ is $\langle(f^*\alpha)^{n-1}\rangle$-stable. We now show $\mathcal{E}$ is $\langle\alpha^{n-1}\rangle$-stable. 
        The reflexive hull of the natural map $f^*f_*\mathcal{F}\rightarrow\mathcal{F}$
        induces $f^{[*]}f_{[*]}\mathcal{F}\rightarrow\mathcal{F}$. Since $f_{[*]}\mathcal{F}\simeq \mathcal{E}$, we obtain the natural sheaf morphism
        $$
        \iota: f^{[*]}\mathcal{E}\rightarrow\mathcal{F}.
        $$
        Since $f$ is bimeromorphic, this $\iota$ is isomorphic away from the $f$-exceptional locus. 
        Let $\mathcal{G}\subset \mathcal{E}$ be a nontrivial reflexive subsheaf. Then, the pullback of the inclusion $\mathcal{G}\hookrightarrow \mathcal{E}$ induces the following composition morphism:
        $$
        \eta:f^*\mathcal{G}\rightarrow f^*\mathcal{E}\rightarrow f^{[*]}\mathcal{E} \xrightarrow{\iota}\mathcal{F}.
        $$
        We denote by $\widetilde{f^*\mathcal{G}}\subset \mathcal{F}$ the saturation sheaf of the image sheaf $\eta(f^*\mathcal{G})\subset \mathcal{F}$. 
        Since $\widetilde{f^*\mathcal{G}}\simeq f^*\mathcal{G}$ away from the $f$-exceptional locus, there is an $f$-exceptional divisor $D$ such that $c_1(\widetilde{f^*\mathcal{G}})-c_1(f^*\mathcal{G})=D$ holds. Furthermore, there is an $f$-exceptional divisor $\widetilde{D}$ such that 
        $$
        c_1(f^*\mathcal{G})-f^*c_1(\mathcal{G})=c_1(\det f^*\mathcal{G})-c_1(f^*\det\mathcal{G})=\widetilde{D}.
        $$
        Then by the same calculation with (\ref{stability normal lemma claim eq3}), we obtain
        $$
        \mu_{\alpha}(\mathcal{G})=\mu_{f^*\alpha}(\widetilde{f^*\mathcal{G}})<\mu_{f^*\alpha}(\mathcal{F})=\mu_{\alpha}(\mathcal{E}).
        $$
    \end{proof}
    Then we end the proof of Lemma \ref{stability normal lemma}.
\end{proof}

\subsection{bimeromorphic invariance of $\langle\alpha^{n-1}\rangle$-stability}  

The notion of $\langle\alpha^{n-1}\rangle$-stability is invariant under a suitable bimeromorphic map. We need the Assumption \ref{assumption} for the following theorem.
\begin{theo}\label{stability bimero}
    Let $(X, \mathcal{E},\alpha)$ and $(Y, \mathcal{F}, \beta)$ be triples consisting of a compact normal space, a reflexive sheaf and a big class. Let $f:Y\dashrightarrow X$ be a $\beta$-negative bimeromorphic map. Suppose 
        $\mathcal{F}\simeq f^{[*]}\mathcal{E}$ away from the $f$-exceptional locus.
    Then, $\mathcal{F}$ is $\langle\beta^{n-1}\rangle$-stable if and only if $\mathcal{E}$ is $\langle\alpha^{n-1}\rangle$-stable.
\end{theo}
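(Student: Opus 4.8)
The plan is to reduce everything to a single smooth Kähler model dominating both $X$ and $Y$, on which the positive products of the two big classes literally coincide, and then to compare the slopes of corresponding subsheaves by the same mechanism used in the proof of Claim \ref{stability normal lemma claim}. Concretely, since $f$ is $\beta$-negative, Definition \ref{beta-negative def} furnishes a resolution $q:Z\to Y$ and $p:Z\to X$ of $f$ together with an effective $p$-exceptional divisor $E$ with $q^{*}\beta=p^{*}\alpha+[E]$; we may take $Z$ smooth and Kähler, so that $p$ and $q$ are genuine resolutions of the normal spaces $X$ and $Y$ respectively. By Definition \ref{stability normal defi} combined with the resolution-independence of Lemma \ref{stability normal lemma} (which is exactly where Assumption \ref{assumption} is consumed), the assertion that $\mathcal{E}$ is $\langle\alpha^{n-1}\rangle$-stable is equivalent to the assertion that $p^{[*]}\mathcal{E}$ is $\langle(p^{*}\alpha)^{n-1}\rangle$-stable, and likewise $\mathcal{F}$ is $\langle\beta^{n-1}\rangle$-stable if and only if $q^{[*]}\mathcal{F}$ is $\langle(q^{*}\beta)^{n-1}\rangle$-stable. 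Thus the theorem is reduced to comparing two reflexive sheaves on the single manifold $Z$.

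On $Z$ I would invoke the two structural facts that make the comparison work. First, Lemma \ref{surj-codim1-product} gives the cohomological identity $\langle(q^{*}\beta)^{n-1}\rangle=\langle(p^{*}\alpha)^{n-1}\rangle=:L$, so the two slope functionals are \emph{the same} linear functional $\mathcal{H}\mapsto \frac{1}{\rk(\mathcal{H})}\int_{Z}c_{1}(\det\mathcal{H})\wedge\frac{L}{(n-1)!}$. Second, because $\mathcal{F}\simeq f^{[*]}\mathcal{E}$ away from the $f$-exceptional locus and $p=f\circ q$ as meromorphic maps, reflexive pullback commutes with composition over the locus where $f$ is biholomorphic, so $q^{[*]}\mathcal{F}\simeq p^{[*]}\mathcal{E}$ away from the exceptional locus on $Z$; in particular these sheaves have the same rank and agree over a dense Zariski-open set. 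It then remains to show that $q^{[*]}\mathcal{F}$ is $L$-stable if and only if $p^{[*]}\mathcal{E}$ is.

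For this last step I would repeat the argument of Claim \ref{stability normal lemma claim}: a nontrivial reflexive subsheaf of either sheaf is transported across the codimension-one isomorphism and then saturated, producing a reflexive subsheaf of the same rank in the other. The determinants $\det q^{[*]}\mathcal{F}$ and $\det p^{[*]}\mathcal{E}$ differ only by a divisor supported on the exceptional locus, and the same holds for each pair of corresponding subsheaves; hence the difference of their $L$-slopes is a sum of terms $L\cdot[D]$ with $D$ exceptional, each of which vanishes by Assumption \ref{assumption}. Corresponding subsheaves therefore have equal slope, and the destabilizing conditions for the two sheaves coincide. The main obstacle is precisely this slope-invariance bookkeeping: one must verify that \emph{every} discrepancy divisor arising — both in $c_{1}(q^{[*]}\mathcal{F})-c_{1}(p^{[*]}\mathcal{E})$ and in the saturations of the transported subsheaves — is exceptional for the relevant contraction, so that Assumption \ref{assumption} (or Proposition \ref{nef big singular exceptional} when $\alpha$ is nef and big, or Theorem \ref{nystr19} in the projective case) applies and annihilates it against $L$. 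Once this is settled the equivalence on $Z$ is immediate, and unwinding the reductions of the first paragraph yields the theorem.
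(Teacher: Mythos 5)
Your proposal follows essentially the same route as the paper: pass to a common smooth Kähler resolution $Z$ of the graph of $f$ furnished by $\beta$-negativity, use Lemma \ref{stability normal lemma} (where Assumption \ref{assumption} enters) to identify stability on $X$ and $Y$ with stability of the reflexive pullbacks on $Z$, invoke Lemma \ref{surj-codim1-product} for the equality $\langle(q^{*}\beta)^{n-1}\rangle=\langle(p^{*}\alpha)^{n-1}\rangle$, and then transfer subsheaves via saturation as in Claim \ref{stability normal lemma claim}, killing the exceptional discrepancy divisors against the positive product. This matches the paper's argument step for step, so the proposal is correct and not a genuinely different proof.
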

\begin{proof}
    Let $p:Z \to Y$ and $q:Z\to X$ be bimeromorphic morphisms from a compact K\"{a}hler manifold $Z$ so that $p^*\beta-q^*\alpha=E$ is an effective $q$-exceptional divisor such that $p(E)$ is $f$-exceptional.
    \begin{center}
    \begin{tikzpicture}[auto]
    \node (y) at (0,0) {$Y$};
    \node (x) at (2.6,0) {$X$};
    \node (z) at (1.3,1.3) {$Z$};
    \draw[->] (z) to node[swap] {$p$} (y);
    \draw[->] (z) to node {$q$} (x);
    \draw[->, dashed] (y) to node {$f$} (x);
    \end{tikzpicture}
    \end{center}
    By assumption, we have
    \begin{itemize}
        \item[(a)] $\langle(p^*\beta)^{n-1}\rangle=\langle(q^*\alpha)^{n-1}\rangle$ and
        \item[(b)] $p^{[*]}\mathcal{F}\simeq q^{[*]}\mathcal{E}$ away from the $q$-exceptional locus. 
    \end{itemize}
    Suppose that $\mathcal{E}$ is $\langle\alpha^{n-1}\rangle$-stable. Then, by Lemma \ref{stability normal lemma}, we have that $q^{[*]}\mathcal{E}$ is $\langle(q^*\alpha)^{n-1}\rangle$-stable. From (a), (b) above and Claim \ref{stability normal lemma claim}, we obtain $p^{[*]}\mathcal{F}$ is $\langle(p^*\beta)^{n-1}\rangle$-stable. By Lemma \ref{stability normal lemma} again, it means that $\mathcal{F}$ is $\langle\beta^{n-1}\rangle$-stable.

    Conversely, we assume that $\mathcal{F}$ is $\langle\beta^{n-1}\rangle$-stable. Then $p^{[*]}\mathcal{F}$ is $\langle(p^*\beta)^{n-1}\rangle$-stable. By (b) above, we can only say that $p^{[*]}\mathcal{F}$ coincides with $q^{[*]}\mathcal{E}$ only out of $q$-exceptional locus which is larger than the $p$-exceptional locus.  But by (b), we can conclude that $q^{[*]}\mathcal{E}$ is $\langle(q^*\alpha)^{n-1}\rangle$-stable by the same way with Claim \ref{stability normal lemma claim}. It implies that $\mathcal{E}$ is $\langle\alpha^{n-1}\rangle$-stable.
\end{proof}

A normal projective variety $X$ is $\mathbb{Q}$-Gorenstein if the canonical divisor $K_{X}$ is $\mathbb{Q}$-Cartier.
In this case, we define $c_1(K_{X}):=\frac{1}{r}c_1(rK_{X})$ where $r$ is an integer such that $rK_{X}$ is a line bundle. 
We say that a normal projective variety $X$ is of general type if there is a resolution $\pi : \widehat{X}\rightarrow X$ so that $K_{\widehat{X}}$ is big. 
\begin{lemm}
    Let $X$ be a normal $\mathbb{Q}$-Gorenstein projective variety. If $X$ is of general type, then $c_1(K_{X})$ is big in the sense of Definition \ref{big normal}. That is, there is a resolution $\pi:\widehat{X}\rightarrow X$ such that $\pi^*c_1(K_X)$ is big. 
\end{lemm}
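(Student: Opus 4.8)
The plan is to compare pluricanonical sections upstairs on a resolution with those on $X$ and to read off bigness from the volume. First I would fix an arbitrary resolution $\pi:\widehat{X}\to X$. Since $X$ is $\mathbb{Q}$-Gorenstein, $K_X$ is $\mathbb{Q}$-Cartier, so $\pi^*K_X$ is a well-defined $\mathbb{Q}$-divisor and the discrepancy formula $K_{\widehat{X}}=\pi^*K_X+E$ holds, where $E=\sum_i a_i E_i$ is a $\pi$-exceptional $\mathbb{Q}$-divisor (the difference is supported on the exceptional locus because it vanishes where $\pi$ is an isomorphism). Because $X$ is normal, $\pi_*\mathcal{O}_{\widehat{X}}=\mathcal{O}_X$, and more generally $\pi_*\mathcal{O}_{\widehat{X}}(mP)=\mathcal{O}_X$ for any effective $\pi$-exceptional divisor $P$: a local section is a rational function whose poles lie on the exceptional locus, which has codimension $\ge 2$ in $X$, hence extends by normality. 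Taking $m$ divisible by the Gorenstein index and by the denominators of the $a_i$, the projection formula gives the key identity
\begin{equation*}
h^0(\widehat{X},m\pi^*K_X)=h^0(X,mK_X),
\end{equation*}
which is in particular independent of the chosen resolution. Consequently $\mathrm{vol}(\pi^*K_X)=\mathrm{vol}_X(K_X):=\limsup_m \tfrac{n!}{m^n}\,h^0(X,mK_X)$ does not depend on $\pi$.

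Next I would establish the one-sided comparison $\mathrm{vol}(K_{\widehat{X}})\le \mathrm{vol}(\pi^*K_X)$. Decompose $E=P-N$ with $P,N\ge 0$ effective and $\pi$-exceptional. For $m$ as above, multiplication by the canonical section of $\mathcal{O}(mN)$ embeds $H^0(\widehat{X},mK_{\widehat{X}})=H^0(\widehat{X},m\pi^*K_X+mP-mN)$ into $H^0(\widehat{X},m\pi^*K_X+mP)$, and by the pushforward computation above applied to $P$ the latter equals $H^0(X,mK_X)=H^0(\widehat{X},m\pi^*K_X)$. Hence $h^0(\widehat{X},mK_{\widehat{X}})\le h^0(\widehat{X},m\pi^*K_X)$ along this sequence of $m$, which yields $\mathrm{vol}(K_{\widehat{X}})\le\mathrm{vol}(\pi^*K_X)$.

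To finish, I would invoke the definition of general type: there is a resolution $\pi_0:\widehat{X}_0\to X$ with $K_{\widehat{X}_0}$ big, i.e.\ $\mathrm{vol}(K_{\widehat{X}_0})>0$. The comparison then gives $\mathrm{vol}(\pi_0^*K_X)\ge \mathrm{vol}(K_{\widehat{X}_0})>0$, so $\pi_0^*K_X$ is big, which proves the displayed assertion. Since $\mathrm{vol}(\pi^*K_X)$ is resolution-independent by the first paragraph, $\pi^*K_X$ is in fact big for \emph{every} resolution $\pi$ (bigness of an integral class being equivalent to positivity of its volume, cf.\ Proposition \ref{big volume}), so $c_1(K_X)$ is big in the sense of Definition \ref{big normal}. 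The main obstacle is that $X$ is only assumed $\mathbb{Q}$-Gorenstein, so the discrepancies $a_i$ may be negative and $E$ need not be effective; the point that makes the estimate run in the right direction is precisely the section-embedding combined with the identity $\pi_*\mathcal{O}_{\widehat{X}}(mP)=\mathcal{O}_X$, which forces $\mathrm{vol}(K_{\widehat{X}})\le \mathrm{vol}(\pi^*K_X)$ rather than the reverse inequality.
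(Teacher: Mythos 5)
Your proof is correct, but it takes a genuinely different route from the paper's. The paper argues analytically: starting from the resolution $\pi:\widehat{X}\to X$ with $K_{\widehat{X}}$ big furnished by the definition of general type, it takes a K\"ahler current $T\in c_1(rK_{\widehat{X}})$, pushes it forward to a K\"ahler current in $rK_X$ using $\pi_*(rK_{\widehat{X}})=rK_X$, and pulls it back to exhibit a current bounded below by $\pi^*\omega$ in $\pi^*c_1(rK_X)$, concluding bigness from positivity of the non-pluripolar volume. You instead count pluricanonical sections: the identities $\pi_*\mathcal{O}_{\widehat{X}}=\mathcal{O}_X$ and $\pi_*\mathcal{O}_{\widehat{X}}(mP)=\mathcal{O}_X$ for effective exceptional $P$ give $h^0(\widehat{X},m\pi^*K_X)=h^0(X,mK_X)$ together with the one-sided embedding $H^0(\widehat{X},mK_{\widehat{X}})\hookrightarrow H^0(\widehat{X},m\pi^*K_X)$, hence $\mathrm{vol}(\pi^*K_X)\ge\mathrm{vol}(K_{\widehat{X}})>0$. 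Your version buys the stronger, resolution-independent identity $\mathrm{vol}(\pi^*K_X)=\mathrm{vol}_X(K_X)$ and therefore verifies Definition \ref{big normal} for \emph{every} resolution rather than just one; its only additional input is the equivalence between the section-counting volume of a line bundle and the non-pluripolar volume $\langle\alpha^n\rangle$ (Boucksom's volume theorem), which you need in order to pass from $\mathrm{vol}>0$ to bigness in the paper's current-theoretic sense --- Proposition \ref{big volume} as stated speaks only of $\langle\alpha^n\rangle$, not of $h^0$, so that citation should be made explicit. Both arguments ultimately hinge on the same two facts: the difference $K_{\widehat{X}}-\pi^*K_X$ is $\pi$-exceptional, and the definition of general type supplies a resolution on which $K_{\widehat{X}}$ is big.
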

\begin{proof}
    Let $r\in \mathbb{Z}_{>0}$ be an integer such that $rK_X$ is locally free. Since $X$ is of general type, there is a resolution $\pi:\widehat{X}\rightarrow X$ such that $K_{\widehat{X}}$ is big. Since $rK_{\widehat{X}}-\pi^*(rK_X)=E$ where $E$ is a (not necessarily effective) $\pi$-exceptional divisor, we have $\pi_*(rK_{\widehat{X}})=rK_X$, which is Cartier. Since a line bundle $rK_{\widehat{X}}$ is big, it contains a K\"{a}hler current $T$. Its push-forward $\pi_*T$ is a K\"{a}hler current contained in $\pi_*(rK_{\widehat{X}})=rK_X$. Since the pull-back $\pi^*(\pi_*T)\in \pi^*c_1(rK_X)$ satisfies $\pi^*(\pi_*T)\ge \pi^*\omega$, its volume $\langle\pi^*T^{n}\rangle$ is positive. Thus $\pi^*c_1(rK_X)$ is big.
\end{proof}
\begin{exam}\label{cor-stability-tangent-big}
    Let $X$ be a normal projective variety of general type with canonical singularities. Then the tangent sheaf $\mathcal{T}_{X}$ of $X$ is $\langle c_1(K_{X})^{n-1}\rangle$-slope polystable.
\end{exam}
 In fact, let $\mu:\widehat{X}\rightarrow X$ be a resolution of $X$. Since the normal variety $X$ is of general type, $\widehat{X}$ is of general type, by definition. In this case the minimal model program (MMP) starting at $\widehat{X}$ terminates  and thus there is a birational map $f:\widehat{X}\dashrightarrow X_{can}$ given by MMP (c.f.\cite{BCHM}). This $f$ is $K_{\widehat{X}}$-negative. Here $X_{can}$ is the canonical model of $\widehat{X}$. The tangent sheaf $\mathcal{T}_{X_{can}}$ of $X_{can}$ is $K_{X_{can}}$ by \cite{Gue}. By Theorem \ref{stability bimero}, we obtain the $\langle c_1(K_{\widehat{X}})^{n-1}\rangle$-stability of $\mathcal{T}_{\widehat{X}}$. Since $X$ has canonical singularities, we have $K_{\widehat{X}}= \mu^*K_{X}+E$ where $E$ is an effective $\mu$-exceptional divisor. Hence $\langle c_1(K_{\widehat{X}})^{k}\rangle=\langle c_1(\mu^*K_{X})^k\rangle$ and thus we obtain $\langle c_1(K_{X})^{n-1}\rangle$-stability of $\mathcal{T}_{X}$.

\section{$T$-Hermitian-Einstein metrics}\label{THE 0}
In this section, we define the notion of $T$-Hermitian-Einstein metric for $T$ a closed positive $(1,1)$-current (Definition \ref{THE defi}, Definition \ref{THE normal}) and we will see the bimeromorphic invariance of the existence of $T$-Hermitian-Einstein metrics.

Let $X$ be a compact normal space and $\mathcal{E}$ be a coherent sheaf on $X$. We denote by $\Sing(\mathcal{E})$ the non locally free locus of $\mathcal{E}$. This is a proper analytic subset of $X$ \cite{Kob}.
\subsection{$T$-Hermitian-Einstein metrics on smooth manifolds}
\begin{defi}\label{THE defi}
Let $X$ be a compact complex manifold of dimension $n$, $\alpha$ be a big class on $X$ and $\mathcal{E}$ be a reflexive sheaf on $X$. 
Denote as $\Omega:=\Amp(\alpha)\setminus\Sing(\mathcal{E})$.
Let $T\in\alpha$ be a closed positive $(1,1)$-current which is smooth K\"{a}hler on $\Amp(\alpha)$. A $T$-Hermitian-Einstein metric in $\mathcal{E}$ is a smooth hermitian metric $h$ in $\mathcal{E}|_{\Omega}$ whose curvature tensor $F_h$ satisfies
\begin{enumerate}
\item $\sqrt{-1}\Lambda_{T}F_h=\lambda\cdot \Id_{\mathcal{E}}$ on $\Omega$ for some constant $\lambda \in \mathbb{R}$,
\item $\int_{\Omega}|F_h|_T^2T^n<\infty$ and
\item the constant $\lambda$ in (1), called as the $\langle\alpha^{n-1}\rangle$-$Hermitian$-$Einstein$ $constant$, satisfies
$$
\lambda=\frac{1}{\int_X\langle\alpha^n\rangle/n!}\int_Xc_1(\det(\mathcal{E}))\wedge\frac{\langle\alpha^{n-1}\rangle}{(n-1)!}.
$$
\end{enumerate}
\end{defi}
\begin{defi}
Let $X$ be a compact complex manifold. Let $\alpha$ be a big class on $X$ and $\mathcal{E}$ be a reflexive sheaf on $X$. Then we say that $\mathcal{E}$ admits a $T$-Hermitian-Einstein metric if there exists a closed positive $(1,1)$-current $T\in\alpha$ on $X$ and a smooth hermitian metric $h$ in $\left.\mathcal{E}\right|_{\Amp(\alpha)\setminus\Sing(\mathcal{E})}$ such that $h$ is a $T$-Hermitian-Einstein metric in $\mathcal{E}$.
\end{defi}
\begin{exam}[compare with Example \ref{stability invariance example}]\label{HE blowup}
Let $\pi: Y\to X$ be a blow up of a compact complex manifold $X$ with $K_X$ ample and $\mathcal{E}$ be a reflexive sheaf on $Y$. Then $K_Y=\pi^*K_X+E$ is big. Recall that $\langle c_1(K_Y)^{k}\rangle=\pi^*c_1(K_X)^{k}$. 
Hence  the $\langle c_1(K_Y)^{n-1}\rangle$-HE constant of $\mathcal{E}$ equals to the $c_1(K_X)^{n-1}$-HE constant of the reflexive push forward $\pi_{[*]}\mathcal{E}:=(\pi_*\mathcal{E})^{**}$. Let $T:=\pi^*\omega+[E]$ be a closed positive $(1,1)$-current where $\omega$ is a K\"{a}hler metric in $c_1(K_X)$. Then $T$-HE metric is nothing but the pull back of the admissible $\omega$-HE metric in $\pi_{[*]}\mathcal{E}$.
\end{exam}

\subsection{$T$-Hermitian-Einstein metrics on normal spaces}
We define the notion of $T$-HE metric in reflexive sheaves on compact normal spaces. 
\begin{defi}\label{THE normal}
Let $X$ be a compact normal space. We fix a resolution $\pi: Y\to X$ of $X$. Let
$\alpha\in H^{1,1}_{BC}(X,\mathbb{R})$ be a big class on $X$ and
$\mathcal{E}$ be a reflexive sheaf on $X$. 
We say that $\mathcal{E}$ admits a $T$-Hermitian-Einstein metric if there exists
\begin{itemize}
\item a resolution of singularities $\pi:Y\to X$ of $X$ and
\item a closed positive $(1,1)$-current $T\in\pi^*\alpha$ on $Y$
\end{itemize}
such that $\pi^{[*]}\mathcal{E}$ admits a $T$-HE metric.

\end{defi}
\begin{lemm}
Let $X$ be a compact normal space, $\alpha$ be a big class on $X$ and $\mathcal{E}$ be a reflexive sheaf on $X$. Let $\pi_i:X_i\to X$ be resolutions of singularities of $X$ for $i = 1, 2$.
Then $\pi_1^{[*]}\mathcal{E}$ admits a $T_1$-HE metric if and only if $\pi_2^{[*]}\mathcal{E}$ admits a $T_2$-HE metric where each $T_i \in \pi^*_i\alpha$ is a closed positive $(1,1)$-current which is smooth K\"{a}hler on $\Amp(\pi^*_i\alpha)$ for $i=1, 2$.
\end{lemm}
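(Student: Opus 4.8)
The plan is to mirror the proof of Lemma~\ref{stability normal lemma} line by line, replacing the cohomological comparison of slopes with an explicit transport of the Hermitian metric. First I would pick a common resolution $\mu_i:Y\to X_i$ ($i=1,2$) with $\pi_1\circ\mu_1=\pi_2\circ\mu_2$, so that the reflexive pullbacks to $Y$ coincide, $\mu_1^{[*]}(\pi_1^*\mathcal{E})=\mu_2^{[*]}(\pi_2^*\mathcal{E})$, on the smooth compact K\"ahler manifold $Y$. As in the stability case, this reduces everything to a single-morphism statement, the analogue of Claim~\ref{stability normal lemma claim}: for a bimeromorphic morphism $f:Y\to X$ between compact K\"ahler manifolds, a big class $\alpha$ on $X$, and reflexive sheaves with $\mathcal{F}\simeq f^{[*]}\mathcal{E}$ away from the $f$-exceptional locus, the sheaf $\mathcal{E}$ admits a $T$-HE metric for a closed positive $T\in\alpha$ smooth K\"ahler on $\Amp(\alpha)$ (a valid current, in the sense of Definition~\ref{THE defi}) if and only if $\mathcal{F}$ admits a $T'$-HE metric for a valid $T'\in f^*\alpha$. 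Applying this claim to $\mu_1$ and to $\mu_2$ and chaining through $Y$ then yields the lemma.

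To link the two currents I would take $T':=f^*T$. By \cite[Lemma~4.3]{CT22} one has $E_{nK}(f^*\alpha)=f^{-1}(E_{nK}(\alpha))\cup\Exc(f)$, so $\Amp(f^*\alpha)\subseteq Y\setminus\Exc(f)$ and $f$ restricts to a biholomorphism $\Amp(f^*\alpha)\xrightarrow{\sim}\Amp(\alpha)\setminus Z$, where $Z:=f(\Exc(f))$ has codimension $\ge 2$; in particular $f^*T$ is a valid current in $f^*\alpha$. Since $\mathcal{F}\simeq f^*\mathcal{E}$ on this biholomorphism locus, a $T$-HE metric $h$ on $\mathcal{E}$ pulls back to $h':=f^*h$ on $\mathcal{F}$; because the Einstein equation is local and $\Lambda_{f^*T}(f^*F_h)=f^*(\Lambda_T F_h)$, we get $\sqrt{-1}\Lambda_{T'}F_{h'}=\lambda\cdot\Id$ with the same constant, and $\int|F_{h'}|^2_{T'}(T')^n=\int|F_h|^2_T T^n<\infty$ by the change of variables along $f$. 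This already settles the direction $X\Rightarrow Y$ outright, since the pullback metric is defined on all of $\Amp(f^*\alpha)\setminus\Sing(\mathcal{F})$. The Hermitian-Einstein constants match because $\int_Y\langle(f^*\alpha)^n\rangle=\int_X\langle\alpha^n\rangle$ by birational invariance of the volume \cite{BEGZ}, the numerators agree by the computation (\ref{stability normal lemma claim eq3}) (which invokes Assumption~\ref{assumption} to discard the $f$-exceptional divisors), and $\rk(\mathcal{F})=\rk(\mathcal{E})$.

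The hard part will be the reverse direction $Y\Rightarrow X$. Transporting a $T'$-HE metric on $\mathcal{F}$ through $f$ produces an HE metric on $\mathcal{E}$ only over $\Amp(\alpha)\setminus(Z\cup\Sing(\mathcal{E}))$, whereas Definition~\ref{THE defi} asks for it on all of $\Amp(\alpha)\setminus\Sing(\mathcal{E})$, so one must extend it across the contracted set $Z$. I expect to do this with a removable-singularity argument in the spirit of Bando-Siu \cite{BS}: over $\Amp(\alpha)$ the set $Z\setminus\Sing(\mathcal{E})$ is analytic of codimension $\ge 2$ and $\mathcal{E}$ is locally free there, while the finite-energy bound $\int|F_{h'}|^2_{T'}(T')^n<\infty$ inherited from $\mathcal{F}$ supplies the $L^2$ control of the curvature, and these are precisely the hypotheses under which an Einstein connection extends smoothly, as an admissible metric satisfying the same equation, across a codimension $\ge 2$ analytic subset. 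This analytic extension, not any numerical identity, is the genuine obstacle and the sole place where the statement goes beyond its cohomological counterpart Lemma~\ref{stability normal lemma}; a secondary, more routine point is the bookkeeping that guarantees the currents produced over $X_1$ and $X_2$ are again valid, which the choice $T'=f^*T$ together with the control of $\Amp(\cdot)$ by \cite[Lemma~4.3]{CT22} is designed to handle.
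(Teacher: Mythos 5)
Your proposal follows essentially the same route as the paper's proof: pass to a common resolution $p_i:Y\to X_i$, observe that the reflexive pullbacks to $Y$ agree away from the exceptional loci, match the Hermitian--Einstein constants via Assumption \ref{assumption} and the computation (\ref{stability normal lemma claim eq3}), and transport the metric by $p_1$-pullback followed by $p_2$-pushforward, taking $T_2=p_{2*}(p_1^*T_1)$. The one genuine divergence is your final extension step. The paper performs no extension: it uses the formula $\Amp(p_i^*\pi_i^*\alpha)=p_i^{-1}(\Amp(\pi_i^*\alpha))\setminus\Exc(p_i)$ to identify $\Amp(p_i^*\pi_i^*\alpha)\setminus\Sing(p_i^{[*]}(\pi_i^*\mathcal{E}))$ with $p_i^{-1}\bigl(\Amp(\pi_i^*\alpha)\setminus\Sing(\pi_i^{[*]}\mathcal{E})\bigr)$ and then simply pushes the metric forward. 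As you correctly point out, the pushforward by $p_2$ is a priori defined only off $p_2(\Exc(p_2))$, and this codimension $\ge 2$ set can meet $\Amp(\pi_2^*\alpha)\setminus\Sing(\pi_2^{[*]}\mathcal{E})$ (for instance when $Y$ dominates a blow-up of a point in the ample locus), so the identification of domains as written silently drops $\Exc(p_i)$ from the right-hand side and an extension across the contracted set is genuinely required. Your Bando--Siu removable-singularity argument (the Einstein equation bounds $\Lambda F$, and the finite-energy condition gives local $L^2$ control of the curvature with respect to the K\"ahler form $T$ on the ample locus) is the standard way to supply it, provided you also note that the Bando--Siu extension must coincide with $\pi_2^{[*]}\mathcal{E}$ across the codimension $\ge 2$ set by reflexivity. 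So your route is the paper's route made more careful at exactly the point where the paper is terse; the only caveat is that you state this key extension step as an expectation rather than carrying it out in detail.
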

\begin{proof}
We first remark that, by Assumption \ref{assumption}, the $\langle(\pi_1^*\alpha)^{n-1}\rangle$-HE constant of $\pi_1^{[*]}\mathcal{E}$ coincides with the $\langle(\pi_2^*\alpha)^{n-1}\rangle$-HE constant of $\pi_2^{[*]}\mathcal{E}$.

Let us choose a common resolution of $\pi_1$ and $\pi_2$.
\begin{center}
\begin{tikzpicture}[auto]
\node (x1) at (0,1) {$X_1$}; \node (x2) at (1.8,1) {$X_2$}; \node (x) at (0.9,0) {$X$}; \node (y) at (0.9, 2) {$Y$};
\draw[->] (x1) to node[swap] {$\pi_1$} (x); \draw[->] (x2) to node {$\pi_2$} (x); \draw[->] (y) to node[swap] {$p_1$} (x1); \draw[->] (y) to node {$p_2$} (x2);
\end{tikzpicture}
\end{center}
Let us consider a reflexive sheaf $p_i^{[*]}(\pi_i^*\mathcal{E})$ on $Y$. We can see that $p_i^{[*]}(\pi_i^*\mathcal{E})$ differs from $p_i^{[*]}(\pi_i^{[*]}\mathcal{E})$ only on the $p_i$-exceptional locus $\Exc(p_i)$. On the other hand, we know that $\Amp(p_i^*(\pi^*\alpha))=p_i^{-1}\left( \Amp(\pi_i^*\alpha)\right) \setminus \Exc(p_i)$ by \cite[Lemma 4.3]{CT15}. Therefore we obtain
$$
\Amp(p_i^*(\pi_i^*\alpha))\setminus \Sing(p_i^{[*]}(\pi_i^*\mathcal{E}))
= p_i^{-1}\left( \Amp(\pi_i^*\alpha)\setminus \Sing(\pi_i^{[*]}\mathcal{E})\right).
$$
We further remark that $p_1^{[*]}(\pi_1^*\mathcal{E})=p_2^{[*]}(\pi_2^*\mathcal{E})$ and it differs from $p_2^{[*]}\pi^{[*]}_2\mathcal{E}$ only on the $p_2$-exceptional locus.
Therefore, if $h$ is a $T$-HE metric in $\pi_1^{[*]}\mathcal{E}$ for some $T\in \pi^*_1\alpha$, then $p_{2 *}(p_1^*h)$ is a $p_{2 *}(p_1^*T)$-HE metric in $\pi_2^{[*]}\mathcal{E}$.
\end{proof}

\subsection{bimeromorphic invariance of $T$-Hermitian-Einstein metrics}
We need Assumption \ref{assumption} for the following theorem.
\begin{theo}\label{birational HE}
Let $(Y,\beta,\mathcal{F})$ and $(X,\alpha,\mathcal{E})$ be triples consisting of a compact normal space, a big class and a reflexive sheaf. Let $\pi:Y\dashrightarrow X$ be a bimeromorphic map. Suppose
\begin{itemize}
\item $\pi: Y \dashrightarrow X$ is $\beta$-negative in the sense of Definition \ref{beta-negative def} and
\item $\mathcal{F}\simeq \pi^{[*]}\mathcal{E}$ away from the $\pi$-exceptional locus.
\end{itemize}
Then, $\mathcal{E}$ admits a $T$-HE metric $h_T$ if $\mathcal{F}$ admits a $\pi^*T$-HE metric $h_{\pi^*T}$. 
More precisely, let $p: Z \to Y$ and $q : Z \to X$ be resolutions of indeterminacy of $\pi$ and $E$ be the effective $\mathbb{R}$-divisor so that $p^*\beta=q^*\alpha+[E]$ holds. Let $T \in q^*\alpha$ be a closed positive $(1,1)$-current which is smooth K\"{a}hler on $\Amp(q^*\alpha)$. Then $T$-HE metric $h_T$ in $q^{[*]}\mathcal{E}$ gives a $(T + [E])$-HE metric in $p^{[*]}\mathcal{F}$.

\end{theo}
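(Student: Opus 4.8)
The plan is to pull everything back to the common resolution $Z$ and then transport the given metric across the birational isomorphism. By the resolution-independence lemma proved just above and by Definition \ref{THE normal}, $\mathcal E$ (resp. $\mathcal F$) admits a Hermitian--Einstein metric on the normal space if and only if $q^{[*]}\mathcal E$ (resp. $p^{[*]}\mathcal F$) admits one on $Z$ with respect to a current in $q^*\alpha$ (resp. $p^*\beta$). Since $\pi$ is $\beta$-negative, Definition \ref{beta-negative def} furnishes $p^*\beta=q^*\alpha+[E]$ with $E$ effective and $q$-exceptional, and the hypothesis $\mathcal F\simeq\pi^{[*]}\mathcal E$ away from the $\pi$-exceptional locus upgrades on $Z$ to $p^{[*]}\mathcal F\simeq q^{[*]}\mathcal E$ away from the $q$-exceptional locus. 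Thus it suffices to begin with the $T$-HE metric $h_T$ on $q^{[*]}\mathcal E|_{\Omega}$, where $\Omega=\Amp(q^*\alpha)\setminus\Sing(q^{[*]}\mathcal E)$, and to check that, read through this generic isomorphism, $h_T$ is a $(T+[E])$-HE metric on $p^{[*]}\mathcal F$.

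The first substantive step is to verify that $T+[E]$ is an admissible current in the sense of Definition \ref{THE defi}, namely a closed positive $(1,1)$-current in $p^*\beta$ that is smooth K\"ahler on $\Amp(p^*\beta)$. Membership in $p^*\beta=q^*\alpha+[E]$, closedness and positivity are immediate; the point is the inclusion $\Amp(p^*\beta)\subseteq\Amp(q^*\alpha)\setminus\Supp(E)$. By \cite{DHY23} one has $N(p^*\beta)=N(q^*\alpha)+E$, so $E$ lies in the negative part of $p^*\beta$ and hence $\Supp(E)\subseteq E_{nn}(p^*\beta)\subseteq E_{nK}(p^*\beta)$; this already gives $[E]=0$ on $\Amp(p^*\beta)$. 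For the remaining inclusion $\Amp(p^*\beta)\subseteq\Amp(q^*\alpha)$ I would use that the two classes share the same positive part, $\langle p^*\beta\rangle=\langle q^*\alpha\rangle$ (Lemma \ref{exc.div.prod}, Lemma \ref{positive-positive}), together with the description of the non-K\"ahler locus of a big class through its divisorial Zariski decomposition (as in \cite{Bou2}, \cite{CT15}, \cite{CT22}), yielding $E_{nK}(p^*\beta)=E_{nK}(q^*\alpha)\cup\Supp(E)$. On $\Amp(p^*\beta)$ one then has $T+[E]=T$, which is smooth K\"ahler there because $\Amp(p^*\beta)\subseteq\Amp(q^*\alpha)$. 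I expect this comparison of ample loci to be the main obstacle, as it is the step where the geometry of $\beta$-negative contractions and the structure of non-K\"ahler loci genuinely enter.

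With the current in hand, the three Hermitian--Einstein conditions transfer with little extra work. Put $\Omega'=\Amp(p^*\beta)\setminus\Sing(p^{[*]}\mathcal F)$. Since $\Amp(p^*\beta)$ avoids $\Supp(E)$ and the $q$-exceptional locus (the latter being contained in $E_{nK}(q^*\alpha)$ by \cite{CT22} and Proposition \ref{nef big singular exceptional}), the generic isomorphism identifies $p^{[*]}\mathcal F|_{\Omega'}$ with $q^{[*]}\mathcal E|_{\Omega'}$, whence $\Sing(p^{[*]}\mathcal F)\cap\Amp(p^*\beta)=\Sing(q^{[*]}\mathcal E)\cap\Amp(p^*\beta)$ and $\Omega'\subseteq\Omega$; the smooth metric $h_T$ therefore lives on $p^{[*]}\mathcal F|_{\Omega'}$. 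Because $[E]=0$ on $\Omega'$, one has $\Lambda_{T+[E]}=\Lambda_T$ and $(T+[E])^n=T^n$ there, so the Einstein equation $\sqrt{-1}\Lambda_{T+[E]}F_{h_T}=\lambda\cdot\Id$ holds on $\Omega'$ with the same $\lambda$, and the integrability bound follows from $\int_{\Omega'}|F_{h_T}|^2_{T+[E]}(T+[E])^n=\int_{\Omega'}|F_{h_T}|^2_T T^n\le\int_{\Omega}|F_{h_T}|^2_T T^n<\infty$. It remains to match $\lambda$ with the Hermitian--Einstein constant for $(p^{[*]}\mathcal F,p^*\beta)$: the denominators agree since $\langle(p^*\beta)^n\rangle=\langle(q^*\alpha)^n\rangle$, while the numerators agree since $\langle(p^*\beta)^{n-1}\rangle=\langle(q^*\alpha)^{n-1}\rangle$ by Lemma \ref{exc.div.prod} and $c_1(\det p^{[*]}\mathcal F)-c_1(\det q^{[*]}\mathcal E)$ is a $q$-exceptional divisor, which pairs to zero against $\langle(q^*\alpha)^{n-1}\rangle$ by Assumption \ref{assumption} --- exactly the computation in \eqref{stability normal lemma claim eq3}. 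This shows $h_T$ is a $(T+[E])$-HE metric on $p^{[*]}\mathcal F$; running the same argument with the roles of $(\mathcal E,\alpha)$ and $(\mathcal F,\beta)$ interchanged (the situation on $Z$ being symmetric) gives the reverse implication, so the existence of a Hermitian--Einstein metric is bimeromorphically invariant, as claimed.
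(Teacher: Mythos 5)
Your proposal follows essentially the same route as the paper: pass to a common resolution where $p^*\beta=q^*\alpha+[E]$ with $E$ effective and exceptional, identify $\Amp(p^*\beta)$ with $\Amp(q^*\alpha)\setminus\Supp(E)$ and the corresponding singular loci via the generic isomorphism $p^{[*]}\mathcal{F}\simeq q^{[*]}\mathcal{E}$, transport the metric, and match the Hermitian--Einstein constants by the computation of \eqref{stability normal lemma claim eq3} using Assumption \ref{assumption} and the invariance of the positive product. You in fact supply more detail than the paper does on the comparison of ample loci (via the negative part and the non-K\"ahler locus) and on checking the three HE conditions, so the argument is sound and matches the intended proof.
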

\begin{proof}
Let us choose resolutions of indeterminacy of $\pi$, denoted by $q:Z\to Y$ and $p:Z\to X$, so that $Z$ is smooth K\"{a}hler and $q^*\beta-p^*\alpha=E$ is effective $p$-exceptional divisor. 
 \begin{center}
    \begin{tikzpicture}[auto]
    \node (y) at (0,0) {$Y$};
    \node (x) at (2.6,0) {$X$};
    \node (z) at (1.3,1.3) {$Z$};
    \draw[->] (z) to node[swap] {$p$} (y);
    \draw[->] (z) to node {$q$} (x);
    \draw[->, dashed] (y) to node {$\pi$} (x);
    \end{tikzpicture}
    \end{center}
Then we have
$$
\Amp(q^*\beta)=\Amp(p^*\alpha)\setminus E.
$$
We denote by $\Exc(p)$ and $\Exc(q)$ the exceptional sets of $p$ and $q$ respectively. Then we have
$$
\Exc(p)=\Exc(q)\cup E.
$$
Since $\mathcal{F}\simeq \pi^{[*]}\mathcal{E}$ away from the $\pi$-exceptional set, we have $q^{[*]}\mathcal{F}\simeq p^{[*]}\mathcal{E}$ on $Z \setminus (\Exc(q)\cup E)$. Therefore we obtain
$$
\Amp(q^*\beta)\setminus\Sing(q^{[*]}\mathcal{F})=\Amp(p^*\alpha)\setminus\Sing(p^{[*]}\mathcal{E}).
$$
Furthermore, by (\ref{stability normal lemma claim eq2}), the $\langle(p^*\alpha)^{n-1}\rangle$-HE constant of $p^{[*]}\mathcal{E}$ coincides with the $\langle(q^*\beta)^{n-1}\rangle$-HE constant of $q^{[*]}\mathcal{F}$.  Let $T\in p^*\alpha$ be a closed positive $(1,1)$-current which is smooth K\"{a}hler on $\Amp(p^*\alpha)$. Then a $T$-HE metric in $p^{[*]}\mathcal{E}$ gives a $(T+ [E])$-HE metric in $q^{[*]}\mathcal{F}$. 
\end{proof}

\section{Kobayashi-Hitchin correspondence}\label{KH corre}

In this section, we prove the Kobayashi-Hitchin correspondence. The proof is an application of the bimeromorphic invariance of $\langle\alpha^{n-1}\rangle$-slope stability (Theorem \ref{stability bimero}) and the existence of $T$-Hermitian-Einstein metrics (Theorem \ref{birational HE}). As a corollary, we obtain a complete proof the Kobayashi-Hitchin correspondence on projective variety of general type. The existence of the canonical model by \cite{BCHM} plays an essential role.

For the following theorem, we ``do not'' need Assumption \ref{assumption} since a big class in the statement admits a birational Zariski decomposition. 
\begin{theo}\label{KH corr}
Let $X$ be a compact normal space and $\alpha\in H^{1,1}_{BC}(X,\mathbb{R})$ be a big class on $X$. Let $\mathcal{E}$ be a reflexive sheaf on $X$. Suppose $\alpha$ admits a birational Zariski decomposition whose positive part is big and semiample. Then $\mathcal{E}$ is $\langle\alpha^{n-1}\rangle$-stable if $\mathcal{E}$ admits a $T$-HE metric.

A closed positive $(1,1)$-current $T$ is given as follows: Let $\mu: Z \to X$ be a resolution of $X$ such that the positive part of the divisorial Zariski decomposition $\mu^*\alpha=\langle\mu^*\alpha\rangle + [E]$ is semiample and big where $[E]$ is effective $\mathbb{R}$-divisor. Let $\omega'\in \langle\mu^*\alpha\rangle$ be a pullback of some K\"{a}hler metric. Then $T=\omega'+[E]$ is a closed positive $(1,1)$-current in the above statement. 
\end{theo}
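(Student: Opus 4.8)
The plan is to reduce the singular, big-class statement to the classical Kobayashi-Hitchin correspondence on a compact K\"ahler manifold (or normal variety) equipped with a genuine K\"ahler class, by exploiting the birational Zariski decomposition together with the bimeromorphic invariance results already established. First I would pass to the resolution $\mu : Z \to X$ furnished by the hypothesis, so that $\mu^*\alpha = \langle \mu^*\alpha\rangle + [E]$ with $\langle\mu^*\alpha\rangle$ big and semiample and $[E]$ effective. By Lemma \ref{exc.div.prod}, we have $\langle (\mu^*\alpha)^{n-1}\rangle = \langle \langle\mu^*\alpha\rangle^{n-1}\rangle$, and by Lemma \ref{positive-positive} the positive products of $\mu^*\alpha$ and of its positive part $\langle\mu^*\alpha\rangle$ agree in every degree. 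Thus $\langle\alpha^{n-1}\rangle$-slope stability on $X$ is, via Definition \ref{stability normal defi} and Lemma \ref{stability normal lemma}, equivalent to $\langle\langle\mu^*\alpha\rangle^{n-1}\rangle$-slope stability of $\mu^{[*]}\mathcal{E}$ on $Z$, and similarly the $T$-HE data on $X$ transfers to HE data for $\langle\mu^*\alpha\rangle$ on $Z$ by Theorem \ref{birational HE} (the divisor $[E]$ being absorbed exactly as in the statement, $T = \omega' + [E]$).

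Next I would use Proposition \ref{semiample-bimero}: since $\gamma := \langle\mu^*\alpha\rangle$ is big and semiample, there is a bimeromorphic morphism $\varphi : Z \to W$ onto a compact normal K\"ahler space $W$ and a K\"ahler class $\omega \in H^{1,1}_{BC}(W,\mathbb{R})$ with $\gamma = \varphi^*\omega$, and moreover $E_{nK}(\gamma) = \Exc(\varphi)$. Because $\gamma$ is nef and big, Proposition \ref{nef positive product} gives $\langle\gamma^{n-1}\rangle = \gamma^{n-1} = (\varphi^*\omega)^{n-1}$, so no nontrivial positive product survives: the slope computations on $Z$ are pullbacks of honest intersection numbers against the K\"ahler class $\omega$ on $W$. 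The morphism $\varphi$ is $\gamma$-negative (indeed it is a morphism contracting the null locus, so the defining relation of Definition \ref{beta-negative def} holds with the relevant exceptional divisor), which lets me apply Theorem \ref{stability bimero} to descend $\langle\gamma^{n-1}\rangle$-stability of $\varphi^{[*]}(\text{the reflexive sheaf})$ to $\omega^{n-1}$-stability of a reflexive sheaf $\mathcal{G}$ on $W$, and Theorem \ref{birational HE} to descend the HE metric to an $\omega$-HE (equivalently, admissible Hermitian-Einstein) metric on $\mathcal{G}$ over $\Amp(\omega)\setminus\Sing(\mathcal{G}) = W_{\reg}\setminus\Sing(\mathcal{G})$.

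At this point the problem is reduced to the known correspondence on the compact normal K\"ahler space $W$ with the genuine K\"ahler class $\omega$: I would invoke Chen's theorem \cite{Chen} (the Bando-Siu theorem \cite{BS} in the smooth or admissible case), which states that a reflexive sheaf on a compact normal K\"ahler variety is $\omega^{n-1}$-slope stable if it admits an admissible Hermitian-Einstein metric. Concretely, the direction to prove is: an $\omega$-HE metric on $\mathcal{G}$ forces $\mu_\omega(\mathcal{G}') < \mu_\omega(\mathcal{G})$ for every proper reflexive subsheaf, which is exactly the content of the stability half of \cite{Chen}. Reversing the two bimeromorphic descents then returns the desired $\langle\alpha^{n-1}\rangle$-stability of $\mathcal{E}$ on $X$. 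I expect the main obstacle to be the careful bookkeeping at the birational-contraction step: one must verify that $\varphi$ genuinely satisfies the $\gamma$-negativity hypothesis of Definition \ref{beta-negative def} (so that Theorem \ref{stability bimero} and Theorem \ref{birational HE} apply verbatim), that the reflexive sheaf pushes forward to a reflexive sheaf on the possibly-singular base $W$ with the sheaf-isomorphism holding away from the exceptional locus, and that the ample locus and singular locus match up so that the HE equation transfers to precisely the Zariski-open set on which \cite{Chen} is formulated. The analytic content—existence/uniqueness of the metric—is entirely imported; the work is in matching the positivity data and the loci across the tower $Z \to W$ and the resolution $Z \to X$.
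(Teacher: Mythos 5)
Your proposal follows essentially the same route as the paper: resolve to $Z$, use the Zariski decomposition and Proposition \ref{semiample-bimero} to contract onto a normal K\"ahler space with a genuine K\"ahler class, descend stability and the HE metric via Theorems \ref{stability bimero} and \ref{birational HE}, invoke \cite{Chen}, and reverse the descents. The only detail the paper makes explicit that you leave implicit is the appeal to Theorem \ref{thm of CT} (Collins--Tosatti orthogonality for classes admitting a Zariski decomposition) to justify using Lemma \ref{stability normal lemma} without Assumption \ref{assumption}.
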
\label{KH kahler}
\begin{proof}
Let $\mu:Z\to X$ be a modification so that $Z$ is smooth K\"{a}hler and the divisorial Zariski decomposition $\mu^*\alpha=\langle\mu^*\alpha\rangle+D$ gives big and semiample positive part. 
Let $\pi:Z\to Y$ be a bimeromorphic morphism to a compact normal K\"{a}hler space $Y$ with a K\"{a}hler class $\omega$ on $Y$ such that $\langle\mu^*\alpha\rangle=\pi^*\omega$. 
\begin{center}
\begin{tikzpicture}[auto]
\node (z) at (1,1) {$Z$}; \node (x) at (0,0) {$X$}; \node (y) at (2,0) {$Y$};
\draw[->] (z) to node[swap] {$\mu$} (x); \draw[->] (z) to node {$\pi$} (y);
\end{tikzpicture}
\end{center}
Suppose $\mathcal{E}$ is $\langle\alpha^{n-1}\rangle$-stable. Then, by Lemma \ref{stability normal lemma} and Theorem \ref{thm of CT}, $\mu^{[*]}\mathcal{E}$ is $\langle(\mu^*\alpha)^{n-1}\rangle=\pi^*\omega^{n-1}$-stable. Therefore, by Theorem \ref{stability bimero}, $\pi_{[*]}(\mu^{[*]}\mathcal{E})$ is $\omega^{n-1}$-stable. The result in \cite{Chen} ensures that $\pi_{[*]}(\mu^{[*]}\mathcal{E})$ admits the admissible $\omega$-HE metric $h$. By Proposition \ref{semiample-bimero}, we can see $D\subset E_{nK}(\mu^*\alpha)=\Exc(\pi)$. Let $T:=(\pi^*\omega+[D])$ be a closed positive $(1,1)$-current in $\mu^*\alpha$. Then, by Theorem \ref{birational HE}, its pullback $\pi^*h$ gives the $T$-HE metric in $\mu^{[*]}\mathcal{E}$. 
Next we assume that $\mu^{[*]}\mathcal{E}$ admits a $T:=(\pi^*\omega+[D])$-HE metric $h$. We remark that $\pi_*T=\omega$. Since $D$ is $\pi$-exceptional, the pushforward $\pi_*h$ is exactly the $\omega$-admissible HE metric in $\pi_{[*]}\mu^{[*]}\mathcal{E}$. Hence, again by the result in \cite{Chen}, we know $\pi_{[*]}\mu^{[*]}\mathcal{E}$ is $\omega^{n-1}$-stable. Hence $\mu^{[*]}\mathcal{E}$ is $\langle\mu^*\alpha^{n-1}\rangle=\pi^*\omega^{n-1}$-stable. It means that $\mathcal{E}$ is $\langle\alpha^{n-1}\rangle$-stable by Lemma \ref{stability normal lemma}.
\end{proof}

\begin{corr}\label{KH 1'}
Let $X$ be a normal projective variety with log terminal singularities, where $K_X$ is $\mathbb{R}$-Cartier. Let $\mathcal{E}$ be a reflexive sheaf on $X$. If $K_X$ is big, then $\mathcal{E}$ is $\langle c_1(K_X)^{n-1}\rangle$-stable if and only if $\mathcal{E}$ admits a $T$-HE metric.
\end{corr}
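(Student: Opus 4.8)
The plan is to derive the corollary as a direct application of the main correspondence, Theorem \ref{KH corr}, once we verify that $\alpha=c_1(K_X)$ satisfies its hypothesis: that $c_1(K_X)$ admits a birational Zariski decomposition whose positive part is big and semiample. The class $c_1(K_X)$ is well defined because $K_X$ is assumed $\mathbb{R}$-Cartier. Granting the hypothesis, Theorem \ref{KH corr}, whose proof establishes both implications, yields the asserted equivalence and produces the current $T$ out of the semiample positive part.

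First I would invoke the minimal model program. Since $X$ has log terminal singularities, $K_X$ is $\mathbb{R}$-Cartier, and $K_X$ is big, the pair $(X,0)$ is klt of log general type, so by \cite{BCHM} the canonical model $\varphi:X\dashrightarrow X_{\can}$ exists, $K_{X_{\can}}$ is ample, and $\varphi$ is $c_1(K_X)$-negative in the sense of Definition \ref{beta-negative def} (being a composition of flips and divisorial contractions). Choosing a resolution of indeterminacy
\begin{center}
\begin{tikzpicture}[auto]
\node (x) at (0,0) {$X$};
\node (xc) at (2.6,0) {$X_{\can}$};
\node (z) at (1.3,1.3) {$Z$};
\draw[->] (z) to node[swap] {$q$} (x);
\draw[->] (z) to node {$p$} (xc);
\draw[->, dashed] (x) to node {$\varphi$} (xc);
\end{tikzpicture}
\end{center}
with $Z$ smooth K\"{a}hler, I would record the relation $q^*c_1(K_X)=p^*c_1(K_{X_{\can}})+[E]$ for an effective $p$-exceptional divisor $E$.

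Next I would identify the Zariski decomposition of $q^*c_1(K_X)$. Since $\varphi$ is $c_1(K_X)$-negative, Lemma \ref{surj-codim1-product} gives $\langle(q^*c_1(K_X))^k\rangle=\langle(p^*c_1(K_{X_{\can}}))^k\rangle$ for all $k$. As $K_{X_{\can}}$ is ample, $p^*c_1(K_{X_{\can}})$ is nef and big on the smooth manifold $Z$ (its top self-intersection equals $(c_1(K_{X_{\can}}))^n>0$ since $p$ is a birational morphism), so Proposition \ref{nef positive product} yields $\langle(p^*c_1(K_{X_{\can}}))^k\rangle=(p^*c_1(K_{X_{\can}}))^k$; in degree one this reads $\langle q^*c_1(K_X)\rangle=p^*c_1(K_{X_{\can}})$. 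Thus the positive part of the divisorial Zariski decomposition of $q^*c_1(K_X)$ is nef, so it is a genuine Zariski decomposition, and its positive part $p^*c_1(K_{X_{\can}})$ is the pullback of an ample class, hence big and semiample. By Definition \ref{bir.Zar.}, $c_1(K_X)$ therefore admits a birational Zariski decomposition with big semiample positive part.

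Finally, applying Theorem \ref{KH corr} with $\alpha=c_1(K_X)$ gives that $\mathcal{E}$ is $\langle c_1(K_X)^{n-1}\rangle$-stable if and only if it admits a $T$-HE metric, where $T=\omega'+[E]$ with $\omega'$ a pullback to $Z$ of a K\"{a}hler metric representing the positive part $p^*c_1(K_{X_{\can}})$, exactly as specified in Theorem \ref{KH corr}. The essential input, and the main obstacle, is the existence of the canonical model with ample $K_{X_{\can}}$ together with the $c_1(K_X)$-negativity of $\varphi$; this rests on the finite-generation results of \cite{BCHM} for klt varieties of general type. Once that is in hand, the identification of the Zariski decomposition is routine, relying only on the invariance of positive products under $\beta$-negative maps (Lemma \ref{surj-codim1-product}) and the coincidence of positive and ordinary products for nef and big classes (Proposition \ref{nef positive product}).
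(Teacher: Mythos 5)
Your proposal is correct and follows essentially the same route as the paper: invoke \cite{BCHM} to produce the canonical model and a $c_1(K_X)$-negative contraction, read off the birational Zariski decomposition $q^*c_1(K_X)=p^*c_1(K_{X_{\can}})+[E]$ with big semiample positive part, and apply Theorem \ref{KH corr}. The only difference is that you spell out the verification that $\langle q^*c_1(K_X)\rangle=p^*c_1(K_{X_{\can}})$ via Lemma \ref{surj-codim1-product} and Proposition \ref{nef positive product}, which the paper leaves implicit.
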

\begin{proof}
By \cite{BCHM}, there exists the log canonical model of $X$. That is, there is $X_{\can}$ a normal projective variety with log canonical singularities where $K_{X_{\can}}$ is ample, and a birational contraction $\varphi:X\dashrightarrow X_{\can}$ which is $K_X$-negative. Therefore, there exists resolutions of indeterminacy of $\pi$, denoted by $p:Y\to X$ and $q:Y\to X_{\can}$, such that $p^*K_X-q^*K_{X_{\can}}=E$ is effective $q$-exceptional. The decomposition $p^*K_X=q^*K_{X_{\can}}+E$ gives the birational Zariski decomposition of $K_X$ with big and semiample positive part $\langle p^*K_X\rangle=q^*K_{X_{\can}}$. Therefore, by Theorem \ref{KH corr}, $\mathcal{E}$ is $\langle c_1(K_X)^{n-1}\rangle$-stable if $p^{[*]}\mathcal{E}$ admits a $T$-HE metric where $T=q^*\omega_{\can}+[E]$ for any K\"{a}hler metric $\omega_{\can}\in c_1(K_{X_{\can}})$.
\end{proof}

By \cite{Gue}, the tangent sheaf $\mathcal{T}_{X_{\can}}$ is $c_1(K_{X_{\can}})^{n-1}$-polystable. Hence we obtain the following.
\begin{exam}
Let $X$ be a normal projective variety with log terminal singularities where $K_X$ is $\mathbb{R}$-Cartier. If $K_X$ is big, then the tangent sheaf $\mathcal{T}_{X}$, the cotangent sheaf $\Omega_{X}^{[1]}$, their tensor products and wedge products are all $K_{X}$-slope polystable, and thus admit $T$-HE metrics.
\end{exam}

\section{Bogomolov-Gieseker Inequality for big and nef classes}\label{BG ineq}
In this section, we prove the Bogomolov-Gieseker inequality for nef and big classes on compact normal spaces (Corollary \ref{BG nef big 2}). We also obtain the characterization of the equality in a special setting (Theorem \ref{BG equality}).  In the proof, the ``openness'' of  slope stability with respect to big classes plays an essential role (see \S \ref{openness stab}).
In this section, we ``do not need'' Assumption \ref{assumption}.
\begin{lemm}[\cite{Chen}, Lemma 2.3]\label{split of class}
Let $X$ be a compact K\"{a}hler manifold and $V$ be a submanifold of $\codim(V)\ge p$. Let $\eta\in H^{n-p,n-p}(X,\mathbb{R})$ be a cohomology class satisfying $\eta|_V=0$. Then, for deformation retracts $N_1\Subset N_2$ of $V$, there is a closed $(n-p,n-p)$-form $\Phi$ and $(2n-2p-1)$ form $\Psi$ on $X$ such that
\begin{itemize}
\item $\Supp(\Phi)\subset X\setminus\overline{N_1}$,
\item $\Supp(\Psi)\subset N_2$ and
\item $\eta=\Phi+d\Psi$ as a smooth differential form.
\end{itemize}
\end{lemm}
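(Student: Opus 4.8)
The plan is to represent the class $[\eta]$ by a form supported off $\overline{N_1}$, absorbing the correction into an exact term localized in $N_2$. The mechanism is a cutoff applied to a local $\sqrt{-1}\partial\bar\partial$-potential of $\eta$ near $V$, which automatically produces a correction of pure type $(n-p,n-p)$. The role of the Kähler hypothesis is to supply such a potential.

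First I would produce a $\sqrt{-1}\partial\bar\partial$-potential near $V$. Since $V$ is a submanifold of the compact Kähler manifold $X$, it is itself compact Kähler, so the $\partial\bar\partial$-lemma holds on $V$. The hypothesis $\eta|_V=0$ in $H^{n-p,n-p}(V,\mathbb{R})$ forces the restricted class to vanish also in $H^{2n-2p}(V,\mathbb{R})$, because $\eta|_V$ is purely of type $(n-p,n-p)$ and the Hodge decomposition on $V$ is direct. Hence the closed $(n-p,n-p)$-form $\eta|_V$ is $d$-exact, and the $\partial\bar\partial$-lemma yields $\eta|_V=\sqrt{-1}\partial\bar\partial\gamma$ for some smooth real $(n-p-1,n-p-1)$-form $\gamma$ on $V$. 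Extending $\gamma$ to a bidegree-$(n-p-1,n-p-1)$ form $\tilde\gamma$ on a tubular neighborhood and replacing $\eta$ by $\eta-\sqrt{-1}\partial\bar\partial\tilde\gamma$, I reduce to the case where $\eta$ is closed, of pure type $(n-p,n-p)$, and restricts to zero on $V$ as a form. I then claim such an $\eta$ admits a smooth $(n-p-1,n-p-1)$-potential $u$ with $\eta=\sqrt{-1}\partial\bar\partial u$ on a neighborhood $W$ of $V$; this is a relative $\partial\bar\partial$-lemma, obtained by globalizing the local $\partial\bar\partial$-Poincar\'e lemma along the fibres of the tubular neighborhood over the compact Kähler base $V$.

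With the potential in hand, I would choose neighborhoods with $\overline{N_1}\subset W\Subset N_2$ and a cutoff $\rho\in C^\infty(X)$ with $\rho\equiv 1$ on a neighborhood of $\overline{N_1}$ and $\Supp(\rho)\subset W$. Set $\Psi:=d^c(\rho u)$ and $\Phi:=\eta-d\Psi$. Then $\Psi$ has total degree $2n-2p-1$ and $\Supp(\Psi)\subset\Supp(\rho)\subset N_2$. Since $d\Psi=\sqrt{-1}\partial\bar\partial(\rho u)$ and $\sqrt{-1}\partial\bar\partial$ raises bidegree by $(1,1)$, the form $d\Psi$, and hence $\Phi=\eta-d\Psi$, is of pure type $(n-p,n-p)$; moreover $\Phi$ is closed because $\eta$ is closed and $d\Psi$ is exact. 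On the locus where $\rho\equiv 1$ (a neighborhood of $\overline{N_1}$) one has $d\Psi=\sqrt{-1}\partial\bar\partial u=\eta$, so $\Phi=0$ there and $\Supp(\Phi)\subset X\setminus\overline{N_1}$. Finally I add the auxiliary exact term $\sqrt{-1}\partial\bar\partial\tilde\gamma$ back, absorbing it into $u$ (equivalently into $\Psi$), to recover the original $\eta$; the support and bidegree conclusions are unaffected.

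The main obstacle is the relative $\partial\bar\partial$-lemma of the second paragraph: passing from a $\partial\bar\partial$-potential on $V$ itself to one on a full neighborhood of $V$. The local $\partial\bar\partial$-Poincar\'e lemma gives potentials on polydisc charts transverse to $V$, but patching them into a single smooth $u$ requires controlling $\partial\bar\partial$-primitives across charts; the compact Kähler structure on the base $V$ is exactly what makes this globalization possible, via Hodge theory on $V$ applied fibrewise. Everything else, namely the cutoff computation, the bidegree bookkeeping, and the support tracking, is routine.
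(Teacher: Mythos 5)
Your overall scheme (produce a potential near $V$, cut it off, absorb the correction into an exact term supported in $N_2$) is the right shape, but the load-bearing step is exactly the one you leave unproven: the ``relative $\partial\bar\partial$-lemma'' asserting that a closed real $(n-p,n-p)$-form, cohomologically trivial near $V$, is $\sqrt{-1}\partial\bar\partial$-exact on a full neighborhood $W$ of $V$. A tubular neighborhood of a compact submanifold is neither compact nor Stein, so neither the Hodge-theoretic proof of the $\partial\bar\partial$-lemma nor the Stein vanishing arguments apply to $W$. Globalizing the local $\partial\bar\partial$-Poincar\'e lemma is not a matter of Hodge theory on $V$ ``applied fibrewise'': the differences of local potentials are $\partial\bar\partial$-closed forms on overlaps, and whether they can be corrected is governed by Bott--Chern/Aeppli-type cohomology of the open set $W$, which is not computed by $H^{*}(V)$. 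As written, this is a genuine gap, and it is the entire difficulty of your route.

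The paper's proof sidesteps all of this by working purely at the de Rham level: since $N_2$ deformation retracts onto $V$ and $\eta|_V=0$ in cohomology, $\eta|_{N_2}=d\Psi'$ for an ordinary smooth primitive $\Psi'$ on $N_2$; then $\Psi:=\rho\Psi'$ and $\Phi:=\eta-d\Psi$ finish the argument. The only thing your harder route would buy is that $\Phi$ has pure type $(n-p,n-p)$ (the term $d\rho\wedge\Psi'$ in the paper's construction is of mixed type, so the paper's own proof does not literally deliver the pure-type claim in the statement either). But that pure-type condition is never used downstream: in Lemma \ref{deg bound} one only needs that $\Phi$ is closed and supported away from $\overline{N_1}$ and that $\Psi$ is supported in $N_2$, so that $\int c_1(D)\wedge\Phi=0$ by support considerations and $\int c_1(D)\wedge d\Psi=0$ by Stokes. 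You should either adopt the elementary de Rham primitive argument, or supply an actual proof of the $\partial\bar\partial$-exactness of $\eta$ on a neighborhood of $V$.
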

\begin{proof}
Although this lemma is proven in \cite{Chen}, we note the proof for the readers.
Let $N_1\Subset N_2$ be two deformation retracts of $V$. We have $H^{n-p,n-p}(V)\simeq H^{n-p.n-p}(N_2)$ and thus $\eta|_{N_2}=0$ as a singular cohomologies. Now $X$ and $V$ are smooth  and thus we can choose $N_i$ as smooth submanifold. Thus we have $\eta|_{N_2}=0$ as a de-Rham cohomology. Hence there exists a smooth $(2n-2p-1)$ form $\Psi'$ on $N_2$ such that $\eta|_{N_2}=d\Psi'$ as a smooth form. Let $\rho:X\to \mathbb{R}_{\ge 0}$ be a bump function which satisfies $\equiv 1$ on $N_1$ and $\equiv 0$ on $X\setminus N_2$. Then $\Psi:=\rho\Psi'$ and $\Phi:=\eta-d\Psi$ are what we wanted. 
\end{proof}

\subsection{openness of stability}\label{openness stab}
\begin{lemm}\label{deg bound}
Let $X$ be a compact K\"{a}hler manifold and $\mathcal{E}$ be a reflexive sheaf on $X$. Let $\gamma_{\varepsilon}\in H^{n-1,n-1}(X,\mathbb{R})$ be a sequence of cohomology classes each of which is represented by a positive $(n-1,n-1)$-current. Suppose $(\gamma_{\varepsilon})_{\varepsilon}$ is contained in a bounded subset in $H^{n-1,n-1}(X, \mathbb{R})$. 
Then, there is a constant $C > 0$ such that the following inequality holds for any reflexive subsheaf $\mathcal{F}$ of ${\mathcal{E}}$ and any $0 \leq \varepsilon \ll1$, 
\begin{equation}
deg(\mathcal{F}, \gamma_{\varepsilon}):=\int_Xc_1(\det\mathcal{F})\wedge \frac{\gamma_{\varepsilon}}{(n-1)!} \leq C.
\end{equation}
If $\gamma_{\varepsilon}\rightarrow 0$ in $\varepsilon\rightarrow0$, then for any $N \in \mathbb{Z}_{>0}$, there exists $\varepsilon_{0}>0$ such that 
\begin{equation}
    deg(\mathcal{F}, \gamma_{\varepsilon})<\frac{1}{N}
\end{equation}
holds for any reflexive subsheaf $\mathcal{F}\subset {\mathcal{E}}$ and $0<\varepsilon<\varepsilon_{0}$.
\end{lemm}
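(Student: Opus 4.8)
The plan is to reduce both assertions to a single uniform upper bound of the shape
\begin{equation}
\deg(\mathcal{F},\gamma_{\varepsilon}) \le \frac{C_0}{(n-1)!}\,\{\omega\}\cdot\{\gamma_{\varepsilon}\},
\end{equation}
valid for \emph{every} reflexive subsheaf $\mathcal{F}\subset\mathcal{E}$ and every small $\varepsilon$, where $\omega$ is a fixed Kähler form and $C_0>0$ depends only on $\mathcal{E}$, on $\omega$ and on a fixed background metric, but neither on $\mathcal{F}$ nor on $\varepsilon$. Granting this, the first claim is immediate: since $(\{\gamma_\varepsilon\})_\varepsilon$ lies in a bounded set, the pairing $\{\omega\}\cdot\{\gamma_\varepsilon\}$ is uniformly bounded, so one takes $C$ to be its supremum. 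For the second claim, if $\gamma_\varepsilon\to 0$ in cohomology then $\{\omega\}\cdot\{\gamma_\varepsilon\}\to 0$, and because the right-hand side does not depend on $\mathcal{F}$, given $N$ one chooses $\varepsilon_0$ so that it is $<1/N$ for all $0<\varepsilon<\varepsilon_0$.

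To produce the bound I would fix a smooth Hermitian metric $h_0$ on $\mathcal{E}$ over the locally free locus $X\setminus\Sing(\mathcal{E})$ (whose complement has codimension at least two), a Kähler form $\omega$, and a constant $C_0>0$ so large that the Griffiths curvature of $(\wedge^{r}\mathcal{E},\wedge^{r}h_0)$ is dominated by $C_0\omega$ for each $r=1,\dots,\rk(\mathcal{E})-1$; this is possible since there are finitely many values of $r$. Given a reflexive subsheaf $\mathcal{F}$ of rank $r$, the inclusion $\mathcal{F}\hookrightarrow\mathcal{E}$ induces a nonzero sheaf homomorphism $\det\mathcal{F}\to\wedge^{r}\mathcal{E}$, and $\det\mathcal{F}$ is a genuine line bundle because $X$ is smooth. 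Equipping $\det\mathcal{F}$ with the metric $h_{\mathrm{ind}}$ induced from $\wedge^{r}h_0$, the Gauss--Codazzi formula shows that on the locus where this homomorphism realizes $\det\mathcal{F}$ as a subbundle the Chern form satisfies $c_1(\det\mathcal{F},h_{\mathrm{ind}})\le C_0\omega$, the second fundamental form contributing with the favourable sign. Where the determinant map degenerates along a divisor, the induced curvature current only acquires an extra \emph{nonpositive} term $-[D_{\mathcal{F}}]$ with $D_{\mathcal{F}}$ effective. Consequently $C_0\omega-c_1(\det\mathcal{F},h_{\mathrm{ind}})$ is a closed positive $(1,1)$-current; equivalently, it has the form $C_0\omega+\sqrt{-1}\partial\bar\partial u$ for a global $\omega$-quasi-plurisubharmonic function $u$.

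Finally I would pair this against $\gamma_\varepsilon$. Writing $\deg(\mathcal{F},\gamma_\varepsilon)=\frac{1}{(n-1)!}\int_X c_1(\det\mathcal{F},h_{\mathrm{ind}})\wedge\gamma_\varepsilon$ and using that $\gamma_\varepsilon$ is closed, integration by parts gives
\[
\int_X\left(C_0\omega-c_1(\det\mathcal{F},h_{\mathrm{ind}})\right)\wedge\gamma_\varepsilon=C_0\,\{\omega\}\cdot\{\gamma_\varepsilon\},
\]
while the positivity of both factors makes the left-hand integrand a nonnegative measure; rearranging yields the asserted inequality. The main obstacle is exactly this last step: the product of the possibly singular $(1,1)$-current $C_0\omega-c_1(\det\mathcal{F},h_{\mathrm{ind}})$ with the merely positive $(n-1,n-1)$-current $\gamma_\varepsilon$ is not a priori defined, and the integration by parts must be justified. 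I would handle this by regularizing the quasi-psh potential $u$ via Demailly's approximation, for which both the identity and the sign are transparent, and then passing to the limit using the closedness of $\gamma_\varepsilon$; the truncation near $\Sing(\mathcal{E})$ is harmless because that locus has codimension at least two. Crucially, only the \emph{upper} bound is required, and the uncontrolled term $-[D_{\mathcal{F}}]\wedge\gamma_\varepsilon$ carries the sign that helps, so no lower control on the degeneracy divisor $D_{\mathcal{F}}$ is ever needed.
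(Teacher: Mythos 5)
Your overall strategy --- a uniform bound $\deg(\mathcal{F},\gamma_\varepsilon)\le C_0\,\{\omega\}\cdot\{\gamma_\varepsilon\}$ with $C_0$ depending only on a background metric of $\mathcal{E}$, obtained from Gauss--Codazzi --- is the same as the paper's, and your reduction of both assertions to such a bound is fine. The gap is exactly at the step you flag as the main obstacle, and it cannot be repaired in the way you suggest. You need the cohomological inequality $\bigl(C_0\{\omega\}-c_1(\det\mathcal{F})\bigr)\cdot\{\gamma_\varepsilon\}\ge 0$, and you propose to deduce it from the fact that $C_0\omega-c_1(\det\mathcal{F},h_{\mathrm{ind}})$ is a closed positive $(1,1)$-current, via regularization of its potential and integration by parts. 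But a pseudo-effective $(1,1)$-class need not pair nonnegatively with the class of a closed positive $(n-1,n-1)$-current: on a surface, take $\gamma=[C]$ for a curve with $C^2<0$; then $[C]$ is simultaneously pseudo-effective and a positive $(n-1,n-1)$-current class, yet $[C]\cdot[C]<0$. Demailly regularization cannot bridge this, because regularizing a current with nonzero Lelong numbers necessarily loses positivity by an amount proportional to those Lelong numbers (the class $C_0\{\omega\}-c_1(\det\mathcal{F})$ is in general not nef), and here the Lelong numbers include the multiplicities of the degeneracy divisor $D_{\mathcal{F}}$, which are not bounded uniformly over subsheaves $\mathcal{F}$. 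Relatedly, the sign of $-[D_{\mathcal{F}}]$ is not ``favourable'' at the level of classes: $-[D_{\mathcal{F}}]\cdot\{\gamma_\varepsilon\}$ can be positive and arbitrarily large (e.g.\ $\mathcal{F}=\mathcal{O}(-mC)\hookrightarrow\mathcal{O}^{\oplus 2}$ and $\gamma_\varepsilon=[C]$ as above give $\deg(\mathcal{F},[C])=-mC^2\to+\infty$), so your claim that no lower control on $D_{\mathcal{F}}$ is needed is false; this term must be handled cohomologically, and in effect forces one to work with saturated subsheaves, as the paper's proof implicitly does.

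The paper avoids all of this by never multiplying a singular $(1,1)$-current against $\gamma_\varepsilon$: it passes to a resolution $\nu$ on which the saturation $\widehat{\mathcal{F}}$ of $\nu^*\mathcal{F}$ becomes an honest subbundle of $\nu^*\mathcal{E}$, so that the Chern--Weil representative $\mathrm{Tr}\bigl(\widehat p\,F_{\nu^*h_0}\,\widehat p+\bar\partial\widehat p\wedge\partial_{h_0}\widehat p\bigr)$ of $c_1(\widehat{\mathcal{F}})$ is a globally \emph{smooth} closed form bounded above by $\|F_{h_0}\|_{L^\infty}\,\rk(\mathcal{F})\,\nu^*\omega$; smooth representatives pair unambiguously and monotonically with positive current classes, which is what makes dropping the second-fundamental-form term legitimate. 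The discrepancy $c_1(\widehat{\mathcal{F}})-\nu^*c_1(\mathcal{F})$, supported on the exceptional divisor, is then killed at the level of cohomology using Lemma \ref{split of class}. To salvage your route you would have to replace the current-theoretic pairing by this resolution-plus-smooth-representative device; as written, the final step does not close.
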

\begin{proof}
Since $\deg(\mathcal{F},\gamma_{\varepsilon})=\deg(\pi^{[*]}\mathcal{F},\pi^*\gamma_{\varepsilon})$ for any resolution $\pi$, we can assume that $\mathcal{E}$ is locally free.
Let $h_{0}$ be a smooth hermitian metric in ${\mathcal{E}}$ and $p : {\mathcal{E}} \rightarrow {\mathcal{E}}$ be the $h_{0}$-orthogonal projection to $\mathcal{F}$ defined on the Zariski open set where $\mathcal{F}$ is locally free. Let $\nu : \widehat{X} \rightarrow {X}$ be a resolution so that the saturation sheaf of $\nu^*\mathcal{F}$ in $\nu^*\mathcal{E}$, denoted by $\widehat{\mathcal{F}}$, is a locally free subsheaf of a vector bundle $\nu^{*}{\mathcal{E}}$.  Let $\widehat{p}$ be the $\nu^{*}h_{0}$-orthogonal projection to $\widehat{\mathcal{F}}$. This projection $\widehat{p}$ is smooth and $\nu^{*}p = \widehat{p}$ away from the $\nu$-exceptional divisor. The equation
\begin{equation}
c_{1}(\widehat{\mathcal{F}}) = \nu^{*}c_{1}(\mathcal{F}) + c_{1}(D)
\end{equation}
holds where $D$ is the $\nu$-exceptional divisor. Since $\codim(\nu(D))\geq 2$, we have $(\nu^{*}\gamma_{\varepsilon})|_{D} = 0 \in H^{n-1, n-1}(D, \mathbb{R})$. Then we apply Lemma \ref{split of class} for $\nu^{*}\gamma_{\varepsilon}$. We recall that $c_{1}(D)$ equals $0$ away from $D$. Then we have 
\begin{equation}
\int_{X} c_{1}(D) \wedge \nu^{*}\gamma_{\varepsilon} = 0.
\end{equation}
Then, we can calculate as follows and thus obtain the first assertion.
\begin{align}\label{deg bound eq}
deg(\mathcal{F}, \gamma_{\varepsilon}) 
&= \int_{{X}}c_{1}(\mathcal{F}) \wedge \frac{\gamma_{\varepsilon}}{(n-1)!} \notag \\
&= \int_{\widehat{X}} c_{1}(\widehat{\mathcal{F}}) \wedge \frac{\nu^{*}\gamma_{\varepsilon}}{(n-1)!} \notag \\
&= \int_{\widehat{X}} Tr (\widehat{p}\cdot F_{\nu^{*}h_{0}}\cdot \widehat{p} + \bar{\partial}\widehat{p} \wedge \partial_{h_{0}}\widehat{p}) \wedge \frac{\nu^{*}\gamma_{\varepsilon}}{(n-1)!} \notag \\
&\leq  \int_{{X}\setminus \nu(D)} Tr (p\cdot F_{h_{0}}\cdot p) \wedge \frac{\gamma_{\varepsilon}}{(n-1)!} \notag \\
&\leq \|F_{h_{0}}\|_{L^{\infty}}rk(\mathcal{F}) \int_{\widehat{X}} \omega\wedge\frac{\gamma_{\varepsilon}}{(n-1)!} \notag \\
&\leq C .
\end{align}
If $\gamma_{\varepsilon}\rightarrow 0$, we can easily see the second assertion from the above inequality. We end the proof.
\end{proof}
\begin{lemm}\label{max-slope}
    Let $X$ be a compact K\"{a}hler manifold and $\alpha$ be a big class. Then for any reflexive sheaf $\mathcal{E}$ on $X$, there exists a reflexive subsheaf $\mathcal{F}_{\alpha}$ of $\mathcal{E}$ such that $0\neq \mathcal{F}_{\alpha}\subsetneq \mathcal{E}$ and
    $$
    \mu_{\alpha}(\mathcal{F}_{\alpha})
    =\sup\{\mu_{\alpha}(\mathcal{F})\mid 0\neq \mathcal{F}\subsetneq \mathcal{E}:\hbox{a reflexive subsheaf}\}.
    $$
\end{lemm}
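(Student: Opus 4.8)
The plan is to realize the maximizer as a weak limit of a maximizing sequence of saturated subsheaves, in the spirit of the Uhlenbeck--Yau weak-subsheaf method, using Lemma \ref{deg bound} to supply both the a priori bound that makes the supremum finite and the energy estimate that drives the compactness.

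First I would check that the supremum is finite. Fix a positive $(n-1,n-1)$-current $\gamma$ representing $\langle\alpha^{n-1}\rangle$ and apply the first assertion of Lemma \ref{deg bound} to obtain a uniform bound $\deg_{\alpha}(\mathcal{F})\le C$, with $C\ge 0$, valid for every reflexive subsheaf $\mathcal{F}\subset\mathcal{E}$. Since $\rk(\mathcal{F})\ge 1$, this forces $\mu_{\alpha}(\mathcal{F})\le C$, so $\mu_{\max}:=\sup_{\mathcal{F}}\mu_{\alpha}(\mathcal{F})$ is a finite real number. Next I choose a sequence $(\mathcal{F}_i)$ with $\mu_{\alpha}(\mathcal{F}_i)\to\mu_{\max}$. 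Replacing each $\mathcal{F}_i$ by its saturation in $\mathcal{E}$ only increases the slope, because saturation changes $c_1$ by the class of an effective divisor, which pairs nonnegatively with the positive class $\langle\alpha^{n-1}\rangle$, while the rank is unchanged. As the ranks lie in the finite set $\{1,\dots,\rk(\mathcal{E})-1\}$, I pass to a subsequence of constant rank $r$; then $\deg_{\alpha}(\mathcal{F}_i)=r\,\mu_{\alpha}(\mathcal{F}_i)$ converges, so the $\alpha$-degrees are bounded along the sequence.

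The core step is attainment. To each saturated $\mathcal{F}_i$ I attach the $h_0$-orthogonal projection $p_i:\mathcal{E}\to\mathcal{E}$ onto $\mathcal{F}_i$, defined off a proper analytic set, exactly as in the proof of Lemma \ref{deg bound}. The Chern--Weil identity used there,
\[
(n-1)!\,\deg_{\alpha}(\mathcal{F}_i)=\int_X \mathrm{Tr}\!\left(p_i F_{h_0} p_i+\bar\partial p_i\wedge\partial_{h_0}p_i\right)\wedge\gamma,
\]
together with the pointwise positivity of $\gamma$, the bound on $\deg_{\alpha}(\mathcal{F}_i)$, and $\|F_{h_0}\|_{L^{\infty}}<\infty$, yields a uniform bound on the $\gamma$-weighted Dirichlet energy $\int_X -\mathrm{Tr}(\bar\partial p_i\wedge\partial_{h_0}p_i)\wedge\gamma$. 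On compact subsets of the ample locus $\Amp(\alpha)$, where the positive representative of $\langle\alpha^{n-1}\rangle$ dominates a fixed multiple of $\omega^{n-1}$, this is a genuine $L^2_1$ bound on $p_i$. Hence $(p_i)$ is bounded in $L^2_1$ over $\Amp(\alpha)$ and, after passing to a subsequence, converges weakly in $L^2_1$ to an $L^2_1$ Hermitian projection $p_{\infty}$. By the Uhlenbeck--Yau regularity theorem for weakly holomorphic subbundles \cite{UY}, $p_{\infty}$ defines a coherent, and after saturation reflexive, subsheaf $\mathcal{F}_{\infty}\subset\mathcal{E}$ of rank $r$, so that $0\neq\mathcal{F}_{\infty}\subsetneq\mathcal{E}$. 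Lower semicontinuity of the degree under weak $L^2_1$ convergence (the energy term can only drop in the limit) then gives $\mu_{\alpha}(\mathcal{F}_{\infty})\ge\limsup_i\mu_{\alpha}(\mathcal{F}_i)=\mu_{\max}$, while the reverse inequality $\mu_{\alpha}(\mathcal{F}_{\infty})\le\mu_{\max}$ is automatic from the definition of the supremum; so $\mathcal{F}_{\alpha}:=\mathcal{F}_{\infty}$ is the required maximizer.

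I expect the main obstacle to be precisely this attainment step, and specifically the degeneracy of the weight $\gamma=\langle\alpha^{n-1}\rangle$. Because $\alpha$ is only big, the positive representative of $\langle\alpha^{n-1}\rangle$ is in general a singular current that degenerates along the non-K\"ahler locus $E_{nK}(\alpha)$, so the energy estimate controls the projections only on $\Amp(\alpha)$; one must then argue that the weak limit $p_{\infty}$, a priori defined over $\Amp(\alpha)$, extends across the proper analytic set $E_{nK}(\alpha)$ to a coherent subsheaf of $\mathcal{E}$ on all of $X$ with no loss of degree. This is exactly where the movable nature of $\langle\alpha^{n-1}\rangle$ matters: unlike for a K\"ahler polarization, a bound on the $\langle\alpha^{n-1}\rangle$-degree does not control the topological type $c_1(\mathcal{F}_i)\in H^2(X,\mathbb{Z})$ (whose image pairs with $\langle\alpha^{n-1}\rangle$ in a possibly dense subgroup of $\mathbb{R}$), so the compactness must come from the analytic weak-limit argument rather than from finiteness of Chern classes. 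Passing to a resolution as in Lemma \ref{deg bound} to make the subsheaves locally free, and performing the extension across $E_{nK}(\alpha)$ using that its complement is the ample locus carrying currents of small unbounded locus, is the technical heart of the argument; the correspondence with the reflexive picture on singular $X$ can then be read off from the result of Chen \cite{Chen}.
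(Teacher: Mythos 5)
Your strategy (Uhlenbeck--Yau weak limits of a maximizing sequence of orthogonal projections) is not the paper's, and as written it has a genuine gap at exactly the point you yourself flag as ``the technical heart'': you never carry out the weak compactness, the extension of $p_{\infty}$ across $E_{nK}(\alpha)$, or the semicontinuity of the degree with respect to the degenerate weight $\gamma=\langle\alpha^{n-1}\rangle$. For a singular positive $(n-1,n-1)$-current that degenerates along $E_{nK}(\alpha)$, the $L^2_1$ bound you extract is only local on compact subsets of $\Amp(\alpha)$ and is not uniform near $E_{nK}(\alpha)$; the Uhlenbeck--Yau regularity theorem is proved for a K\"ahler weight, and the identification of the Chern--Weil integral of the limiting projection with the algebraic $\langle\alpha^{n-1}\rangle$-degree of a coherent extension across $E_{nK}(\alpha)$ (with no loss of mass on that locus) is precisely what would have to be established. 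Acknowledging the obstacle is not the same as overcoming it, so the attainment step of your argument is unproven.

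Moreover, the elementary route you explicitly dismiss is the one the paper takes, and your reason for dismissing it is not correct. You argue that $c_1(\mathcal{F})\cdot\langle\alpha^{n-1}\rangle$ may range over a dense subgroup of $\mathbb{R}$, so that no finiteness argument is available. The paper circumvents this in Lemma \ref{good basis} by writing $\langle\alpha^{n-1}\rangle=\sum_i\lambda_iw_i$ with $\lambda_i>0$ and $(w_i)$ a basis of $H^{2n-2}(X,\mathbb{Q})$ consisting of classes each represented by a strictly positive current (possible because $\langle\alpha^{n-1}\rangle\ge\omega^{n-1}$ lies in the interior of the positive cone). Applying Lemma \ref{deg bound} to each positive class $w_i$ separately bounds every pairing $\int_Xc_1(\mathcal{G})\wedge w_i$ from above uniformly in $\mathcal{G}$; restricting, as one may when taking a supremum, to subsheaves with $\deg_{\alpha}(\mathcal{G})\ge -C$ and using $\lambda_i>0$ then bounds each pairing from below as well. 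Since $c_1(\mathcal{G})\in H^2(X,\mathbb{Z})$ and $w_i\in H^{2n-2}(X,\mathbb{Q})$, each pairing lies in a bounded subset of a discrete subgroup of $\mathbb{Q}$ and hence takes only finitely many values; therefore $\deg_{\alpha}(\mathcal{G})$ takes finitely many values and the supremum is attained, with no analysis required. You should either adopt this decomposition argument or supply the substantial missing analytic details of your own.
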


For the proof of Proposition \ref{max-slope}, the following lemma is essential. We recall that, for $\alpha, \beta \in H^{k,k}(X, \mathbb{R})$, the inequality $\alpha \geq \beta$ means that $\alpha - \beta$ is represented by a positive $(k,k)$-current.
\begin{lemm}[see also \cite{Cao}]\label{good basis}
Let $X$ be a compact complex manifold with a K\"{a}hler class $\omega$ and $\alpha \in H^{1,1}(X, \mathbb{R})$ be a big class on $X$. Let $\mathcal{E}$ be a reflexive sheaf on $X$. Then, there is a basis $(w_{i})_{i}$ of $H^{2n-2}(X, \mathbb{Q})$ such that
\begin{enumerate}
\item $\langle \alpha^{n-1} \rangle = \sum_{i}\lambda_{i}w_{i}$ for some $\lambda_{i} > 0$, and
\item each $w_{i}$ is represented by a strictly positive $(2n-2)$-current.
\end{enumerate}
\end{lemm}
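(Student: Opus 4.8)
The plan is to reduce the statement to a single positivity fact about $\langle\alpha^{n-1}\rangle$ together with an elementary piece of convex geometry. First observe that the conclusion forces $\langle\alpha^{n-1}\rangle$ to be strictly positive: if $\langle\alpha^{n-1}\rangle=\sum_i\lambda_iw_i$ with all $\lambda_i>0$ and each $w_i-\varepsilon_i\omega^{n-1}$ pseudo-effective for some $\varepsilon_i>0$, then $\sum_i\lambda_i(w_i-\varepsilon_i\omega^{n-1})$ is again pseudo-effective, so $\langle\alpha^{n-1}\rangle\geq(\sum_i\lambda_i\varepsilon_i)\,\omega^{n-1}$. I claim this necessary condition is also essentially sufficient, so the core of the lemma is to prove that $\langle\alpha^{n-1}\rangle$ is represented by a strictly positive $(n-1,n-1)$-current, i.e. that $\langle\alpha^{n-1}\rangle-c\,\omega^{n-1}$ is pseudo-effective for some $c>0$.

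To prove strict positivity I would use that $\alpha$ is big together with Proposition \ref{positive product}. Since $\alpha$ is big, Demailly regularization provides a K\"ahler current $T\in\alpha$ with analytic singularities (hence small unbounded locus) and $T\geq\delta\omega$ for some $\delta>0$. Writing $T=\delta\omega+S$ with $S:=T-\delta\omega\geq0$ a closed positive $(1,1)$-current with small unbounded locus, the multilinearity and positivity of the non-pluripolar product give, on the complement of the unbounded loci,
\begin{equation*}
\langle T^{n-1}\rangle=\sum_{k=0}^{n-1}\binom{n-1}{k}\langle(\delta\omega)^{k}\wedge S^{n-1-k}\rangle\geq(\delta\omega)^{n-1}=\delta^{n-1}\omega^{n-1},
\end{equation*}
since all mixed terms are closed positive currents and the $k=n-1$ term is the smooth form $\delta^{n-1}\omega^{n-1}$. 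Passing to cohomology and applying Proposition \ref{positive product} (which gives $\{\langle T^{n-1}\rangle\}\leq\langle\alpha^{n-1}\rangle$) yields $\langle\alpha^{n-1}\rangle\geq\{\langle T^{n-1}\rangle\}\geq\delta^{n-1}\{\omega^{n-1}\}$. Hence $\langle\alpha^{n-1}\rangle$ is represented by a strictly positive current. Alternatively one may invoke the monotonicity of positive products from \cite{BEGZ}, comparing $\alpha$ with the nef and big class $\delta\omega$ and using Proposition \ref{nef positive product} to compute $\langle(\delta\omega)^{n-1}\rangle=\delta^{n-1}\omega^{n-1}$.

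It remains to produce the rational basis. Let $\mathcal{C}$ be the cone of classes in $H^{n-1,n-1}(X,\mathbb{R})$ admitting a strictly positive representative; since a positive $(2n-2)$-current is automatically of bidegree $(n-1,n-1)$, all relevant classes lie in this space. The cone $\mathcal{C}$ is open, convex and nonempty ($\omega^{n-1}\in\mathcal{C}$), and by the previous step $\langle\alpha^{n-1}\rangle\in\mathcal{C}$. Choosing a rational class $\beta_0\in\mathcal{C}$ close to $\langle\alpha^{n-1}\rangle$, I would fix a rational linear functional $\ell$ with $\ell(\beta_0)>0$ and pick, inside the affine hyperplane $\{\ell=\ell(\beta_0)\}$, affinely independent rational points $w_1,\dots,w_d$ (with $d=\dim_{\mathbb{R}}H^{n-1,n-1}(X,\mathbb{R})$) close to $\beta_0$, so that they lie in $\mathcal{C}$ and have barycenter $\beta_0$. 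Points lying on a hyperplane missing the origin are linearly independent as soon as they are affinely independent, so $(w_i)$ is a rational basis, and $\beta_0=\frac{1}{d}\sum_iw_i$ lies in the interior of the simplicial cone they generate. Since that interior is open and $\langle\alpha^{n-1}\rangle$ is close to $\beta_0$, we obtain $\langle\alpha^{n-1}\rangle=\sum_i\lambda_iw_i$ with all $\lambda_i>0$, as required.

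The main obstacle is the strict positivity step: one must bound $\langle\alpha^{n-1}\rangle$ from below by a K\"ahler power despite the non-pluripolar product carrying no mass on the non-K\"ahler locus. The key point is that discarding this mass only decreases the class, so the lower bound survives in cohomology through Proposition \ref{positive product}. The bidegree bookkeeping (positivity of $(2n-2)$-currents forces type $(n-1,n-1)$) and the rationality of the basis are then routine once strict positivity is established.
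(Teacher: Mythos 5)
Your proof is correct and follows essentially the same route as the paper: the analytic core in both is to show that $\langle\alpha^{n-1}\rangle$ lies in the interior of the cone of classes represented by positive $(2n-2)$-currents, by taking a K\"ahler current $T\geq\delta\omega$ in $\alpha$ and combining the lower bound $\{\langle T^{n-1}\rangle\}\geq\delta^{n-1}\{\omega^{n-1}\}$ with the monotonicity $\{\langle T^{n-1}\rangle\}\leq\langle\alpha^{n-1}\rangle$ of Proposition \ref{positive product}. The one place where you genuinely diverge is the linear-algebra step: the paper merely picks an arbitrary rational basis inside the interior of the positive cone and asserts the coefficients $\lambda_i$ are positive, which does not follow (a basis of interior points need not express a given interior point with positive coordinates), whereas your construction of a rational simplex with barycenter at a rational approximation $\beta_0$ of $\langle\alpha^{n-1}\rangle$ actually delivers property (1). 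Just be careful with the order of quantifiers there: fix a ball $B(\langle\alpha^{n-1}\rangle,r)$ contained in the open cone first, take $\beta_0$ within distance much smaller than $r$, and choose the simplex at scale comparable to $r$ (not shrinking with $\beta_0$), so that the open simplicial cone it spans contains a neighborhood of $\beta_0$ large enough to capture $\langle\alpha^{n-1}\rangle$.
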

\begin{proof}
We set a closed cone $P$ of $H^{2n-2}(X, \mathbb{R})$ as follows:
\begin{equation}
P := \{\varphi \in H^{2n-2}(X, \mathbb{R}) \mid \text{$\varphi$ is represented by a closed positive $2n-2$-current}\}.
\end{equation}
Since the internal of $P$, denoted by ${\rm{Int}}(P)$, is nonempty and open in $H^{2n-2}(X,\mathbb{R})$, we an choose a basis $(w_1,\ldots,w_s)$ of $H^{2n-2}(X,\mathbb{R})$ so that each $w_i$ lies in ${\rm{Int}}(P)\cap H^{2n-2}(X,\mathbb{Q})$. Since $\alpha$ is big, $\langle\alpha^{n-1}\rangle$ lies in ${\rm{Int}}(P)$. In fact, let $T$ be a K\"{a}hler current in $\alpha$ with $T\ge \varepsilon\omega$ where $\omega$ is a K\"{a}hler metric and $\varepsilon>0$. Then, by \cite{BEGZ}, we have
$$
\langle\alpha^{n-1}\rangle\ge \{\langle T^{n-1}\rangle\}\ge \omega^{n-1}.
$$
Hence $\eta:=\langle\alpha^{n-1}\rangle-\omega^{n-1}$ is represented by a positive current. Thus $\langle\alpha^{n-1}\rangle=\omega^{n-1}+\eta$ is represented by a strictly positive current. 
\end{proof}

\begin{proof}[Proof of Lemma \ref{max-slope}]
We show that the function $\mathcal{G}\mapsto \deg_{\alpha}(\mathcal{G})$ does not have accumulation points, where $\mathcal{G}$ is a reflexive subsheaf of $\mathcal{E}$ with $0\ne \mathcal{G}\subsetneq \mathcal{E}$.

Let $\omega$ be a K\"{a}hler class on $X$. Let $\mathcal{G} \subset \mathcal{E}$ be any nontrivial reflexive subsheaf. We choose a basis $(w_{i})$ of $H^{2(n-1)}(X, \mathbb{Q})$ as in Lemma \ref{good basis}. Then we have 
\begin{align}\label{eq6.15}
deg_{\alpha}(\mathcal{G}) 
&= \int_Xc_{1}(\mathcal{G})\wedge\langle\alpha^{n-1}\rangle \notag \\
&= \lambda_{1}\int_Xc_{1}(\mathcal{G})\wedge w_{1} + \cdots + \lambda_{s}\int_Xc_{1}(\mathcal{G})\wedge w_{s}. 
\end{align}
Here $\lambda_{i}>0$ are the coefficients of $\langle \alpha^{n-1} \rangle$ as in Lemma \ref{good basis}. By Lemma \ref{deg bound}, there is a constant $C > 0$ such that 
\begin{equation}
deg_{\alpha}(\mathcal{G}) \leq C
\end{equation}
for any $\mathcal{G}$.
It suffices to consider reflexive subsheaves $0\neq \mathcal{G} \subsetneq \mathcal{E}$ satisfying
\begin{equation}\label{eq6.17}
-C \leq deg_{\alpha}(\mathcal{G}) 
\end{equation}
as we are taking the supremum of the slope.
Again by Lemma \ref{deg bound}, we have
\begin{equation}\label{6.18}
\int_Xc_{1}(\mathcal{G})\wedge w_{i} \leq C' 
\end{equation}
for any $i$ and $\mathcal{G}$
since each $w_{i}$ is represented by a closed positive current and thus by the proof of Lemma \ref{deg bound}. Then, from $\lambda_{i}\ge0$, $(\ref{eq6.15})$ and $(\ref{eq6.17})$, we obtain
$$
-C\le deg_{\alpha}(\mathcal{G})\le C_1+\cdots +\lambda_s\int_Xc_1(\mathcal{G})\wedge w_s+\cdots+C_s
$$
and thus
\begin{equation}\label{eq6.19}
-C'' \leq \int_Xc_{1}(\mathcal{G})\wedge w_{i}
\end{equation}
 for any $i$ and $\mathcal{G}$.
We recall that $w_{i} \in H^{2n-2}(X, \mathbb{Q})$ and $c_{1}(\mathcal{G}) \in H^{2}(X, \mathbb{Z})$. Thus, by (\ref{6.18}) and (\ref{eq6.19}), we know the set 
$$
A :=\{\int_Xc_{1}(\mathcal{G})\wedge w_{i} \in \mathbb{R} \mid 1 \leq i \leq s, \text{ reflexive subsheaf $0\neq \mathcal{G} \subsetneq \mathcal{E}$} \hbox{ with }(\ref{eq6.17})\}
$$ 
is a finite set. Hence we can see the $\langle\alpha^{n-1}\rangle$-degree function $\deg_{\alpha}$ as a function on a finite set $A$, and thus there is a nontrivial reflexive subsheaf $\mathcal{F}_{\alpha}\subsetneq\mathcal{E}$ which attains the maximum of $\langle\alpha^{n-1}\rangle$-slope.
\end{proof}

Let $X$ be a compact K\"{a}hler manifold and $\alpha$ be a big class on $X$. For $k=1,\ldots,n$, we define 
$$
\mathcal{P}_k:=\{\beta\in H^{1,1}(X,\mathbb{R}) \hbox{ $\mid$ $\beta$ is big and $\langle\beta^k\rangle-\langle\alpha^k\rangle$ is represented by a positive $(k,k)$-current}\}.
$$
\begin{prop}\label{stability app}
    Let $X$ be a compact K\"{a}hler manifold and $\alpha$ be a big class on $X$. 
    If a reflexive sheaf $\mathcal{E}$ on $X$ is $\langle\alpha^{n-1}\rangle$-stable, then there exists $U_{\alpha}\subset \mathcal{P}_{n-1}$ a neighborhood of $\alpha$ in $\mathcal{P}_{n-1}$ such that 
    $\mathcal{E}$ is $\langle\beta^{n-1}\rangle$-stable for any $\beta\in U_{\alpha}$.
\end{prop}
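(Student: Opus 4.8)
The plan is to upgrade the strict stability inequality, which a priori holds separately for each subsheaf, into a single uniform gap, and then to show that passing from $\alpha$ to a nearby $\beta\in\mathcal{P}_{n-1}$ perturbs every slope by a uniformly small amount. First I would apply Lemma \ref{max-slope} to $\mathcal{E}$ to obtain a nontrivial reflexive subsheaf $\mathcal{F}_\alpha\subsetneq\mathcal{E}$ with
$$
\mu_\alpha(\mathcal{F}_\alpha)=\sup\{\mu_\alpha(\mathcal{F})\mid 0\neq\mathcal{F}\subsetneq\mathcal{E}\text{ reflexive}\}.
$$
Because $\mathcal{E}$ is $\langle\alpha^{n-1}\rangle$-stable, this supremum lies strictly below $\mu_\alpha(\mathcal{E})$, so
$$
\delta:=\mu_\alpha(\mathcal{E})-\mu_\alpha(\mathcal{F}_\alpha)>0,
$$
and by maximality $\mu_\alpha(\mathcal{F})\le\mu_\alpha(\mathcal{E})-\delta$ for \emph{every} nontrivial reflexive subsheaf $\mathcal{F}$. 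This uniform gap $\delta$ is precisely what lets the argument survive a perturbation of the class.

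Next, fix a Kähler class $\omega$ and a smooth Hermitian metric $h_0$ on $\mathcal{E}$. For $\beta\in\mathcal{P}_{n-1}$ put $\gamma:=\langle\beta^{n-1}\rangle-\langle\alpha^{n-1}\rangle$; by the very definition of $\mathcal{P}_{n-1}$ this class is represented by a positive $(n-1,n-1)$-current, and I write $m(\gamma):=\int_X\omega\wedge\frac{\gamma}{(n-1)!}\ge0$ for its $\omega$-mass. For any nontrivial reflexive $\mathcal{F}\subset\mathcal{E}$ the slopes split as
$$
\mu_\beta(\mathcal{F})-\mu_\beta(\mathcal{E})=\big(\mu_\alpha(\mathcal{F})-\mu_\alpha(\mathcal{E})\big)+\frac{\deg(\mathcal{F},\gamma)}{\rk(\mathcal{F})}-\frac{\deg(\mathcal{E},\gamma)}{\rk(\mathcal{E})},
$$
where $\deg(\,\cdot\,,\gamma)$ is the $\gamma$-degree of Lemma \ref{deg bound}. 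The first term is $\le-\delta$ by the previous step. Applying Lemma \ref{deg bound} to the positive class $\gamma$ produces a constant $C>0$, depending only on $(\mathcal{E},h_0,\omega)$ and \emph{not} on $\mathcal{F}$, with $\deg(\mathcal{F},\gamma)\le C\,m(\gamma)$; since $\rk(\mathcal{F})\ge1$ the same bound controls $\deg(\mathcal{F},\gamma)/\rk(\mathcal{F})$. The remaining term is a fixed linear functional of $\gamma$ and obeys $|\deg(\mathcal{E},\gamma)|\le C'm(\gamma)$ for a constant $C'$, by pairing the fixed class $c_1(\det\mathcal{E})$ against the positive current $\gamma$. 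Hence, uniformly over all $\mathcal{F}$,
$$
\mu_\beta(\mathcal{F})-\mu_\beta(\mathcal{E})\le-\delta+(C+C')\,m(\gamma).
$$

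Finally, I would invoke the continuity of the positive product $\beta\mapsto\langle\beta^{n-1}\rangle$ on the big cone \cite{BEGZ}. As $\beta\to\alpha$ within $\mathcal{P}_{n-1}$ this forces $\langle\beta^{n-1}\rangle\to\langle\alpha^{n-1}\rangle$, hence $m(\gamma)\to0$. Choosing $U_\alpha\subset\mathcal{P}_{n-1}$ to be a neighborhood of $\alpha$ on which $(C+C')\,m(\gamma)<\delta/2$, we obtain $\mu_\beta(\mathcal{F})-\mu_\beta(\mathcal{E})\le-\delta/2<0$ for every $\beta\in U_\alpha$ and every nontrivial reflexive subsheaf $\mathcal{F}$, that is, $\mathcal{E}$ is $\langle\beta^{n-1}\rangle$-stable, as desired.

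The hard part is not any single estimate but securing uniformity: a priori there are infinitely many competing subsheaves $\mathcal{F}$, and under bare stability their slopes may accumulate at $\mu_\alpha(\mathcal{E})$ from below, leaving no room for a perturbation. The two lemmas are exactly what closes this gap --- Lemma \ref{max-slope} guarantees the spectral gap $\delta$ is attained and hence strictly positive, while Lemma \ref{deg bound} guarantees that the degree change induced by the positive correction $\gamma$ is bounded by $C\,m(\gamma)$ simultaneously for all $\mathcal{F}$. The one genuinely external ingredient is the continuity of the positive product near $\alpha$, which supplies $m(\gamma)\to0$.
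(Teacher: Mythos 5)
Your proposal is correct and follows essentially the same route as the paper: the uniform gap $\delta$ from the maximal-slope subsheaf of Lemma \ref{max-slope}, the uniform bound on $\deg(\mathcal{F},\gamma)$ from the estimate in Lemma \ref{deg bound}, and the resulting lower bound $\mu_\beta(\mathcal{E})-\mu_\beta(\mathcal{F})\ge \delta - (\text{const})\cdot m(\gamma)$ are exactly the paper's argument. The only (harmless) difference is that you make explicit the continuity of $\beta\mapsto\langle\beta^{n-1}\rangle$ needed to see that the resulting set $U_\alpha$ is indeed a neighborhood of $\alpha$, a point the paper leaves implicit in its definition of $U_\alpha$.
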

\begin{proof}
    Let $\beta\in\mathcal{P}_{n-1}$ and set $\gamma:=\langle\beta^{n-1}\rangle-\langle\alpha^{n-1}\rangle$ which is represented by a positive $(n-1,n-1)$-current. Let $\mathcal{F}\subsetneq\mathcal{E}$ be any nontrivial reflexive subsheaf. Since $1\le \rk(\mathcal{F})\le \rk\mathcal{E}-1$, there is a constant $C>0$ independent of $\mathcal{F}$ and $\beta$ such that 
    \begin{equation}
        \frac{\deg(\mathcal{F}, \gamma)}{\rk\mathcal{F}}\le C\int_{X}\omega\wedge\gamma.
    \end{equation}
    by (\ref{deg bound eq}). Then we have
    \begin{align}
        \mu_{\beta}(\mathcal{E})-\mu_{\beta}(\mathcal{F})
        &= \mu_{\alpha}(\mathcal{E})-\mu_{\alpha}(\mathcal{F})+\left(\frac{\deg(\mathcal{E},\gamma)}{\rk\mathcal{E}}-\frac{\deg(\mathcal{F},\gamma)}{\rk\mathcal{F}} \right)\notag\\
        &\ge \mu_{\alpha}(\mathcal{E})-\mu_{\alpha}(\mathcal{F}_{\alpha})+\frac{1}{\rk\mathcal{E}}\left(\int_{X}c_1(\mathcal{E})\wedge\gamma-C\int_{X}\omega\wedge\gamma \right).
    \end{align}
    Here $\mathcal{F}_{\alpha}$ be a nontrivial reflexive subsheaf of $\mathcal{E}$ with maximal $\langle\alpha^{n-1}\rangle$-slope (see Lemma \ref{max-slope}).
    Since $\mathcal{E}$ is $\langle\alpha^{n-1}\rangle$-slope stable, we have $\mu_{\alpha}(\mathcal{E})-\mu_{\alpha}(\mathcal{F}_{\alpha})>0$.
    We set 
    \begin{equation}\label{stability app eq}
    U_{\alpha}:=\{\beta\in \mathcal{P}_{n-1}\mid 
    \mu_{\alpha}(\mathcal{E})-\mu_{\alpha}(\mathcal{F}_{\alpha})>\frac{1}{\rk\mathcal{E}}\int_X(C\omega-c_1(\mathcal{E}))\wedge(\langle\beta^{n-1}\rangle-\langle\alpha^{n-1}\rangle)\},
    \end{equation}
    Then we have $\mu_{\beta}(\mathcal{E})-\mu_{\beta}(\mathcal{F})>0$ for any $\beta\in U_{\alpha}$ and thus $\mathcal{E}$ is $\langle\beta^{n-1}\rangle$-stable for any $\beta\in U_{\alpha}$.
\end{proof}
\begin{corr}\label{stability app cor}
    Let $X$ be a compact K\"{a}hler manifold, $\omega$ be a K\"{a}hler class and $\alpha$ be a big class on $X$. 
    If a reflexive sheaf $\mathcal{E}$ on $X$ is $\langle\alpha^{n-1}\rangle$-slope stable, then $\mathcal{E}$ is also $\langle(\alpha+\varepsilon\omega)^{n-1}\rangle$-slope stable for sufficiently small $\varepsilon>0$.
\end{corr}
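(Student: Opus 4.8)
The plan is to deduce this corollary directly from Proposition \ref{stability app}, by checking that the classes $\alpha + \varepsilon\omega$ enter the neighborhood $U_\alpha$ as $\varepsilon \to 0^+$.

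First I would verify that $\alpha + \varepsilon\omega \in \mathcal{P}_{n-1}$ for every $\varepsilon > 0$. Bigness is immediate, since $\alpha$ is big and $\varepsilon\omega$ is Kähler, so their sum with positive coefficients is big. For the positivity requirement defining $\mathcal{P}_{n-1}$, note that $(\alpha+\varepsilon\omega) - \alpha = \varepsilon\omega$ is represented by a Kähler form; hence by monotonicity of the positive product \cite{BEGZ} one has $\langle(\alpha+\varepsilon\omega)^{n-1}\rangle \geq \langle\alpha^{n-1}\rangle$, so their difference is represented by a positive $(n-1,n-1)$-current, as needed.

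The main step is then to show that $\alpha+\varepsilon\omega \in U_\alpha$ for all small $\varepsilon > 0$. Recalling the definition of $U_\alpha$ in (\ref{stability app eq}), its defining inequality has a left-hand side $\mu_\alpha(\mathcal{E}) - \mu_\alpha(\mathcal{F}_\alpha)$ which is a fixed strictly positive constant, by the $\langle\alpha^{n-1}\rangle$-stability of $\mathcal{E}$ together with the existence of a maximal-slope subsheaf from Lemma \ref{max-slope}. The right-hand side is a fixed linear functional applied to the class $\langle(\alpha+\varepsilon\omega)^{n-1}\rangle - \langle\alpha^{n-1}\rangle$. I would invoke the continuity of the positive product along the big cone \cite{BEGZ}: as $\varepsilon \to 0^+$, we have $\langle(\alpha+\varepsilon\omega)^{n-1}\rangle \to \langle\alpha^{n-1}\rangle$ in $H^{n-1,n-1}(X,\mathbb{R})$. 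Consequently the right-hand side tends to $0$, so the defining inequality of $U_\alpha$ holds for all sufficiently small $\varepsilon > 0$. Proposition \ref{stability app} then yields the $\langle(\alpha+\varepsilon\omega)^{n-1}\rangle$-stability of $\mathcal{E}$.

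The bulk of the work is already contained in Proposition \ref{stability app}, so this corollary is essentially a limiting case of it. The only genuine analytic input is the convergence $\langle(\alpha+\varepsilon\omega)^{n-1}\rangle \to \langle\alpha^{n-1}\rangle$, that is, the continuity of the positive product as a function of the big class; this is the main (though by now standard) obstacle, and everything else is formal bookkeeping.
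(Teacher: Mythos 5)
Your proposal is correct and follows exactly the route the paper intends: the corollary is stated as an immediate consequence of Proposition \ref{stability app}, and your verification that $\alpha+\varepsilon\omega\in\mathcal{P}_{n-1}$ (via monotonicity of positive products) and that $\alpha+\varepsilon\omega$ enters $U_{\alpha}$ for small $\varepsilon$ (via continuity of $\beta\mapsto\langle\beta^{n-1}\rangle$ on the big cone) supplies precisely the bookkeeping the paper leaves implicit.
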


\subsection{Bogomolov-Gieseker inequality for big and nef class}
We recall that if $\alpha$ is nef and big, then $\langle\alpha^p\rangle=\alpha^p$ for any $p=1,\ldots,n$ (Proposition \ref{nef positive product}).
The Bogomolov-Gieseker inequality is a direct consequence of Proposition \ref{stability app cor}.
\begin{prop}\label{BG nef big}
Let $X$ be a compact normal space with a nef and big class $\alpha \in H^{1,1}_{BC}(X, \mathbb{R})$. Let $\mathcal{E}$ be a reflexive sheaf with $\rk\mathcal{E}=r$ on $X$ and $\pi : \widehat{X} \rightarrow X$ be a resolution of singularities of $X$. Suppose $\mathcal{E}$ is $\alpha^{n-1}$-slope stable. Then, the following Bogomolov-Gieseker inequality holds:
\begin{equation}\label{6.8}
(2rc_{2}(\pi^{[*]}{\mathcal{E}}) - (r-1)c_{1}(\pi^{[*]}{\mathcal{E}})^{2})\cdot(\pi^{*}\alpha)^{n-2}\geq 0.
\end{equation}
\end{prop}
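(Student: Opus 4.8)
The plan is to transfer the problem to the smooth resolution $\widehat{X}$ and to deduce the inequality from the classical Bogomolov--Gieseker inequality for slope-stable sheaves with respect to a genuine K\"ahler class, after perturbing $\pi^{*}\alpha$ into the K\"ahler cone and passing to a limit. Since $\alpha$ is nef and big, $\pi^{*}\alpha$ is nef and big on the smooth compact K\"ahler manifold $\widehat{X}$, and Proposition \ref{nef positive product} gives $\langle(\pi^{*}\alpha)^{p}\rangle=(\pi^{*}\alpha)^{p}$ for all $p$. By Definition \ref{stability normal defi}, the hypothesis that $\mathcal{E}$ is $\alpha^{n-1}$-slope stable means precisely that $\pi^{[*]}\mathcal{E}$ is $\langle(\pi^{*}\alpha)^{n-1}\rangle$-slope stable, so the entire problem now lives on the smooth K\"ahler manifold $\widehat{X}$ and it suffices to establish the stated inequality there.

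Next I would fix a K\"ahler class $\omega$ on $\widehat{X}$. For every $\varepsilon>0$ the class $\pi^{*}\alpha+\varepsilon\omega$ is K\"ahler, being the sum of a nef class and a K\"ahler class it contains a strictly positive smooth representative, and hence $\langle(\pi^{*}\alpha+\varepsilon\omega)^{n-1}\rangle=(\pi^{*}\alpha+\varepsilon\omega)^{n-1}$ again by Proposition \ref{nef positive product}. By the openness of stability, Corollary \ref{stability app cor}, there is some $\varepsilon_{0}>0$ such that $\pi^{[*]}\mathcal{E}$ remains $\langle(\pi^{*}\alpha+\varepsilon\omega)^{n-1}\rangle$-slope stable for all $0<\varepsilon<\varepsilon_{0}$; equivalently, $\pi^{[*]}\mathcal{E}$ is slope stable with respect to the honest K\"ahler class $\pi^{*}\alpha+\varepsilon\omega$ in the usual sense.

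At this stage I would invoke the classical Bogomolov--Gieseker inequality for a reflexive sheaf that is slope stable with respect to a K\"ahler class on a compact K\"ahler manifold, which follows from the existence of an admissible Hermitian--Einstein metric together with Chern--Weil theory (see \cite{UY}, \cite{BS}, \cite{Chen}). Applied to $\pi^{[*]}\mathcal{E}$ and the K\"ahler class $\pi^{*}\alpha+\varepsilon\omega$, it yields, for each $0<\varepsilon<\varepsilon_{0}$,
$$
\left(2rc_{2}(\pi^{[*]}\mathcal{E})-(r-1)c_{1}(\pi^{[*]}\mathcal{E})^{2}\right)\cdot(\pi^{*}\alpha+\varepsilon\omega)^{n-2}\geq 0.
$$
Expanding $(\pi^{*}\alpha+\varepsilon\omega)^{n-2}$ as a polynomial in $\varepsilon$, the left-hand side is a polynomial in $\varepsilon$ with real coefficients that is nonnegative on $(0,\varepsilon_{0})$, and letting $\varepsilon\to 0^{+}$ produces exactly the claimed inequality paired against $(\pi^{*}\alpha)^{n-2}$.

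The conceptual work has already been carried out in Corollary \ref{stability app cor}, so I do not expect a serious obstacle here. The two points to get right are that the perturbation $\pi^{*}\alpha+\varepsilon\omega$ genuinely lands in the K\"ahler cone of the smooth resolution, so that the classical K\"ahler-class Bogomolov--Gieseker inequality and the underlying Hermitian--Einstein theory apply without modification, and that the nonnegativity is preserved under the limit $\varepsilon\to 0$. The latter is immediate from the continuity, indeed the polynomiality, of the cup-product pairing in $\varepsilon$, so the only real care needed is in the reduction to the smooth K\"ahler resolution and in checking the membership of the perturbed class in the K\"ahler cone.
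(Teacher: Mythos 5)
Your proposal is correct and follows essentially the same route as the paper: perturb $\pi^{*}\alpha$ by $\varepsilon\omega$ into the K\"ahler cone, invoke the openness of stability (Corollary \ref{stability app cor}) to keep $\pi^{[*]}\mathcal{E}$ slope stable for the K\"ahler class $\pi^{*}\alpha+\varepsilon\omega$, apply the classical Bogomolov--Gieseker inequality there, and let $\varepsilon\to 0$. The paper's proof is just a terser version of the same argument; your additional remarks on why the perturbed class is K\"ahler and why the limit preserves the inequality are accurate.
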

\begin{proof}
Let $\eta$ be a K\"{a}hler class on $\widehat{X}$. By Corollary \ref{stability app cor}, the reflexive sheaf $\pi^{[*]}{\mathcal{E}}$ is $\alpha_{\varepsilon}:=(\pi^*\alpha+\varepsilon\omega)$-stable for any $\varepsilon>0$. Hence the Bogomolov-Gieseker inequality of $\pi^{[*]}{\mathcal{E}}$ holds with respect to $\alpha_{\varepsilon}$. Then the result (\ref{6.8}) follows by taking a limit $\varepsilon \rightarrow 0$. 
\end{proof}

\begin{lemm}\label{BG lemma}
Let $X$ be a compact normal space, $\alpha\in H^{1,1}_{BC}(X,\mathbb{R})$ be a nef and big class and $\mathcal{E}$ be a reflexive sheaf on $X$. If $X$ is smooth in codimension 2, then 
$$
\Delta(\mathcal{E})\alpha^{n-2}:=(2rc_2(\mu^{[*]}\mathcal{E})-(r-1)c_1(\mu^{[*]}\mathcal{E}))^2\cdot(\mu^*\alpha)^{n-2}
$$
is independent of the choices of resolutions $\mu:\widehat{X}\to X$. 
\end{lemm}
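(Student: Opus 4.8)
The strategy is to pull both computations up to a common resolution and to observe that, away from $X_{\sing}\cup\Sing(\mathcal{E})$, the reflexive pullback agrees with the ordinary pullback; the smooth-in-codimension-$2$ hypothesis makes this bad locus have codimension $\ge 3$, which is exactly what Proposition \ref{nef big singular exceptional} needs in order to annihilate the resulting correction terms against a power $(\nu^{*}\alpha)^{n-2}$. Concretely, given two resolutions $\mu_1:X_1\to X$ and $\mu_2:X_2\to X$ I would choose a smooth Kähler space $Y$ with morphisms $p_i:Y\to X_i$ and $\nu:=\mu_1\circ p_1=\mu_2\circ p_2$. Since $c_j(p_i^{*}\mathcal{H})=p_i^{*}c_j(\mathcal{H})$ and $p_{i*}[Y]=[X_i]$, the projection formula gives $\Delta(\mu_i^{[*]}\mathcal{E})\cdot(\mu_i^{*}\alpha)^{n-2}=\int_Y\Delta\big(p_i^{*}\mu_i^{[*]}\mathcal{E}\big)\wedge(\nu^{*}\alpha)^{n-2}$, so it suffices to show that the integrand built from $\nu^{[*]}\mathcal{E}$ equals the one built from $p_i^{*}\mu_i^{[*]}\mathcal{E}$; by transitivity this reduces everything to a single comparison of classes on $Y$.

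Next I would locate the difference. Away from $X_{\sing}\cup\Sing(\mathcal{E})$ the space $X$ is smooth and $\mathcal{E}$ is locally free, so $\nu^{*}\mathcal{E}$ is locally free—hence reflexive—there, whence $\nu^{[*]}\mathcal{E}\simeq\nu^{*}\mathcal{E}\simeq p_i^{*}\mu_i^{[*]}\mathcal{E}$ over $\nu^{-1}\big(X\setminus(X_{\sing}\cup\Sing(\mathcal{E}))\big)$. Consequently $D:=c_1(\nu^{[*]}\mathcal{E})-c_1(p_i^{*}\mu_i^{[*]}\mathcal{E})$ and $C:=c_2(\nu^{[*]}\mathcal{E})-c_2(p_i^{*}\mu_i^{[*]}\mathcal{E})$ are, respectively, a $\nu$-exceptional divisor and a class in $H^{2,2}(Y)$, both supported on $\nu^{-1}(X_{\sing}\cup\Sing(\mathcal{E}))$. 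Expanding the discriminant via $c_1(\nu^{[*]}\mathcal{E})^2=c_1(p_i^{*}\mu_i^{[*]}\mathcal{E})^2+2\,c_1(p_i^{*}\mu_i^{[*]}\mathcal{E})\cdot D+D^2$ and using $\nu_{*}D=0$, the two integrands differ by $\big(2rC-(r-1)D^2\big)\wedge(\nu^{*}\alpha)^{n-2}$, the cross term vanishing by the projection formula.

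It then remains to kill $(\nu^{*}\alpha)^{n-2}\cdot D^2$ and $(\nu^{*}\alpha)^{n-2}\cdot C$, and here the hypothesis enters decisively: because $X$ is smooth in codimension $2$ and $\mathcal{E}$ is reflexive, $\mathcal{E}$ is locally free in codimension $2$, so $X_{\sing}\cup\Sing(\mathcal{E})$ has codimension $\ge 3$ in $X$. Thus the divisor $D$ and the support of $C$ map into a set of dimension $\le n-3$, and applying Proposition \ref{nef big singular exceptional} with $\pi=\nu$, $r=3$ and $k=1$ to the $(2,2)$-classes $\tau=D^2$ and $\tau=C$ yields $(\nu^{*}\alpha)^{\,n-3+1}\cdot\tau=(\nu^{*}\alpha)^{n-2}\cdot\tau=0$. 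Hence the two integrands integrate to the same number and the invariant is independent of the resolution.

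The hard part will be the $c_2$-term: one must justify that the jump $C$ of the second Chern class under the reflexive hull is an honest cohomology class supported on $\nu^{-1}(X_{\sing}\cup\Sing(\mathcal{E}))$ and then control it there. This is precisely where smooth-in-codimension-$2$ cannot be dropped, for along a codimension-$2$ singular or non-locally-free stratum Proposition \ref{nef big singular exceptional} is only available with $r=2$, which is too weak to annihilate a class in $H^{2,2}$; if one prefers not to invoke the stated proposition for a possibly non-divisorial support, one can instead run the restriction-of-forms argument of Lemma \ref{split of class} directly over $\nu^{-1}(X_{\sing}\cup\Sing(\mathcal{E}))$, where $(\nu^{*}\alpha)^{n-2}$ restricts to the pullback of an $(n-2,n-2)$-form on a set of dimension $\le n-3$ and hence vanishes. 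The remaining verifications—that $\mu_i^{*}\alpha$ is nef and big and that the reflexive pullback is functorial up to the isomorphisms used above—are routine.
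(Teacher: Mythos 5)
Your proof is correct and follows essentially the same route as the paper's: pass to a common resolution, observe that the Chern-class discrepancies between the various pullbacks of $\mathcal{E}$ are supported over the codimension-$\ge 3$ locus $X_{\sing}\cup\Sing(\mathcal{E})$, and annihilate them against $(\nu^*\alpha)^{n-2}$ via Proposition \ref{nef big singular exceptional} with $r=3$, $k=1$. The only steps stated too quickly are the identity $c_j(p_i^{*}\mathcal{H})=p_i^{*}c_j(\mathcal{H})$ for the non-locally-free sheaf $\mathcal{H}=\mu_i^{[*]}\mathcal{E}$ and the cross-term cancellation (where $p_i^{*}c_1(\mu_i^{[*]}\mathcal{E})$ is not a $\nu$-pullback, so the projection formula alone does not apply); in both cases the error terms are again $(2,2)$-classes supported over the same codimension-$\ge 3$ locus and are killed by the identical application of Proposition \ref{nef big singular exceptional}, so the argument goes through.
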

\begin{proof}
We remark that $c_2(\mu^{[*]}\mathcal{E})-c_2(\mu^*\mathcal{E}) \in H^{2,2}(\widehat{X},\mathbb{R})$ is supported in the $\mu$-exceptional divisor $D$. We also remark that $\dim_X(\mu(D))\le n-3$ since $\mu(D)=X_{\sing}$ and $\codim_XX_{\sing}\ge 3$. Therefore, by Proposition \ref{nef big singular exceptional}, we obtain
$$
(c_2(\mu^{[*]}\mathcal{E})-c_2(\mu^*\mathcal{E}))\cdot(\mu^*\alpha)^{n-2}=0.
$$
If we choose a further modification $\nu:Y\to \widehat{X}$ with $Y$ smooth, then 
$$
\nu^*\left(c_2(\mu^*\mathcal{E})\cdot(\mu^*\alpha)^{n-2}\right) 
=c_2(\nu^*\mu^*\mathcal{E})\cdot(\nu^*\mu^*\alpha)^{n-2}
$$
The same calculation works for the first chern class.
Then we can easily see that the RHS of $\Delta(\mathcal{E})\alpha^{n-2}$ is independent of the choices of resolutions $\mu:\widehat{X}\to X$.
\end{proof}

By Proposition \ref{BG nef big} and Lemma \ref{BG lemma}, we obtain the following.
\begin{corr}\label{BG nef big 2}
Let $X$ be a compact normal space, $\alpha\in H^{1,1}_{BC}(X,\mathbb{R})$ be a nef and big class and $\mathcal{E}$ be a reflexive sheaf on $X$. Suppose $X$ is smooth in codimension 2 and $\mathcal{E}$ is $\alpha^{n-1}$-stable. Then the Bogomolov-Gieseker inequality holds:
$$
\Delta(\mathcal{E})\alpha^{n-2}\ge 0.
$$
\end{corr}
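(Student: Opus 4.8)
The plan is to recognize that this corollary is an immediate synthesis of the two results stated just before it, so the only real work is to check that their hypotheses line up and that their conclusions compose. There is no new geometry to introduce.

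First I would invoke Lemma \ref{BG lemma} to make sense of the symbol $\Delta(\mathcal{E})\alpha^{n-2}$. A priori this number is defined through a choice of resolution $\mu:\widehat{X}\to X$ by the expression $\big(2rc_2(\mu^{[*]}\mathcal{E})-(r-1)c_1(\mu^{[*]}\mathcal{E})^2\big)\cdot(\mu^*\alpha)^{n-2}$, and it is exactly the hypothesis that $X$ is smooth in codimension $2$ (so $\codim_X X_{\sing}\ge 3$) that lets Lemma \ref{BG lemma} conclude, via Proposition \ref{nef big singular exceptional}, that the value is independent of $\mu$. Thus $\Delta(\mathcal{E})\alpha^{n-2}$ is a genuine invariant of the pair $(\mathcal{E},\alpha)$, and we are free to compute it using any single resolution we like.

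Next I would fix one resolution $\pi:\widehat{X}\to X$ and apply Proposition \ref{BG nef big} directly. Since $\alpha$ is nef and big and $\mathcal{E}$ is $\alpha^{n-1}$-slope stable, that proposition gives $\big(2rc_2(\pi^{[*]}\mathcal{E})-(r-1)c_1(\pi^{[*]}\mathcal{E})^2\big)\cdot(\pi^*\alpha)^{n-2}\ge 0$. The left-hand side is precisely the defining expression for $\Delta(\mathcal{E})\alpha^{n-2}$ evaluated on the resolution $\pi$; combining this with the well-definedness just established, the inequality passes to the invariant quantity, yielding $\Delta(\mathcal{E})\alpha^{n-2}\ge 0$. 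This closes the argument in essentially two lines.

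I do not expect any genuine obstacle here, because all the substantive content has already been discharged upstream. The real analytic input—the openness of slope stability under perturbing the nef and big class by a small Kähler class (Corollary \ref{stability app cor}), which reduces the nef-big case to the classical Kähler Bogomolov-Gieseker inequality and then lets one pass to the limit $\varepsilon\to 0$—is packaged inside Proposition \ref{BG nef big}, and the delicate birational bookkeeping controlling how $c_2$ behaves under resolutions is packaged inside Lemma \ref{BG lemma}. Consequently the only thing I would actually verify is the formal identification of the two expressions, so the proof is a short assembly rather than a computation.
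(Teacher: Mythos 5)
Your proposal is correct and matches the paper exactly: the paper derives this corollary by combining Proposition \ref{BG nef big} (the inequality on a fixed resolution) with Lemma \ref{BG lemma} (independence of the resolution, using that $X$ is smooth in codimension $2$), which is precisely your two-step assembly.
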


We obtain the characterization of the equality on minimal projective varieties of general type. This is essentially due to \cite{Chen}. See also \cite{GKPT}. See Definition \ref{nonKahler locus singular} for the definition of the ample locus on singular spaces. The reader can consult \cite{Chen} about the Bogomolov-Gieseker inequality on compact normal K\"{a}hler spaces.
\begin{theo}\label{BG equality}
Let $X$ be a normal projective variety with log canonical singularities where $K_X$ is nef and big. Let $\mathcal{E}$ be a reflexive sheaf on $X$. Suppose $\mathcal{E}$ is $c_1(K_X)^{n-1}$-stable. If there exists a resolution $\pi:Y\to X$ such that $\pi^{[*]}\mathcal{E}$ satisfies the Bogomolov-Gieseker equality: $\Delta(\pi^{[*]}\mathcal{E})c_1(\pi^*K_X)^{n-2}=0$, then $\mathcal{E}$ is projectively flat on $\Amp(K_X)$. 
\end{theo}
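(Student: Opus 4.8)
The plan is to deduce the equality case from the existence of a Hermitian--Einstein metric (the hard direction of the Kobayashi--Hitchin correspondence) combined with a Chern--Weil, Lübke-type identity, all carried out on the ample locus where $K_X$ is genuinely Kähler.

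First I would fix the resolution $\pi:Y\to X$ provided by the hypothesis, set $r:=\rk\mathcal{E}$, and record two reductions. Since $K_X$ is nef and big, $\pi^*K_X$ is nef and big on $Y$, so by Proposition \ref{nef positive product} the positive products collapse to honest intersection products, $\langle(\pi^*K_X)^p\rangle=(\pi^*K_X)^p$; and by Definition \ref{stability normal defi} the $c_1(K_X)^{n-1}$-stability of $\mathcal{E}$ means exactly that $\pi^{[*]}\mathcal{E}$ is $(\pi^*K_X)^{n-1}$-stable. Because $E_{nK}(K_X)\supset X_{\sing}$, the ample locus $\Amp(K_X)$ lies in $X_{\reg}$ and $\pi$ is an isomorphism over it; write $\omega_0$ for the smooth Kähler metric representing $K_X$ on $\Amp(K_X)$. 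Stability then feeds into the theory of admissible Hermitian--Einstein metrics of Bando--Siu \cite{BS} and Chen \cite{Chen} to produce a smooth $\omega_0$-Hermitian--Einstein metric $h$ on $\mathcal{E}|_{\Amp(K_X)\setminus\Sing(\mathcal{E})}$ with the finite-energy bound $\int|F_h|^2_{\omega_0}\,\omega_0^n<\infty$; this is precisely a $T$-Hermitian--Einstein metric for the Kähler current $T\in c_1(K_X)$ smooth on $\Amp(K_X)$.

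The core step is the Chern--Weil identity. On the smooth Kähler open manifold $\Amp(K_X)$ the Einstein equation $\sqrt{-1}\Lambda_{\omega_0}F_h=\lambda\,\Id$ yields, after the standard pointwise algebra, a constant $c_n>0$ with
$$\Delta(\pi^{[*]}\mathcal{E})\cdot(\pi^*K_X)^{n-2}=c_n\int_{\Amp(K_X)}\Bigl|F_h-\tfrac{1}{r}(\mathrm{tr}\,F_h)\,\Id\Bigr|^2_{\omega_0}\,\omega_0^n,$$
the integrand being the squared norm of the trace-free part of the curvature. The hard part will be justifying this identity across the boundary: the left-hand side is a cohomological intersection number on the \emph{compact} manifold $Y$, whereas the right-hand side is an $L^2$-integral over the \emph{noncompact} locus $\Amp(K_X)$. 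I would control the discrepancy by combining the finite-energy property of the admissible metric with the vanishing of all exceptional and singular contributions when paired with $(\pi^*K_X)^{n-2}$, supplied by Proposition \ref{nef big singular exceptional}; this is exactly the analytic input of \cite{Chen} and \cite{GKPT}, and it is the place where the nef-and-bigness of $K_X$ is essential.

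Finally, inserting the hypothesis $\Delta(\pi^{[*]}\mathcal{E})\cdot c_1(\pi^*K_X)^{n-2}=0$ into the identity forces the trace-free curvature $F_h-\tfrac{1}{r}(\mathrm{tr}\,F_h)\,\Id$ to vanish identically on $\Amp(K_X)\setminus\Sing(\mathcal{E})$, that is $F_h=\tfrac{1}{r}(\mathrm{tr}\,F_h)\,\Id$, which is projective flatness there. Since $\mathcal{E}$ is reflexive its non-locally-free locus $\Sing(\mathcal{E})$ has codimension at least two, so removing it does not change the fundamental group and the projective-unitary holonomy representation extends across $\Sing(\mathcal{E})\cap\Amp(K_X)$; thus $\pi^{[*]}\mathcal{E}$, hence $\mathcal{E}$ (as $\pi$ is an isomorphism over $\Amp(K_X)\subset X_{\reg}$), is projectively flat on all of $\Amp(K_X)$.
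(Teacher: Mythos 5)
Your overall strategy (hard direction of Kobayashi--Hitchin plus the L\"ubke/Chern--Weil identity for the discriminant) is the same circle of ideas the paper uses, but you have left the genuinely hard step unproved, and it is exactly the step the paper's argument is built to avoid. You propose to produce an $\omega_0$-Hermitian--Einstein metric directly on the noncompact locus $\Amp(K_X)\setminus\Sing(\mathcal{E})$, where $\omega_0$ is an incomplete K\"ahler metric degenerating along $E_{nK}(K_X)$ (which may contain divisors), by citing \cite{BS} and \cite{Chen}. Neither reference gives this: Bando--Siu and Chen construct admissible HE metrics on a \emph{compact} (normal) K\"ahler space with respect to a genuine K\"ahler class, with the bad set of codimension $\ge 2$. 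Likewise, your Chern--Weil identity equates a cohomological intersection number on the compact $Y$ with an $L^2$-integral over the noncompact ample locus; you acknowledge this is ``the hard part'' and then defer it to the same references, but their boundary estimates are again proved in the compact K\"ahler setting, not for a big class degenerate along a divisor. As written, both the existence of $h$ and the identity are assertions, not proofs.

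The missing idea is visible in the one hypothesis you never use: that $X$ has log canonical singularities. The paper invokes Fujino's base-point-free theorem \cite{Fujino16} to conclude that the nef and big $K_X$ is \emph{semiample}, giving a crepant birational morphism $\mu:X\to Z$ to a model with $K_Z$ ample and $K_X=\mu^*K_Z$. Stability descends to $\mu_{[*]}\mathcal{E}$ by Theorem \ref{stability bimero}, and then Chen's results are applied on the compact normal K\"ahler variety $Z$ with the honest K\"ahler class $c_1(K_Z)$: this yields the admissible HE metric, the inequality $\Delta(\pi^{[*]}\mathcal{E})c_1(\pi^*K_X)^{n-2}\ge\Delta(\mu_{[*]}\mathcal{E},h)\omega_Z^{n-2}\ge 0$, and the equality-case rigidity, all in a setting where the analysis is already established. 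Projective flatness is then pulled back to $\Amp(K_X)$ via Proposition \ref{semiample-bimero}, which identifies $E_{nK}(\pi^*K_X)$ with $\Exc(\pi\circ\mu)$. Without the reduction to $Z$ (or an independent solution of the HE equation for degenerate metrics, which is not available here), your argument does not close.
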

\begin{proof}
By the base point free theorem in \cite{Fujino16}, we know $K_X$ is semiample. Therefore there is a birational morphism $\mu:X\to Z$ to a normal projective variety $Z$ with $K_Z$ ample and $K_X=\pi^*K_Z$.
By Theorem \ref{stability bimero}, the reflexive sheaf $\mu_{[*]}\mathcal{E}$ is $c_1(K_X)^{n-1}$-stable.
Let $\pi:Y\to X$ be a resolution with $Y$ smooth as in the statement,  we obtain a birational morphism $\mu\circ\pi:Y\to Z$ and $\pi^*K_X=(\mu\circ\pi)^*K_Z$.
\begin{center}
\begin{tikzpicture}[auto]
\node (y) at (0,1.5) {$Y$}; \node (x) at (0,0) {$X$}; \node (z) at (1.5,0) {$Z$};
\draw[->] (y) to node[swap] {$\pi$} (x); \draw[->] (y) to node {$\pi\circ\mu$} (z);
\draw[->] (x) to node {$\mu$} (z);
\end{tikzpicture}
\end{center} 
By \cite{Chen}, we obtain 
$$
\Delta(\pi^{[*]}\mathcal{E})c_1(\pi^*K_X)^{n-2}\ge\Delta(\mu_{[*]}\mathcal{E},h)\omega_Z^{n-2}\ge 0,
$$
where $\omega_Z$ is a K\"{a}hler metric in $c_1(K_Z)$ and $h$ is the admissible $\omega_Z$-HE metric in $\mu_{[*]}\mathcal{E}$. Since $\Delta(\pi^{[*]}\mathcal{E})c_1(\pi^*K_X)^{n-2}=0$ by assumption of this theorem, we have $\Delta(\mu_{[*]}\mathcal{E},h)\omega_Z^{n-2}= 0$. Hence $\mu_{[*]}\mathcal{E}$ is projectively flat on $Z_{\reg}$ by \cite{Chen}. Therefore, together with Proposition \ref{semiample-bimero}, we obtain that $\pi^{[*]}\mathcal{E}$ is projectively flat away from $\Exc(\pi\circ\mu)=E_{nK}(\pi^*K_X)$. Therefore we obtain that $\mathcal{E}$ is projectively flat on $\Amp(K_X)$.
\end{proof}

Fillip-Tosatti \cite{FT18} showed that any nef and big class on K3 K\"{a}hler surface is semiample (see Definition \ref{positive class}, Proposition \ref{semiample-bimero}). Thus we obtain the following complete result:
\begin{corr}
    Let $X$ be a K3 K\"{a}hler surface and $\alpha$ be a nef and big class on $X$. Suppose an $\alpha$-slope stable vector bundle ${E}$ on $X$ satisfies the Bogomolov-Gieseker equality: 
    $$
    \Delta({E})=2rc_2({E})-(r-1)c_1({E})^2=0.
    $$
    Then ${E}$ is projectively flat on $\Amp(\alpha)$.
\end{corr}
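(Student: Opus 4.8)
The plan is to contract the null locus of $\alpha$ and reduce everything to Xuemiao Chen's results \cite{Chen} on the resulting singular K3 surface, paralleling the proof of Theorem \ref{BG equality} with Fillip--Tosatti's semiampleness \cite{FT18} playing the role that the base-point-free theorem plays there. First I would observe that, since $\alpha$ is nef and big on the K3 K\"{a}hler surface $X$, it is semiample by \cite{FT18}. Hence by Proposition \ref{semiample-bimero} there is a bimeromorphic morphism $\pi : X \to Y$ onto a compact normal K\"{a}hler space $Y$ equipped with a K\"{a}hler class $\omega \in H^{1,1}_{BC}(Y,\mathbb{R})$ satisfying $\alpha = \pi^{*}\omega$, and moreover $\Amp(\alpha) = X \setminus \Exc(\pi)$ with $\pi$ restricting to an isomorphism $\Amp(\alpha) \xrightarrow{\sim} Y_{\reg}$ (here $Y$ is a normal K\"{a}hler K3 surface and $Y_{\sing}$ is the image of the contracted curves).

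Next I would push the bundle down and apply Chen's Kobayashi--Hitchin correspondence. Because $\alpha = \pi^{*}\omega$ is nef and big, we have $\langle \alpha^{n-1}\rangle = \alpha^{n-1}$ (Proposition \ref{nef positive product}) and $\alpha^{n-1}\cdot [D] = 0$ for every $\pi$-exceptional divisor $D$ (Proposition \ref{nef big singular exceptional}); this is exactly what lets the degree computation of Claim \ref{stability normal lemma claim} run without Assumption \ref{assumption}. Following the template of Theorem \ref{BG equality}, the reflexive push-forward $\pi_{[*]}E$ is therefore $\omega^{n-1}$-slope stable on $Y$ (Theorem \ref{stability bimero}), so Chen's correspondence on compact normal K\"{a}hler spaces supplies an admissible $\omega$-Hermitian-Einstein metric $h$ in $\pi_{[*]}E$.

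Then I would transfer the Bogomolov--Gieseker equality across the contraction. As in the proof of Theorem \ref{BG equality}, Chen's inequality gives
$$
\Delta(E)\cdot (\pi^{*}\omega)^{n-2} \;\geq\; \Delta(\pi_{[*]}E, h)\,\omega^{n-2} \;\geq\; 0.
$$
Since $X$ is a surface we have $n-2 = 0$, so the left-hand side is simply the number $\Delta(E) = 2rc_{2}(E) - (r-1)c_{1}(E)^{2}$, which vanishes by hypothesis. Hence $\Delta(\pi_{[*]}E, h)\,\omega^{n-2} = 0$, and Chen's characterization of the equality case \cite{Chen} forces $\pi_{[*]}E$ to be projectively flat on $Y_{\reg}$. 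Finally, since $E \simeq \pi^{[*]}\pi_{[*]}E$ on $\Amp(\alpha)$ and $\pi$ is an isomorphism from $\Amp(\alpha)$ onto $Y_{\reg}$, projective flatness pulls back and yields that $E$ is projectively flat on $\Amp(\alpha)$, as desired.

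The step I expect to be the main obstacle is the second one: justifying that $\omega^{n-1}$-stability of $\pi_{[*]}E$ on the singular base follows from $\alpha$-stability of $E$ when $\pi$ is a \emph{crepant} (zero-discrepancy) contraction, so that the $\beta$-negativity hypothesis of Theorem \ref{stability bimero} degenerates to a trivial exceptional divisor. The point to check carefully is that the degree-preservation argument of Claim \ref{stability normal lemma claim} only ever uses the orthogonality $\alpha^{n-1}\cdot [D] = 0$ on $\pi$-exceptional divisors, which holds here by Proposition \ref{nef big singular exceptional} precisely because $\alpha$ is nef and big; thus slope is genuinely preserved under $\pi$ even though no discrepancy divisor is present, and stability transfers as claimed.
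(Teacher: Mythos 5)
Your proposal is correct and follows essentially the same route the paper intends: Fillip--Tosatti's semiampleness of nef and big classes on K\"{a}hler K3 surfaces replaces the base-point-free theorem in the proof of Theorem \ref{BG equality}, and the rest (contraction via Proposition \ref{semiample-bimero}, transfer of stability, Chen's equality case, pulling back projective flatness) is the same argument. Your closing worry is harmless: the exceptional divisor in the $\alpha$-negativity datum is simply $E=0$ because $\alpha=\pi^{*}\omega$ exactly, and the degree computation only needs $\alpha\cdot[C]=0$ for contracted curves $C$, which holds by Theorem \ref{nonKahler-null}.
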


%\bibliographystyle{plain}
%\bibliography{reference.bib}

%\begin{comment}

%\end{comment}

\end{document}